\newtheorem{theorem}{Theorem}
\newtheorem{lemma}{Lemma}
\newtheorem{definition}{Definition}
\newtheorem{proof}{Proof}
\DeclareMathAlphabet{\pazocal}{OMS}{zplm}{m}{n}
\def\BState{\State\hskip-\ALG@thistlm}
\newcommand{\lJump}{[\![}
\newcommand{\rJump}{]\!]}
\newcommand{\el}{\boldsymbol{e}_l}
\newcommand{\er}{\boldsymbol{e}_r}
\newtheorem{remark}{Remark}
\begin{document}

\title{A conservative and energy stable discontinuous  spectral element method for the shifted wave equation in second order form}
\author{Kenneth Duru\thanks{Corresponding author. Mathematical Sciences Institute, The Australian National University, Canberra, Australia.} \and Siyang Wang \thanks{Department of Mathematics and Mathematical Statistics, Ume{\aa} University, Ume\aa,  Sweden.} \and Kenny Wiratama\thanks{Mathematical Sciences Institute, The Australian National University, Canberra, Australia.} }
\pagenumbering{arabic}
\maketitle
\begin{abstract}
In this paper, we develop a provably energy stable and conservative  discontinuous  spectral element method  for the shifted wave equation in second order form.  
The proposed method combines the advantages and central ideas of  very successful numerical techniques, the summation-by-parts  finite difference method, the spectral method and the discontinuous Galerkin method.
We prove energy-stability, discrete conservation principle, and derive error estimates in the energy norm for the (1+1)-dimensions shifted wave equation in second order form. The energy-stability results, discrete conservation principle, and the error estimates generalise to multiple dimensions using tensor products of quadrilateral and hexahedral  elements. Numerical experiments, in (1+1)-dimensions  and (2+1)-dimensions, verify the theoretical results  and demonstrate optimal convergence of $L^2$ numerical errors at subsonic, sonic and supersonic regimes.
\end{abstract}
\textit{Keyword:}
shifted wave equation, Einstein's equations, second order hyperbolic PDE,  spectral element method,  stability,    constraint preserving 

\section{Introduction}
Second order  systems of hyperbolic partial differential equations (PDEs) often describe problems where wave phenomena are dominant. Typical examples are the acoustic wave equation, the elastic wave equation, and Einstein's equations of general relativity.  However, many solvers for wave equations and  Einstein's equations are designed for first order systems of hyperbolic PDEs \cite{DumbserFambriGaburro2019,DavidBrown2012}. In particular, multi-domain spectral methods which are increasingly becoming attractive because they are optimal in terms of  efficiency and accuracy, and are commonly  implemented  as first-order systems \cite{Boyle_et_al_2007, Lindblom_et_al_2006}. That is, the system of  second order hyperbolic PDEs are first reduced to a system of first order hyperbolic PDEs  before numerical approximations are introduced. The main reason is that the theory and numerical methods to solve  hyperbolic PDEs are well developed for first order hyperbolic systems, and less developed for second order hyperbolic systems. 

There are disadvantages of solving the equations in first order form which can be avoided if the equations are solved in second order form. These include the introduction of (non-physical) auxiliary variables with their constraints and boundary conditions.  For example, in the harmonic description of general relativity, Einstein's equations are a system of  10 curved space second order wave equations, while the corresponding reduction to first order systems will involve around 60 equations. The reduction to first order form is also less attractive from a computational point of view considering the efficiency and accuracy of numerical approximation. 

The main motivation of this work is the development of  \\
$\bullet$ efficient (explicit in time, no auxiliary variables, and  no matrix inversion), \\
$\bullet$ robust (provably stable),  \\
$\bullet$  conservative (constraints preserving) and  \\
$\bullet$  arbitrarily (spectrally)  accurate \\
discontinuous  spectral element methods (DSEM) for  Einstein's equations of general relativity in second order form, without the introduction of auxiliary variables.

It has been a long held ambition of the computational relativity community  to develop the theory, and robust numerical techniques for second order hyperbolic systems such that Einstein's equations can be solved efficiently \cite{HeinzOrtiz2002, OscarSarbach2011,TaylorKidderTeukolky2010}. This has proven to be an incredibly difficult task. Comparing with the standard wave equations in classical mechanics, eg. the acoustic wave equation, the elastic wave equation, etc,  Einstein's equations of general relativity with the space-time metric often results in a non-vanishing shift  and complicates the derivation of well-posed and energy-stable boundary and interface conditions for the continuous problem \cite{Fournodavlos2019OnTI, OscarSarbach2011}, leading to significant challenges in constructing provably stable and high order accurate numerical methods.  In particular, for very high order methods it is more difficult to guarantee stability for naturally second order systems than their corresponding first order forms \cite{Tichy2006,Tichy2009,Fieldetal2010,DavidBrown2012}, see the  progress in this direction  \cite{HeinzOrtiz2002, OscarSarbach2011, MattssonParisi2010,TaylorKidderTeukolky2010}.

In this paper, we take a first but an important step towards designing  provably stable and very high order accurate DSEM for Einstein's equations of general relativity in second order form. In particular, we will consider  the spatial numerical approximations and  focus on accurate and stable interface and boundary treatments. We consider the shifted wave equation in second order form, in one (and two) space dimension, as the suitable model problem which embodies most of the numerical challenges for Einstein's equations of general relativity and minimises technical difficulties. The shifted wave equation is also a prototype for problems in aero-acoustics, where the shift emanates from linearising  Euler equations of compressible fluid dynamics with non-vanishing mean flow. We will consider all flow regimes, namely subsonic,  sonic, and supersonic flow regimes. Similar to Einstein's equations of general relativity, comparing with the classic scalar wave equation, the presence of non-vanishing shift in the shifted wave equation imposes  difficulties to derive well-posed and energy-stable boundary and interface conditions for the continuous problem, leading to significant challenges in constructing robust and efficient numerical methods for all well-posed medium parameters of the shifted wave equation in second order form .

Summation by parts (SBP) finite difference (FD) methods \cite{MattssonNordstrom2004, Mattsson2012} have been developed to solve the wave equation in second order form \cite{Mattsson2008,Mattsson2009}.
Using the so-called compatible SBP FD operators, these methods have been extended to the elastic wave equation in heterogeneous media and complex geometries  \cite{Sjogreen2012,DuruKreissandMattsson2014,DuruVirta2014,Zhang2021}. However, the extension of the SBP FD methods to the  shifted wave equation generates high frequency exponentially growing numerical modes  \cite{MattssonParisi2010,SzilagyiKreissWinicour2005}, which will require artificial numerical dissipation to numerically stabilise the methods.  Artificial numerical dissipation can help in many ways but it can introduce some unwanted numerical artefacts.

DG methods have been developed for classical second order hyperbolic PDEs, such as the symmetric interior penalty discontinuous Galerkin method (SIPDG) for the wave equation \cite{GroteSchneebeliSchotzau2006}. This method is high order accurate, and is geometrically flexible by using unstructured grids. However, it is not straightforward to extend this SIPDG formulation to solve Einstein's equations for gravitational waves, or even its simplified model problem, the shifted wave equation. As above, the main difficulties arise from the presence of the shift, and mixed temporal and spatial derivatives, which make a straightforward application of the SIPDG numerical flux \cite{GroteSchneebeliSchotzau2006} impossible.  The energy-based DG method developed in \cite{Appelo2015} has recently been extended to the shifted wave equation \cite{Zhang2019}.

In this paper, we begin the development of a  robust and arbitrarily accurate multi-domain spectral  method for Einstein's equations of general relativity, with the shifted wave equation as our model problem. We will combine the advantages and central ideas of  three very successful numerical techniques, SBP FD methods, spectral methods  and DG methods. We will  introduce a strict compatibility condition (that we call the ultra-compatible SBP property)   for SBP operators that will enable stable and accurate  numerical treatment of well-posed second order hyperbolic problems with mixed temporal and spatial derivatives. The SBP operators will be derived using a Galerkin spectral approach that is common to DG methods. Then we will design conservative (constraint preserving)  and accurate numerical fluxes to couple locally adjacent spectral elements. We prove numerical stability for the method and derive a priori error estimates. Numerical experiments are presented in (1+1)-dimensions  and (2+1)-dimensions to verify the theoretical results.  The numerical solution is integrated in time using the 4th order accurate classic Runge-Kutta method. The numerical results corroborate the theory   and demonstrate optimal convergence of $L^2$ numerical errors at subsonic, sonic and supersonic flow regimes.

The remaining parts of the paper are organised as follows. In the next section, we introduce the notion of ultra-compatible SBP property for discrete operators approximating the first derivative operator $\partial /\partial x$ and the second derivative operator $\partial /\partial x\left(b  \partial /\partial x\right)$ with variable coefficient. Furthermore, we derive ultra-compatible spectral difference SBP operators and prove their accuracy. In section 3, we introduce the shifted wave equation, derive well-posed interface and boundary conditions, derive the conservation principle and prove that the model problem is well-posed for all possible parameters. In section 4, we present the spatial discretisation and derive energy estimates to prove stability. Numerical error estimates are derived in section 5. In section 6, we present numerical experiments. The numerical experiments corroborate the theoretical results. In section 7, we draw conclusions and suggest directions for future work.
 
\section{SBP spectral difference operators}
In this section, we first introduce the notion of ultra-compatible SBP property for discrete operators approximating the first derivative operator $\partial /\partial x$ and the second derivative operator $\partial /\partial x\left(b  \partial /\partial x\right)$ on a finite number of grid points in a closed interval, where the smooth function $b$ is always positive and describes the material property in the physical model. We will then construct the ultra-compatible SBP spectral difference operators and derive their global accuracy.

To begin, for real functions $u$ and $v$, we define the weighted $L^2$-inner product and the corresponding norm
\begin{align}\label{eq:l2norm}
\left(u, v\right)_{b\Omega} = \int_{\Omega} b(x)u(x)v(x) dx, \quad \|u\|^2_{b\Omega}= \left(u, u\right)_{b\Omega}, \quad b(x) > 0, \quad \forall x\in \Omega.
\end{align}
If $b(x) = 1$ we omit the subscript $b$, and we get the standard $L^2$-inner product $\left(u, v\right)_{\Omega}$ and the corresponding norm $\|u\|_{\Omega}$. 
We will also omit the subscript $\Omega$ when the context is clear. 
The equivalence of norms holds
\begin{align}\label{eq:l2norm_equivalence}
C_1\|u\|_{\Omega} \le \|u\|_{b\Omega}\le C_2\|u\|_{\Omega}, \quad C_1, C_2 > 0,
\end{align}
where
$$
C_1 = \min_{x}\sqrt{b(x)} >0, \quad C_2 = \max_{x}\sqrt{b(x)} >0.
$$
Consider any smooth function $u \in H^{m}(\Omega)$ for $m \ge 2$ in the interval $x\in \Omega = [B_{-}, B_{+}]$, integration-by-parts gives
\begin{align}\label{eq:IBP_dx}
\left(\frac{\partial u}{\partial x}, u\right)_{\Omega} = -\left(u, \frac{\partial u}{\partial x}\right)_{\Omega} + u^2\Big|_{B},
\end{align}
\begin{align}\label{eq:IBP_dxx}
\left(\frac{\partial }{\partial x}\left(b\frac{\partial u}{\partial x}\right), u\right)_{\Omega} = -\left(\frac{\partial u}{\partial x}, \frac{\partial u}{\partial x}\right)_{b\Omega} + \left(b\frac{\partial u}{\partial x}\right)u\Big|_{B},
\end{align}
  where the subscript $B$ for a function $u$ denotes the evaluation of the function at the boundaries as follows,
\begin{align}\label{eq:BD_continuous}
u\Big|_B = u(B_{+}) - u(B_{-}).
\end{align}
Discrete operators approximating the first derivative $\partial /\partial x$ and the second derivative $\partial /\partial x\left(b \partial /\partial x\right)$ on a finite number of grid points in a closed interval are called SBP operators  \cite{BStrand1994, MattssonNordstrom2004, Mattsson2012} if they mimic the integration-by-parts properties \eqref{eq:IBP_dx}--\eqref{eq:IBP_dxx} in a discrete inner product. We present the formal definitions in the following section. 
 
 \subsection{SBP operators}
  To be precise, we introduce $P+1$ grid points, $B_{-}=x_1<x_2<x_3 \cdots <x_{P+1}=B_{+}$, and let $D_x\approx {\partial }/{\partial x}$ and $D_{xx}^{(b)} \approx {\partial }/{\partial x}\left(b(x){\partial }/{\partial x}\right)$ denote the discrete operators approximating the derivatives on the grid.
 \begin{definition}\label{Def:SBP_Operator}
  The discrete derivative operators  $D_x \in \mathbb{R}^{(P+1)\times(P+1)}$ and $D_{xx}^{(b)} \in \mathbb{R}^{(P+1)\times(P+1)}$ are called {\it SBP operators} if for all $\mathbf{u} \in \mathbb{R}^{P+1}$,
  \begin{align*}
  &D_x = H_x^{-1} Q, \quad Q + Q^T = B, \quad H_x=H_x^T, \quad \mathbf{u}^T {H}_x \mathbf{u}>0,\ \forall \mathbf{u}\neq 0,\quad \mathbf{u}^TB\mathbf{u} = u_{P+1}^2 - u_{1}^2,\\
  &D_{xx}^{(b)} = H_x^{-1}\left(- M_x^{(b)} + B {b} S_x\right), \quad \mathbf{u}^TM_x^{(b)}\mathbf{u} \ge 0,\ \forall \mathbf{u}, \quad \mathbf{u}^TB \left({b} S_x\right)\mathbf{u} = u_{P+1}\left({b} S_x\mathbf{u}\right)_{P+1} - u_{1}\left({b} S_x\mathbf{u}\right)_{1},
  \end{align*}
  where $S_x\approx {\partial }/{\partial x}$ at the boundaries.
  \end{definition}
Note that $H_x$ defines a discrete inner product and norm through 
\begin{align}\label{eq:disc_scalar_product_H_x}
\langle\mathbf{u} , \mathbf{v} \rangle_{{H}_x} = \mathbf{v}^T {H}_x \mathbf{u} , \quad \|\mathbf{u} \|_{{H}_x }^2 = \langle\mathbf{u} , \mathbf{u} \rangle_{{H}_x} > 0, \quad \forall \mathbf{u}  \ne 0.
\end{align}
From Definition \ref{Def:SBP_Operator}, we have
\begin{align}\label{eq:SBP_dx}
\langle D_x\mathbf{u}, \mathbf{u} \rangle_{{H}_x}  = -\langle \mathbf{u} , D_x\mathbf{u} \rangle_{{H}_x}  + \underbrace{\mathbf{u}^TB\mathbf{u}}_{u_{P+1}^2 - u_{1}^2} ,
\end{align}
\begin{align}\label{eq:SBP_dxx}
\langle D_{xx}^{(b)}\mathbf{u} , \mathbf{u} \rangle_{{H}_x}   = -\mathbf{u}^TM_x^{(b)}\mathbf{u}  + \underbrace{\mathbf{u}^TB\left( {b} S_x\mathbf{u}\right)}_{u_{P+1}\left({b} S_x\mathbf{u}\right)_{P+1} - u_{1}\left({b} S_x\mathbf{u}\right)_{1}}.
\end{align}
  Note the close similarities between the continuous integration-by-parts properties \eqref{eq:IBP_dx}--\eqref{eq:IBP_dxx} and their discrete analogue \eqref{eq:SBP_dx}--\eqref{eq:SBP_dxx}.
   \begin{remark} 
Traditional finite difference SBP operators \cite{BStrand1994, MattssonNordstrom2004, Mattsson2012} are designed with central difference operators of  even order ($2r$-th, $r = 1, 2, ...$) accuracy  in the interior and lower order ($r$-th) accurate  one-sided  operator close to the boundaries.
\end{remark}
When solving problems with both the first and and second derivatives present, certain compatibility conditions between $D_x$ and $D_{xx}^{(b)}$ are important to derive a stable discretization. Following \cite{DuruVirta2014, MattssonParisi2010}, we introduce the definition of fully compatible SBP operators. 
 \begin{definition}\label{Def:Compatible_SBP_Operator}
  Let $D_x$ and $D_{xx}^{(b)}$  denote SBP operators approximating ${\partial }/{\partial x}$ and ${\partial }/{\partial x}\left(b(x){\partial }/{\partial x}\right)$, respectively. The operators are called {\it fully compatible SBP operators} if
  \begin{align*}
  D_{xx}^{(b)} = H_x^{-1}\left(- M_x^{(b)} + B D_x\right),
  \end{align*}
  where
  \begin{align*}
  M_x^{(b)} = D_x^T H bD_x + R_x^{(b)}, \quad   \mathbf{u}^T R_x^{(b)}\mathbf{u} \ge 0,\ \forall \mathbf{u}.
  \end{align*}
  \end{definition}
     Fully compatible SBP operators enable the design of accurate and provably stable multi-block numerical approximation for problems involving mixed spatial derivatives such as the acoustic and elastic wave equations in complex geometries  \cite{VirtaMattsson2014, DuruVirta2014, DuruKreissandMattsson2014}. For these models with mixed spatial derivatives, the remainder operator $R_x^{(b)} $ enhances numerical accuracy and eliminates poisonous spurious numerical high frequency modes. However, for problems such as the shifted wave equation, where mixed spatial and temporal derivatives are present, the fully compatible properties of first and second derivative operators are not sufficient to guarantee numerical stability for all well-posed coefficients. In fact the opposite is the case, as the remainder operator $R_x^{(b)}$ generates high frequency exponentially growing numerical modes which can destroy the accuracy of the numerical solution \cite{MattssonParisi2010,SzilagyiKreissWinicour2005}. For these problems, artificial numerical dissipation designed to eliminate the unstable numerical mode is necessary,  see for example \cite{MattssonParisi2010,SzilagyiKreissWinicour2005}. Artificial numerical dissipation helps in many ways but it can introduce some unwanted numerical artefacts that are not present in the continuous model. 
  
  We will now introduce a strict compatibility condition that will enable accurate and stable numerical treatment of well-posed second order hyperbolic initial-boundary-value-problems (IBVPs) with mixed temporal and spatial derivatives.
  \begin{definition}
Let $D_x$and $D_{xx}^{(b)}$  denote SBP operators approximating ${\partial }/{\partial x}$, ${\partial }/{\partial x}\left(b(x){\partial }/{\partial x}\right)$. The operators are called {\bf ultra-compatible SBP operators} if they are fully compatible and $R_x^{(b)} \equiv 0$.
  \end{definition}
  A straightforward approach to  derive ultra-compatible SBP operators is to use the first derivative twice in order to construct the second derivative SBP operator. However, for traditional SBP finite difference operators this approach leads to one order loss of accuracy near boundaries and destroys the accuracy of the solutions. In the present study, we will use a spectral approach to derive ultra-compatible SBP operators with full accuracy.
  
\subsection{Spectral difference operators}
 Our desired goal is to construct a multiple element approximation of the shifted wave equation with spectral accuracy. 
To begin, we  discretise the domain $x \in \Omega = [B_{-}, B_{+}]$ into $K$ elements denoting the $k$-th element by $\Omega_k = [x_k, x_{k+1}]$, where $k = 1, 2, \dots, K$, with $x_1 = B_{-}$ and $x_{K+1} = B_{+}$.
Next, we map the element $\Omega_k$ to a reference element $\xi \in \widetilde{\Omega} =  [-1, 1]$ by the linear transformation 
\begin{align}\label{eq:transf_0}
x(\xi) = x_k + \frac{\Delta{x}_k}{2}\left(1 + \xi \right), \quad \Delta{x}_k = x_{k+1} - x_k, \quad \xi \in \widetilde{\Omega} =  [-1, 1].
\end{align}
In the reference element $\widetilde{\Omega} =  [-1, 1]$, the $L^2$-scalar product and norm are given by
\begin{align}\label{eq:l2norm_ref}
 \left(u, v\right)= \int_{-1}^{1} uvd\xi, \quad \|u\|^2 = \left(u, u\right).
\end{align}
Note that for the reference element $\widetilde{\Omega}$, we have omitted the subscript $\widetilde{\Omega}$ in the scalar product.

In order to derive a discrete approximation of the derivative, we will use a spectral approach. 
Let $\mathbb{P}^{P}$ denote the space of polynomials of degree at most $P$.
Now consider the polynomial approximation of $u(\xi)$
\begin{align}\label{eq:polynomial_approximation}
u(\xi) \approx U(\xi) = \sum_{j = 1}^{P+1} \phi_{j}(\xi) U_j, \quad \phi_{j}(\xi) \in \mathbb{P}^{P},
\end{align}
where $U_j$ are degrees of freedom to be determined and $\phi_{j}(\xi) $ are the polynomial basis spanning $\mathbb{P}^{P}$.
For an arbitrary test function $\phi \in \mathbb{P}^{P}$, consider the weak derivative of $U(\xi)$ defined by
\begin{align}\label{eq:weak_derivative}
\left( \phi(\xi), \frac{d U(\xi)}{d \xi}\right) = \sum_{j = 1}^{P+1}U_j\left( \phi(\xi),   \phi_{j}^{\prime}(\xi) \right).
\end{align}
We consider nodal polynomial basis $\phi_i(\xi)$, and in particular Lagrange polynomials of degree $P$  with 
\begin{equation*}
\phi_i(\xi_j) = \begin{cases}
1, \quad i=j,\\
0, \quad i\neq j,
\end{cases}
\end{equation*}
where $\xi_j$ is the $j$-th node of a Gauss-type quadrature rule and
$U_j = u(\xi_j)$. Next, we perform the classical Galerkin approximation by choosing the test functions as the basis functions $\phi(\xi) = \phi_i(\xi)\in \mathbb{P}^{P}$, $i = 1, 2, \cdots P+1$, and
we choose a quadrature rule that is exact  for all polynomial integrand of degree at most $2P-1$,
\begin{align}
\left( \phi_i(\xi), \frac{d U(\xi)}{d \xi}\right) = \sum_{j = 1}^{P+1}U_j\left( \phi_i(\xi),   \phi_{j}^{\prime}(\xi) \right) = \sum_{j = 1}^{P+1} Q_{ij}U_j = Q \mathbf{U}, \quad 
\mathbf{U} 
=
\begin{pmatrix}
U_1\\
U_2\\
\vdots\\
U_{P+1}
\end{pmatrix},
\end{align}
where
\begin{equation}\label{eq:stiffnes_matrix}
 Q_{ij} = \sum_{m = 1}^{P+1} w_m  \phi_i(\xi_m)   \phi_j^{\prime} (\xi_m) = \left( \phi_i(\xi),   \phi_{j}^{\prime}(\xi) \right) = - \left(\phi_j(\xi), \phi_{i}^{\prime}(\xi)  \right)+ \phi_i(1)  \phi_j(1) -  \phi_i(-1)  \phi_j(-1) .
\end{equation}
Here, $\xi_j$ are nodes of a Gauss-type quadrature  with weights $w_j>0$ for $j=1,2,\cdots,P+1$.
Note that 
$$
 Q_{ij} +  Q_{ij}^T = B_{ij}, \quad Q_{ij}^T =  \left(\phi_j(\xi), \phi_{i}^{\prime}(\xi)  \right), \quad B_{ij} = \phi_i(1)  \phi_j(1) -  \phi_i(-1)  \phi_j(-1) ,
$$
and
\begin{align}\label{eq:derivative}
\begin{pmatrix}
\frac{d u(\xi_1)}{d \xi}\\
\frac{d u(\xi_2)}{d \xi}\\
\vdots\\
\frac{d u(\xi_{P+1})}{d \xi}
\end{pmatrix}
\approx
\begin{pmatrix}
\frac{d U(\xi_1)}{d \xi}\\
\frac{d U(\xi_2)}{d \xi}\\
\vdots\\
\frac{d U(\xi_{P+1})}{d \xi}
\end{pmatrix}
= 
D\mathbf{U},
\quad
D = W^{-1}Q, \quad W =   \mathrm{diag}\left([w_1, w_2, \cdots, w_{P+1}]\right).
\end{align}

 We consider the Gauss-Legendre-Lobatto (GLL) quadrature  which is  exact for polynomials up to degree $2P-1$.  We note that GLL nodes include both endpoints $\xi_1 = -1$, $\xi_{P+1} = 1$, with $B = \mathrm{diag}\left([-1, 0, 0,\cdots 0, 1]\right)$, and will not  require projections/interpolations to evaluate numerical fluxes when imposing interface and boundary conditions.
As in \eqref{eq:disc_scalar_product_H_x}, we note that $W$ defines a discrete inner product and norm
\begin{align}\label{eq:disc_scalar_product_H}
\langle\mathbf{u} , \mathbf{v} \rangle_{{W}} = \mathbf{v}^T {W} \mathbf{u} , \quad \|\mathbf{u} \|_{{W} }^2 = \langle\mathbf{u} , \mathbf{u} \rangle_{{W}} > 0, \quad \forall \mathbf{u}  \ne 0.
\end{align}
The discrete norm $\|\mathbf{u} \|_{{W} }$ is equivalent to the continuous norm $\|{u}\|$ (see \cite{Canuto_eta2006}, after (5.3.2)),  that is for all ${u}\in \mathbb{P}^P$ we have
\begin{align}\label{eq:norm_equiv}
\|{u}\| \le \|\mathbf{u}\|_W \le \sqrt{3}\|{u}\|.
\end{align}
The following Lemma establishes the accuracy of the spectral difference operator $D = H^{-1}Q$.
\begin{lemma}
Consider the spectral difference operator $D = W^{-1}Q$ in \eqref{eq:derivative} in a reference element $\widetilde{\Omega}=[-1, 1]$ . 
The norm of the approximation error converges to zero spectrally
fast as \cite{Canuto_eta2006}  (5.4.33)--(5.4.34).
More precisely, 
\begin{equation}\label{err_D}
\left\|D^l\mathbf{U} -\frac{d^lu}{d\xi^l}\right\| \le C P^{l-m}  |u|_{H^{m;P}\left(\widetilde{\Omega}\right)}, \quad |u|_{H^{m;P}\left(\widetilde{\Omega}\right)}^2  = \sum_{n = \min\left(m, P+1\right)}^{m} \left\| \frac{d^n u}{d \xi^n}\right\|^2,
\end{equation}
where $l \ge 0$ and $m\ge l$.
\end{lemma}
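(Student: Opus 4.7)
The plan is to reduce the stated spectral bound to the classical Legendre--GLL interpolation error estimate cited from Canuto et al.\ (5.4.33)--(5.4.34). First, I would identify $D = W^{-1}Q$ with the standard GLL collocation differentiation matrix. Using the Kronecker property $\phi_i(\xi_m) = \delta_{im}$ of the nodal Lagrange basis directly in \eqref{eq:stiffnes_matrix} gives $Q_{ij} = w_i\phi_j'(\xi_i)$, so $D_{ij} = \phi_j'(\xi_i)$. Consequently, if $I_P u \in \mathbb{P}^{P}$ denotes the nodal Lagrange interpolant of $u$ at the GLL nodes, then $\mathbf{U}$ is the coefficient vector of $I_P u$ and $D\mathbf{U}$ is exactly the vector of nodal values of $(I_P u)'$.

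Next, I would iterate this identification. Since $(I_P u)^{(l)} \in \mathbb{P}^{P-l} \subset \mathbb{P}^{P}$ for $0 \leq l \leq P$, this polynomial is fixed by GLL interpolation, so applying $D$ to its nodal vector yields the nodal vector of $(I_P u)^{(l+1)}$. By induction, $D^l \mathbf{U}$ is the nodal vector of $(I_P u)^{(l)}$ and therefore uniquely determines this polynomial in $\mathbb{P}^{P}$. Under this identification, $\|D^l\mathbf{U} - d^l u/d\xi^l\|$ coincides with $\|(I_P u)^{(l)} - u^{(l)}\|$ on $\widetilde{\Omega}$. The edge case $l > P$ is handled trivially, since then $(I_P u)^{(l)} \equiv 0$ and the bound reduces to a statement about $\|u^{(l)}\|$ that follows from the truncated seminorm definition.

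Finally, the classical Legendre--GLL interpolation estimate applied to $I_P u$ and its derivatives directly yields the bound $C P^{l-m} |u|_{H^{m;P}(\widetilde{\Omega})}$ with the seminorm as defined in the statement. The main obstacle, in my view, is not the interpolation estimate itself (which is cited), but rather the clean identification of the vector $D^l\mathbf{U}$ with a polynomial and the careful passage between the discrete $W$-norm on nodal coefficients and the continuous $L^2$-norm used in the statement; this is facilitated by the norm equivalence \eqref{eq:norm_equiv}. Once this bookkeeping is in place, the proof is essentially a one-line reduction to the standard spectral approximation theory.
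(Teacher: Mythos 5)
Your proposal is correct and follows essentially the same route as the paper: the paper offers no argument beyond the citation to Canuto et al.\ (5.4.33)--(5.4.34), and your reduction --- using $\phi_i(\xi_m)=\delta_{im}$ to get $D_{ij}=\phi_j'(\xi_i)$, so that $D^l\mathbf{U}$ is the nodal vector of $(I_P u)^{(l)}$ and the left-hand side of \eqref{err_D} becomes $\|(I_Pu)^{(l)}-u^{(l)}\|$ --- is exactly the bookkeeping implicit in that citation. The only caveat, which you inherit from the paper rather than introduce, is that the cited estimates cover the $L^2$ and $H^1$ interpolation errors, so the case $l\ge 2$ still rests on the reference (or an additional inverse-inequality argument) rather than on anything proved here.
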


 \subsection{Spectral difference SBP operators}
 In a physical element, as opposed to a reference element, introduce the matrices 
 $ H_x, D_x \in \mathbb{R}^{(P+1)\times(P+1)}$  defined by
\begin{equation}\label{sdoD}
D_x = H_x^{-1} Q \approx \frac{\partial}{\partial x}, \quad H_x = \frac{\Delta{x}}{2} W,
\end{equation}
where $Q$ is elemental stiffness matrix defined in \eqref{eq:stiffnes_matrix} and $H$ is the diagonal matrix containing the quadrature weights $w_j >0$. The discrete first derivative operator $D_x$  is a  spectral difference approximation of the first derivative in one space dimension and satisfies the SBP property, that is 
\begin{equation}\label{SBP_1st}
Q + Q^T = B = \mathrm{diag}\left([-1, 0, 0,\cdots 0, 1]\right), \quad  H_x = H_x^T > 0.
\end{equation}

We derive an approximation of the second derivative by using the first derivative twice, that is 
\begin{equation}\label{sdoDD}
D_{xx}^{(b)}= D_x\mathbf{b}D_x \approx \frac{\partial }{\partial x}\left(b(x)\frac{\partial }{\partial x}\right),
\end{equation}
where $\mathbf{b}$ is a diagonal matrix with elements $b_{ii}:=b(x_i)$.
Note that
\begin{equation}\label{SBP_2nd}
D_{xx}^{(b)} = H_x^{-1} \left( -D_x^TH_x\mathbf{b}D_x + B \mathbf{b}D_x\right).
\end{equation}
We will now make the discussion more formal. 
\begin{lemma}\label{lem:SBP_Ultra}
Let $D_x$ and $D_{xx}^{(b)}$ denote the discrete approximations of ${\partial }/{\partial x}$ and ${\partial }/{\partial x}\left(b(x){\partial }/{\partial x}\right)$ defined in \eqref{sdoD} and \eqref{sdoDD}, respectively. The operators  $D_x$ and $D_{xx}^{(b)}$  are { ultra-compatible SBP operators}.   \end{lemma}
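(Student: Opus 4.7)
The plan is to unpack the three nested definitions in order: first verify that $D_x$ and $D_{xx}^{(b)}$ are SBP operators in the sense of Definition \ref{Def:SBP_Operator}, then that they are fully compatible in the sense of Definition \ref{Def:Compatible_SBP_Operator}, and finally read off that the remainder $R_x^{(b)}$ vanishes identically.

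For the first derivative the SBP property is essentially handed to us by the construction. Since $H_x = (\Delta x/2) W$ is diagonal with strictly positive GLL weights $w_j > 0$, it is symmetric positive definite and defines the required discrete inner product. The identity $Q + Q^T = B = \mathrm{diag}([-1,0,\dots,0,1])$ recorded in \eqref{SBP_1st} follows from \eqref{eq:stiffnes_matrix} applied to the Lagrange nodal basis at the GLL nodes: the endpoints are included in the quadrature, so $Q_{ij} + Q_{ji} = \phi_i(1)\phi_j(1) - \phi_i(-1)\phi_j(-1)$, giving $\mathbf{u}^T B \mathbf{u} = u_{P+1}^2 - u_1^2$.

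For the second derivative the key step is the single algebraic rearrangement already indicated in \eqref{SBP_2nd}. Starting from $D_{xx}^{(b)} = D_x \mathbf{b} D_x$, I substitute $D_x = H_x^{-1}Q$ and use $Q = B - Q^T$ together with $Q^T = (H_x D_x)^T = D_x^T H_x$ to obtain
\begin{equation*}
D_{xx}^{(b)} = H_x^{-1} Q\, \mathbf{b} D_x = H_x^{-1}\bigl(B \mathbf{b} D_x - D_x^T H_x \mathbf{b} D_x\bigr).
\end{equation*}
Matching this against Definitions \ref{Def:SBP_Operator} and \ref{Def:Compatible_SBP_Operator}, I read off $S_x = D_x$, $M_x^{(b)} = D_x^T H_x \mathbf{b} D_x$, and $R_x^{(b)} \equiv 0$. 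Since $b(x_i) > 0$ and $w_i > 0$, the product $H_x \mathbf{b}$ is diagonal with strictly positive entries, so $M_x^{(b)} = D_x^T (H_x \mathbf{b}) D_x$ is symmetric positive semidefinite and the SBP boundary term takes the required form $u_{P+1}(\mathbf{b} D_x \mathbf{u})_{P+1} - u_1(\mathbf{b} D_x \mathbf{u})_1$.

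There is no genuine obstacle here: the entire content of the lemma is that \emph{defining} the second derivative as $D_x \mathbf{b} D_x$, rather than through a separate Galerkin discretisation of the weighted bilinear form, automatically produces a fully compatible decomposition with vanishing remainder. Ultra-compatibility then follows immediately from its definition. The only subtlety worth flagging, which is handled outside this lemma, is that applying the first derivative twice could in principle degrade boundary accuracy for classical SBP finite difference operators; for the spectral operator $D$ this is not an issue, as the global spectral accuracy is already guaranteed by \eqref{err_D}.
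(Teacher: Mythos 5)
Your proof is correct and follows essentially the same route as the paper, which simply asserts that the lemma follows from \eqref{SBP_1st} and \eqref{SBP_2nd}; you have filled in the algebra (substituting $Q = B - Q^T$ and $Q^T = D_x^T H_x$ to expose $M_x^{(b)} = D_x^T H_x \mathbf{b} D_x \ge 0$, $S_x = D_x$, $R_x^{(b)} \equiv 0$) that the paper leaves implicit.
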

  \begin{proof}
  The proof of Lemma \ref{lem:SBP_Ultra} follows from \eqref{SBP_1st}  and \eqref{SBP_2nd}.
  \end{proof}
  
  \subsection{Accuracy of the spectral difference SBP operators}
We present the accuracy property of the spectral difference operators in the following lemma.
\begin{lemma}
Consider the spectral difference operator $D_x$ in \eqref{sdoD} and $D_{xx}^{(b)}$  in \eqref{sdoDD}. In a single element $\Omega_k=[x_k, x_{k+1}]$ with length $\Delta x_k =  x_{k+1}- x_{k}$, the truncation error of the first derivative approximation $D_x$ is $\mathcal{O}({\Delta x_k^P})$, and  the truncation error of the second derivative approximation $D_{xx}^{(b)}$ is $\mathcal{O}({\Delta x_k^{P-1}})$. More precisely, 
\begin{equation}\label{err_Dx}
D_x\mathbf{U} \Big|_{j}= \frac{\partial u}{\partial x}\Big|_{x=x_k^{(j)}}+ C_1 {\Delta x_k^P} \left|\frac{\partial^{P+1}u}{\partial x^{P+1}}(z_1)\right|,\quad D_{xx}^{(b)}\mathbf{U} \Big|_{j}=  \frac{\partial }{\partial x}\left(b(x)\frac{\partial u}{\partial x}\right) \Big|_{x=x_k^{(j)}}+ C_2 {\Delta x_k^{P-1}} \left|\frac{\partial ^{P+1}u}{\partial x^{P+1}}(z_2)\right|,  
\end{equation}
where $x_k^{(j)} =  x_k + \frac{\Delta{x}_k}{2}\left(1 + \xi_j \right),\ j = 1, 2, \cdots P+1$  are the quadrature nodes, $z_1,z_2\in\Omega_k$, and $\mathbf{U}$ is any smooth function $u(x)$ evaluated on the quadrature nodes, i.e. $\mathbf{U}_{j}:=u(x_k^{(j)})$. The constant $C_1$ and $C_2$ are independent of $\Delta x_k$. 

\end{lemma}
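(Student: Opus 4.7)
The plan is to work in the reference element $\widetilde{\Omega}=[-1,1]$, establish pointwise error bounds there, and then pull the estimates back to the physical element $\Omega_k$ via the affine map \eqref{eq:transf_0}. Under this map one has $d/dx = (2/\Delta x_k)\,d/d\xi$, so that the physical operator in \eqref{sdoD} satisfies $D_x = (2/\Delta x_k)\,D$, with $D=W^{-1}Q$ the reference spectral difference operator. Because $b$ is smooth and the map is affine, the chain rule relating nodal derivatives in $x$ and $\xi$ is transparent and picks up the factor $(\Delta x_k/2)^n$ at the $n$-th derivative.

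For the first derivative, I would use the fact that $D\mathbf{U}\big|_j = U'(\xi_j)$, where $U\in\mathbb{P}^P$ is the Lagrange interpolant of $u$ through the $P+1$ GLL nodes. The classical polynomial interpolation remainder gives $u(\xi)-U(\xi) = \omega(\xi)\,u^{(P+1)}(\eta(\xi))/(P+1)!$ with $\omega(\xi)=\prod_{i=1}^{P+1}(\xi-\xi_i)$. Differentiating and evaluating at a node $\xi_j$, the contribution involving $\omega(\xi_j)=0$ vanishes, leaving $|(u-U)'(\xi_j)| \le C_P\,\|u^{(P+1)}\|_{L^\infty(\widetilde{\Omega})}$ with $C_P$ depending only on $P$. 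Applying the chain rule $u^{(P+1)}_\xi = (\Delta x_k/2)^{P+1}\,u^{(P+1)}_x$ and multiplying by the $2/\Delta x_k$ scaling from $D_x = (2/\Delta x_k)\,D$ yields the announced bound $C_1\,\Delta x_k^P\,|u^{(P+1)}(z_1)|$ for some $z_1\in\Omega_k$, with the intermediate point provided by Peano's kernel form of the remainder.

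For the second derivative, I would split
\begin{equation*}
D_{xx}^{(b)}\mathbf{U} \;=\; D_x\bigl(\mathbf{b}\,\mathbf{u}_x\bigr) \;+\; D_x\bigl(\mathbf{b}\,(D_x\mathbf{U}-\mathbf{u}_x)\bigr),
\end{equation*}
where $\mathbf{u}_x$ denotes the exact nodal values $u'(x_k^{(i)})$. The first term applies $D_x$ to the nodal values of the smooth function $w=bu'$, so the first-derivative estimate established above gives $D_x(\mathbf{b}\,\mathbf{u}_x)\big|_j = (bu')'(x_k^{(j)}) + \mathcal{O}(\Delta x_k^P)$, which produces the correct leading term. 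For the second term, the vector $\mathbf{b}(D_x\mathbf{U}-\mathbf{u}_x)$ has entries of size $\mathcal{O}(\Delta x_k^P\,|u^{(P+1)}|)$ by the first step, and I would invoke a Markov-type inverse inequality: since $D_x\mathbf{v}$ is the pointwise derivative of the degree-$P$ GLL interpolant of $\mathbf{v}$, combining the standard Markov bound $\|p'\|_\infty\le P^2\|p\|_\infty$ for $p\in\mathbb{P}^P$ with the boundedness of the GLL Lebesgue constant yields $\|D_x\mathbf{v}\|_\infty \le (C(P)/\Delta x_k)\,\|\mathbf{v}\|_\infty$. The second term therefore contributes $\mathcal{O}(\Delta x_k^{P-1})$ pointwise, which dominates the $\mathcal{O}(\Delta x_k^P)$ contribution from the first term and produces the claimed rate.

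The main obstacle, as expected for a composite second-derivative operator of the form $D_x\mathbf{b}D_x$, is the loss of one order of accuracy arising from differentiating an $\mathcal{O}(\Delta x_k^P)$ error vector. Making the Markov-type inverse inequality quantitative enough that the constant $C_2$ depends only on $P$ and $\|b\|_{L^\infty(\Omega_k)}$ is the delicate step; the remaining mean-value representation of the error in terms of $u^{(P+1)}(z_2)$ with $z_2\in\Omega_k$ follows by applying Peano's kernel theorem to the (linear) truncation functional on each row of $D_{xx}^{(b)}-\partial_x(b\,\partial_x)$, using that this functional annihilates polynomials of degree at most $P$ when $b$ is constant, with the variable-$b$ correction handled by the smooth multiplier.
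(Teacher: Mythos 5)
Your proposal is correct and rests on the same underlying tool as the paper: the pointwise derivative error bound for Lagrange interpolation at the nodes (the paper's entire proof is a citation of Theorem 3 in Howell 1991 together with the remark that the result "can easily be adapted"). What you have done is actually carry out that adaptation: the splitting $D_x(\mathbf{b}\,\mathbf{u}_x) + D_x\bigl(\mathbf{b}(D_x\mathbf{U}-\mathbf{u}_x)\bigr)$ plus the Markov/Lebesgue-constant inverse inequality $\|D_x\mathbf{v}\|_\infty \le C(P)\Delta x_k^{-1}\|\mathbf{v}\|_\infty$ is exactly the mechanism that produces the one-order loss for the composite operator, and since the lemma concerns $h$-refinement at fixed $P$, constants depending on $P$ are harmless. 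One small imprecision worth noting: the first term of your splitting carries an error proportional to $(bu')^{(P+1)}$, which for variable $b$ involves $u^{(P+2)}$ rather than only $u^{(P+1)}$ as the statement's remainder form suggests; this term is subdominant ($\mathcal{O}(\Delta x_k^{P})$ versus $\mathcal{O}(\Delta x_k^{P-1})$), so it does not affect the claimed rate, but a fully literal match to \eqref{err_Dx} would require either assuming the extra regularity or absorbing that contribution into the constant.
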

\begin{proof}
The proof can easily be adapted from the error bound for the Lagrange interpolation, see Theorem 3 in \cite{Howell1991}.
\end{proof}

\section{The shifted wave equation}\label{sec:shifted_wave_equation}
We consider the  (1+1)-dimensions shifted wave equation
\begin{align}\label{1deqn}
\frac{\partial }{\partial t}\left(\frac{\partial u}{\partial t} - a\frac{\partial u }{\partial x} \right) - \frac{\partial }{\partial x}\left(a\left(\frac{\partial u}{\partial t} - a\frac{\partial u}{\partial x} \right) + b\frac{\partial u}{\partial x} \right) = 0, 
\end{align}
where $u$ is the unknown field, $a$ and $b>0$ are smooth real-valued functions in the spatial domain $x\in\Omega \subset \mathbb{R}$. As will be seen, the parameter $c=b-a^2$ plays an important role in the well-posedness of \eqref{1deqn} and the construction of numerical methods. 

\subsection{Conservation principle and energy estimate}
The PDE \eqref{1deqn} is in the conservative form and satisfies a conservation principle. To see this, we introduce the  flux function $F(u)$ given by
\begin{align}\label{pde_flux}
F(u)=a\left(\frac{\partial u}{\partial t} - a\frac{\partial u}{\partial x} \right) + b\frac{\partial u}{\partial x}.
\end{align}
We can then write \eqref{1deqn} as
\begin{align}\label{eq:1deqn_flux}
\frac{\partial }{\partial t}\left(\frac{\partial u}{\partial t} - a\frac{\partial u }{\partial x} \right) - \frac{\partial F(u)}{\partial x} = 0.
\end{align}
The PDE flux $F(u)$ has two important components, $a\left({\partial u}/{\partial t} - a{\partial u}/{\partial x} \right)$ the contribution from the advective transport when $ a \ne 0$, and $b{\partial u}/{\partial x}$ the contribution from the expanding pressure wave.

Multiplying \eqref{eq:1deqn_flux} by a smooth test function $\phi$ and integrating over the domain $\Omega$, we have
\begin{align}
\left( \phi, \frac{\partial }{\partial t}\left(\frac{\partial u}{\partial t} - a\frac{\partial u }{\partial x} \right)\right)_{\Omega} - \left(\phi, \frac{\partial F(u)}{\partial x}\right)_{\Omega} = 0.
\end{align}
Integration by parts yields
\begin{align}\label{eq:integrate_by_parts}
\left( \phi, \frac{\partial }{\partial t}\left(\frac{\partial u}{\partial t} - a\frac{\partial u }{\partial x} \right)\right)_{\Omega} + \left(\frac{\partial \phi}{\partial x}, F(u)\right)_{\Omega} - \phi F(u)\Big|_B  = 0.
\end{align}
Substituting a particular test function $\phi=1$ in \eqref{eq:integrate_by_parts}, we obtain
\begin{align}\label{eq:conservative_principle}
\int_{\Omega}\frac{\partial }{\partial t} \left(\frac{\partial u}{\partial t} - a\frac{\partial u }{\partial x}\right)dx =  F(u)\Big|_B,
\end{align}
which leads to the following theorem for the conservation principle. 
\begin{theorem}\label{theo:conservative_principle}
The PDE \eqref{eq:1deqn_flux} satisfies the conservation principle
\begin{align*}
\frac{d}{dt}\left(1, \left(\frac{\partial u}{\partial t} - a\frac{\partial u }{\partial x}\right)\right)_{\Omega} = \frac{d}{dt}\int_{\Omega}\left(\frac{\partial u}{\partial t} - a\frac{\partial u }{\partial x}\right)dx = 0,
\end{align*}
 if $F(u)\Big|_{B } = 0$.
\end{theorem}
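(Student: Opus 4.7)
The plan is to derive the conclusion as an almost immediate consequence of equation \eqref{eq:conservative_principle}, which the text has already established by substituting the test function $\phi \equiv 1$ into the weak formulation \eqref{eq:integrate_by_parts}. So most of the work is already done upstream; the theorem just repackages that identity under the hypothesis on the boundary flux.

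The first step I would take is to invoke \eqref{eq:conservative_principle} verbatim, which asserts that
\begin{equation*}
\int_{\Omega}\frac{\partial }{\partial t}\left(\frac{\partial u}{\partial t} - a\frac{\partial u}{\partial x}\right)dx = F(u)\Big|_{B}.
\end{equation*}
Next, I would justify interchanging the time derivative with the spatial integral. Because the domain $\Omega$ is fixed in time and the coefficient $a=a(x)$ is independent of $t$, for sufficiently smooth solutions $u$ the Leibniz rule yields
\begin{equation*}
\int_{\Omega}\frac{\partial }{\partial t}\left(\frac{\partial u}{\partial t} - a\frac{\partial u}{\partial x}\right)dx \;=\; \frac{d}{dt}\int_{\Omega}\left(\frac{\partial u}{\partial t} - a\frac{\partial u}{\partial x}\right)dx \;=\; \frac{d}{dt}\left(1,\, \frac{\partial u}{\partial t} - a\frac{\partial u}{\partial x}\right)_{\Omega}.
\end{equation*}
Combining this with the previous identity expresses the time derivative of the conserved quantity as the boundary flux $F(u)|_B$.

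Finally, imposing the hypothesis $F(u)|_B=0$ makes the right-hand side vanish, which is exactly the claim of the theorem.

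There is no real obstacle here: the only subtlety is regularity sufficient to swap $\partial_t$ with $\int_\Omega dx$, which is standard and implicit in the smoothness assumed on $u$ throughout the section. The conceptual content of the statement was already absorbed into the derivation of \eqref{eq:conservative_principle} via integration by parts with $\phi=1$, so the theorem can be viewed as a direct corollary rather than as requiring new machinery.
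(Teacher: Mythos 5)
Your proposal is correct and follows exactly the paper's route: the paper derives \eqref{eq:conservative_principle} by taking $\phi\equiv 1$ in \eqref{eq:integrate_by_parts} (so the term $\left(\partial\phi/\partial x, F(u)\right)_{\Omega}$ drops out), then pulls the time derivative outside the integral and imposes $F(u)\big|_{B}=0$. Your added remark about the Leibniz interchange being justified by the fixed domain and time-independence of $a$ is a harmless explicit statement of what the paper leaves implicit.
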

Theorem \ref{theo:conservative_principle} holds for any periodic data $u(x,t)$, or a Cauchy data $u(x,t)$ with compact support in $\Omega$.

If a numerical method satisfies a discrete equivalence of Theorem \ref{theo:conservative_principle}, we say that the method is conservative.
This will be useful in preserving constraints imposed by the PDE. Although we consider linear problems in this paper, a conservative scheme will be important in proving the convergence of the numerical method for nonlinear problems with weak solutions \cite{LaxWendroff1960}.

Next, we derive an energy estimate for the continuous problem \eqref{1deqn} and identify boundary and interface conditions that lead to a well-posed problem. 

We introduce the constant 
\begin{align}\label{eq:constant}
C_{a,b} = \max_x \left(\left| \frac{\partial a}{\partial x} \right| + \left| \frac{\partial a}{\partial x} - \frac{a}{b}\frac{\partial b}{\partial x}  \right|\right),
\end{align}
and define the energy
\begin{align}\label{eq:energy_continuous}
 E_b(t) =    \left(\left(\frac{\partial u}{\partial t} - a\frac{\partial u }{\partial x} \right) , \left(\frac{\partial u}{\partial t} - a\frac{\partial u }{\partial x} \right) \right)_{\Omega} 
          + \left(\frac{\partial u }{\partial x}, \frac{\partial u }{\partial x}\right)_{b\Omega}.
\end{align}
Note that for constants $a$ and $b$ we have $C_{a,b} =0$.
The energy $ E_b(t) \geq 0$ given in \eqref{eq:energy_continuous} defines a semi-norm.
We have
\begin{theorem}\label{theo:energy_estimate_continuous}
The PDE \eqref{eq:1deqn_flux}, with $a, b \in \mathbb{R}$ and $b>0$, satisfies the following estimate for the energy change rate 
\begin{align}\label{eq:energy_estimate_continuous}
\frac{d}{dt} E_b(t)  \le C_{ab} E_b(t)+ \mathrm{BT}_s(t),
\end{align}
where the continuous energy $E_b(t)$ is defined  in \eqref{eq:energy_continuous}, and the boundary term defined by
 $$
 \mathrm{BT}_s(t)=\left[\left(\frac{\partial u}{\partial t} - a\frac{\partial u }{\partial x} \right)\left(a\left(\frac{\partial u}{\partial t} - a\frac{\partial u }{\partial x} \right)+b\frac{\partial u}{\partial x}\right)\right]\Big|_B+b\frac{\partial u }{\partial x}\frac{\partial u }{\partial t}\Big|_B.
 $$
\end{theorem}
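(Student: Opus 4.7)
The plan is to derive the energy identity by testing the PDE against the natural ``characteristic'' variable $w=\partial u/\partial t-a\,\partial u/\partial x$ (so that the equation reads $\partial w/\partial t=\partial F(u)/\partial x$) and simultaneously differentiating the second summand $\|\partial u/\partial x\|_{b\Omega}^{2}$ of $E_b$ in time. The identity $\partial u/\partial t=w+a\,\partial u/\partial x$ will be the main algebraic tool, because it lets us re-express $\partial^2 u/(\partial t\,\partial x)$ as $\partial w/\partial x+(\partial a/\partial x)\,\partial u/\partial x+a\,\partial^2u/\partial x^2$, which is exactly what is needed to make the mixed volume terms from the two halves of the energy cancel.

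First I would compute $\tfrac{d}{dt}\|w\|_\Omega^{2}=2(w,\partial w/\partial t)_\Omega=2(w,\partial F/\partial x)_\Omega$ and integrate by parts to obtain $-2(\partial w/\partial x,F)_\Omega+2wF\big|_B$. Expanding $F=aw+b\,\partial u/\partial x$ produces one piece $-2(a\,\partial w/\partial x,w)_\Omega$ and one piece $-2(\partial w/\partial x,b\,\partial u/\partial x)_\Omega$. Next I would compute $\tfrac{d}{dt}\|\partial u/\partial x\|_{b\Omega}^{2}=2(b\,\partial u/\partial x,\partial^2u/(\partial t\,\partial x))_\Omega$ and, via $\partial u/\partial t=w+a\,\partial u/\partial x$, split the result into $+2(b\,\partial u/\partial x,\partial w/\partial x)_\Omega+2(\partial a/\partial x,b(\partial u/\partial x)^{2})_\Omega+2(b\,\partial u/\partial x,a\,\partial^2u/\partial x^2)_\Omega$. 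The first of these exactly cancels the $-2(\partial w/\partial x,b\,\partial u/\partial x)_\Omega$ above, which is the key cancellation of the argument.

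The remaining two ``derivative-moving'' steps are both of the form $c(x)\,\partial_x(f^{2})$: I would write $-2(a\,\partial w/\partial x,w)_\Omega=-\int_\Omega a\,\partial_x(w^{2})\,dx$ and $2(b\,\partial u/\partial x,a\,\partial^2u/\partial x^2)_\Omega=\int_\Omega ab\,\partial_x((\partial u/\partial x)^{2})\,dx$, and integrate by parts once more. This yields boundary contributions $-aw^{2}\big|_B+ab(\partial u/\partial x)^{2}\big|_B$ together with volume residuals involving $\partial a/\partial x$ and $\partial(ab)/\partial x$. Collecting the three boundary contributions $-aw^{2}+2wF+ab(\partial u/\partial x)^{2}\big|_B$ and using $F=aw+b\,\partial u/\partial x$ together with $\partial u/\partial t=w+a\,\partial u/\partial x$ should produce precisely $\mathrm{BT}_s(t)$; this rearrangement is the bookkeeping step that I expect to be the most error-prone and is the main obstacle.

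Finally, for the volume remainder I would expand $\partial(ab)/\partial x=b\,\partial a/\partial x+a\,\partial b/\partial x$ so that the leftover terms collapse to $(\partial a/\partial x,w^{2})_\Omega+\bigl(\partial a/\partial x-(a/b)\,\partial b/\partial x,\;b(\partial u/\partial x)^{2}\bigr)_\Omega$. Each factor is bounded by $\max_x|\partial a/\partial x|$ and $\max_x|\partial a/\partial x-(a/b)\,\partial b/\partial x|$ respectively, and since $pr+qs\le\max(p+q)(r+s)$ for nonnegative $p,q,r,s$, the sum is dominated by $C_{a,b}\bigl(\|w\|_\Omega^{2}+\|\partial u/\partial x\|_{b\Omega}^{2}\bigr)=C_{a,b}E_b(t)$. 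Combining with the boundary identification from the previous paragraph yields $dE_b/dt\le C_{a,b}E_b(t)+\mathrm{BT}_s(t)$, as claimed.
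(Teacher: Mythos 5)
Your proposal is correct and follows essentially the same route as the paper's proof: test against the characteristic variable $w=\partial u/\partial t-a\,\partial u/\partial x$, split the flux $F=aw+b\,\partial u/\partial x$, exploit the cancellation of the mixed term $\bigl(\partial w/\partial x,\,b\,\partial u/\partial x\bigr)_{\Omega}$ against the time derivative of $\|\partial u/\partial x\|_{b\Omega}^{2}$, integrate by parts on the two advective $c(x)\,\partial_x(f^{2})$ terms, and bound the volume residuals by $C_{a,b}E_b(t)$. The bookkeeping step you flagged does close: $-aw^{2}+2wF+ab(\partial u/\partial x)^{2}\big|_B$ is identically equal to $\mathrm{BT}_s(t)$ (and your signs on the boundary and volume residuals are the consistent ones).
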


\begin{proof}
Consider \eqref{eq:integrate_by_parts} and set $\phi = \left({\partial u}/{\partial t} - a{\partial u }/{\partial x} \right)$, we obtain
\begin{align}\label{eq:integrate_by_parts_0}
\left( \left(\frac{\partial u}{\partial t} - a\frac{\partial u }{\partial x} \right), \frac{\partial }{\partial t}\left(\frac{\partial u}{\partial t} - a\frac{\partial u }{\partial x} \right)\right)_{\Omega} + \left(\frac{\partial }{\partial x}\left(\frac{\partial u}{\partial t} - a\frac{\partial u }{\partial x} \right), F(u)\right)_{\Omega} - \left(\frac{\partial u}{\partial t} - a\frac{\partial u }{\partial x} \right) F(u)\Big|_B  = 0,
\end{align}
where $F(u)$ is defined in  \eqref{pde_flux}.  Note that 
\begin{equation}\label{eq:integrate_by_parts_1}
\begin{split}
\left(\frac{\partial }{\partial x}\left(\frac{\partial u}{\partial t} - a\frac{\partial u }{\partial x} \right), F(u)\right)_{\Omega} &=  \left(\frac{\partial }{\partial x}\left(\frac{\partial u}{\partial t} - a\frac{\partial u }{\partial x} \right), a\left(\frac{\partial u}{\partial t} - a\frac{\partial u }{\partial x} \right)\right)_{\Omega}  \\
&+ \left(\frac{\partial }{\partial x}\left(\frac{\partial u}{\partial t}\right), b\frac{\partial u }{\partial x} \right)_{\Omega} - \left(\frac{\partial }{\partial x}\left(a\frac{\partial u }{\partial x} \right), b\frac{\partial u }{\partial x} \right)_{\Omega}.
\end{split}
\end{equation}
Using
\begin{align*}
 \left(\frac{\partial }{\partial x}\left(\frac{\partial u}{\partial t} - a\frac{\partial u }{\partial x} \right), a\left(\frac{\partial u}{\partial t} - a\frac{\partial u }{\partial x} \right)\right)_{\Omega} =  
  \frac{1}{2} \left(\frac{\partial a}{\partial x}\left(\frac{\partial u}{\partial t} - a\frac{\partial u}{\partial x} \right), \left(\frac{\partial u}{\partial t} - a\frac{\partial u}{\partial x} \right)\right)_{\Omega} + \frac{1}{2}a\left(\frac{\partial u}{\partial t} - a\frac{\partial u}{\partial x} \right)^2 \Big|_B
\end{align*}
and
\begin{align*}
\left(\frac{\partial }{\partial x}\left( a\frac{\partial u}{\partial x} \right),  \left(b\frac{\partial u}{\partial x} \right)\right)_{\Omega} = \frac{1}{2} \left( \left(\frac{\partial a}{\partial x}-\frac{a}{b}\frac{\partial b}{\partial x}  \right)\frac{\partial u }{\partial x}, b\frac{\partial u }{\partial x}\right)_{\Omega} + \frac{1}{2}ab\left(\frac{\partial u}{\partial x}\right)^2 \Big|_B,
\end{align*}
in \eqref{eq:integrate_by_parts_1} we have
\begin{equation}\label{eq:integrate_by_parts_4}
\begin{split}
&\left(\frac{\partial }{\partial x}\left(\frac{\partial u}{\partial t} - a\frac{\partial u }{\partial x} \right), F(u)\right)_{\Omega} =  \left(\frac{\partial }{\partial x}\left(\frac{\partial u}{\partial t}\right), b\frac{\partial u }{\partial x} \right)_{\Omega} + \frac{1}{2} \left( \left(\frac{\partial a}{\partial x}-\frac{a}{b}\frac{\partial b}{\partial x}  \right)\frac{\partial u }{\partial x}, b\frac{\partial u }{\partial x}\right)_{\Omega} \\
+&  \frac{1}{2} \left(\frac{\partial a}{\partial x}\left(\frac{\partial u}{\partial t} - a\frac{\partial u}{\partial x} \right), \left(\frac{\partial u}{\partial t} - a\frac{\partial u}{\partial x} \right)\right)_{\Omega} + \frac{1}{2}a\left(\frac{\partial u}{\partial t} - a\frac{\partial u}{\partial x} \right)^2 \Big|_B +  \frac{1}{2}ab\left(\frac{\partial u}{\partial x}\right)^2 \Big|_B.
\end{split}
\end{equation}
Using \eqref{eq:integrate_by_parts_4} in \eqref{eq:integrate_by_parts_0} and adding the transpose of the product gives
\begin{equation}\label{eq:energy_continuous_derivative}
 \begin{split}
 &\frac{d}{dt}\left( \left(\left(\frac{\partial u}{\partial t} - a\frac{\partial u }{\partial x} \right) , \left(\frac{\partial u}{\partial t} - a\frac{\partial u }{\partial x} \right) \right)_{\Omega} 
          + \left(\frac{\partial u }{\partial x}, \frac{\partial u }{\partial x}\right)_{b\Omega}\right) =   \left(\frac{\partial a}{\partial x}\left(\frac{\partial u}{\partial t} - a\frac{\partial u}{\partial x} \right), \left(\frac{\partial u}{\partial t} - a\frac{\partial u}{\partial x} \right)\right)_{\Omega} \\
          &+\left( \left(\frac{\partial a}{\partial x}-\frac{a}{b}\frac{\partial b}{\partial x}  \right)\frac{\partial u }{\partial x}, b\frac{\partial u }{\partial x}\right)_{\Omega}+ a\left(\frac{\partial u}{\partial t} - a\frac{\partial u}{\partial x} \right)^2 \Big|_B +  ab\left(\frac{\partial u}{\partial x}\right)^2 \Big|_B +2\left(\frac{\partial u}{\partial t} - a\frac{\partial u }{\partial x} \right) F(u)\Big|_B.
 \end{split}
\end{equation}
On the left hand side of \eqref{eq:energy_continuous_derivative} we recognise the time derivative of the energy. Introducing $\mathrm{BT}_s$ for the boundary terms and using the Cauchy-Schwartz inequality on the right hand side of \eqref{eq:energy_continuous_derivative} gives 
\begin{align}\label{ce5}
\frac{d}{dt} E_b(t)   \leq  C_{a,b} E_b(t) + \mathrm{BT}_s(t),
\end{align}
where the constant $C_{a,b}$ depends on the material property 
\begin{align*}
C_{a,b} = \max_x \left(\left| \frac{\partial a}{\partial x} \right| + \left| \frac{\partial a}{\partial x} - \frac{a}{b}\frac{\partial b}{\partial x}  \right|\right).
\end{align*}
\end{proof}
Theorem \ref{theo:energy_estimate_continuous} holds for  \eqref{eq:1deqn_flux}, with $a, b \in \mathbb{R}$ and $b>0$.
We note however, if in particular {$c =b-a^2 > 0$}, from \eqref{1deqn} we have
\begin{align}\label{1deqn_c0}
\frac{\partial^2 u}{\partial t^2} - \frac{\partial }{\partial t}\left(a\frac{\partial u }{\partial x} \right) - \frac{\partial }{\partial x}\left(a\frac{\partial u }{\partial t} \right)  - \frac{\partial }{\partial x}\left(c\frac{\partial u}{\partial x} \right) = 0. 
\end{align}
We introduce the energy 
\begin{align}\label{eq:energy_continuous_c0}
 E_c(t) =    \left(\frac{\partial u}{\partial t}  , \frac{\partial u}{\partial t} \right)_{\Omega} + \left(\frac{\partial u }{\partial x}, \frac{\partial u }{\partial x}\right)_{c\Omega}.
\end{align}
The energy $ E_c(t)$ defines a space-time weighted $H^{1}(\Omega)$ norm.
\begin{theorem}\label{theo:energy_estimate_continuous_c0}
The PDE \eqref{eq:1deqn_flux}, with $a, b \in \mathbb{R}$, $b>0$ and {$c=b-a^2 > 0$}, satisfies the following estimate for the energy change rate 
\begin{align}\label{ce5_c0}
\frac{d}{dt} E_c(t)    =    2\frac{\partial u}{\partial t}\left(a \frac{\partial u}{\partial t} + c\frac{\partial u}{\partial x}\right)\Big|_{B}.
\end{align}
\end{theorem}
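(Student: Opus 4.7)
The plan is to exploit the symmetric form \eqref{1deqn_c0} of the PDE that is available when $c = b - a^2 > 0$, and test against $\partial u/\partial t$ rather than against the combination $\partial u/\partial t - a\,\partial u/\partial x$ that was used in the proof of Theorem \ref{theo:energy_estimate_continuous}. The target is an equality, not merely an inequality, so the manipulation needs to produce exact cancellations of all interior terms; the ingredients for this are already encoded in the symmetric structure of \eqref{1deqn_c0} together with $a = a(x)$, $c = c(x)$ being time-independent.

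First I would multiply \eqref{1deqn_c0} by $\partial u/\partial t$ and integrate over $\Omega$, obtaining the four-term identity
\begin{equation*}
\left(\frac{\partial u}{\partial t},\frac{\partial^2 u}{\partial t^2}\right)_{\Omega}
- \left(\frac{\partial u}{\partial t},\frac{\partial}{\partial t}\!\left(a\frac{\partial u}{\partial x}\right)\right)_{\Omega}
- \left(\frac{\partial u}{\partial t},\frac{\partial}{\partial x}\!\left(a\frac{\partial u}{\partial t}\right)\right)_{\Omega}
- \left(\frac{\partial u}{\partial t},\frac{\partial}{\partial x}\!\left(c\frac{\partial u}{\partial x}\right)\right)_{\Omega}=0.
\end{equation*}
The first term is $\tfrac12 \tfrac{d}{dt}\|\partial u/\partial t\|_{\Omega}^2$. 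For the fourth term, integration by parts (as in \eqref{eq:IBP_dxx} with $b$ replaced by $c$) gives $\tfrac12\tfrac{d}{dt}\|\partial u/\partial x\|_{c\Omega}^2 - \bigl(c\,\partial u/\partial x\bigr)\bigl(\partial u/\partial t\bigr)\big|_B$. These two contributions together yield exactly $\tfrac12\tfrac{d}{dt}E_c(t)$ plus the expected boundary piece.

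The key step, and the one where the symmetric form really pays off, is to show that the two middle (cross) terms collapse to a single boundary term. Since $a$ depends only on $x$, the second term equals $-\bigl(\partial u/\partial t,\, a\, \partial^2 u/\partial x\partial t\bigr)_{\Omega}$, while expanding the $x$-derivative in the third term gives $-\bigl(\partial u/\partial t,\, a_x\, \partial u/\partial t\bigr)_{\Omega} - \bigl(\partial u/\partial t,\, a\,\partial^2 u/\partial x\partial t\bigr)_{\Omega}$. Summing and using $2a(\partial u/\partial t)(\partial^2 u/\partial x\partial t) = a\,\partial_x\!\bigl((\partial u/\partial t)^2\bigr)$ together with a further integration by parts, the interior $a_x (\partial u/\partial t)^2$ contributions cancel exactly, leaving only $-a(\partial u/\partial t)^2\big|_B$. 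This exact cancellation is the main obstacle in the sense that it is the only nontrivial verification; it works precisely because the form \eqref{1deqn_c0} is symmetric in $(\partial u/\partial t,\partial u/\partial x)$ with coefficient $a$.

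Finally I would multiply the resulting identity by $2$ to match the factor of $2$ on the right-hand side of \eqref{ce5_c0}, and rearrange to obtain
\begin{equation*}
\frac{d}{dt}E_c(t) = 2a\!\left(\frac{\partial u}{\partial t}\right)^{\!2}\Big|_B + 2\,\frac{\partial u}{\partial t}\,c\,\frac{\partial u}{\partial x}\Big|_B = 2\,\frac{\partial u}{\partial t}\!\left(a\frac{\partial u}{\partial t} + c\frac{\partial u}{\partial x}\right)\!\Big|_B,
\end{equation*}
which is the claimed identity. Unlike Theorem \ref{theo:energy_estimate_continuous}, no Cauchy--Schwarz bound is invoked, so the statement is a genuine equality and no constant $C_{a,b}$ appears.
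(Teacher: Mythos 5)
Your proposal is correct and follows essentially the same route as the paper: multiply the symmetric form \eqref{1deqn_c0} by $\partial u/\partial t$, integrate by parts so that the cross terms collapse to the boundary term $a(\partial u/\partial t)^2\big|_B$ and the remaining terms assemble into $\tfrac12\tfrac{d}{dt}E_c(t)$. The only difference is that you spell out the cancellation of the interior $a_x(\partial u/\partial t)^2$ contributions explicitly, which the paper leaves implicit in its one-line appeal to integration by parts.
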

\begin{proof}
Consider 
$$
\left(\phi, \frac{\partial^2 u}{\partial t^2} \right)_{\Omega}- \left(\phi, \frac{\partial }{\partial t}\left(a\frac{\partial u }{\partial x} \right)\right)_{\Omega} - \left(\phi, \frac{\partial }{\partial x}\left(a\frac{\partial u }{\partial t} \right)\right)_{\Omega}  - \left(\phi, \frac{\partial }{\partial x}\left(c\frac{\partial u}{\partial x} \right)\right)_{\Omega} = 0. 
$$
Using the integration-by-parts and setting $\phi = {\partial u}/{\partial t} $, we obtain
\begin{align}\label{eq:integrate_by_parts_0_c0}
\left(\frac{\partial u}{\partial t}, \frac{\partial^2 u}{\partial t^2} \right)_{\Omega}  + \left(\frac{\partial^2 u}{\partial x\partial t}, \frac{\partial u}{\partial x} \right)_{c\Omega} = \frac{\partial u}{\partial t}\left(a \frac{\partial u}{\partial t} + c\frac{\partial u}{\partial x}\right)\Big|_{B}. 
\end{align}
 Adding the transpose of the product gives
\begin{align}\label{ce5_c00}
\frac{d}{dt} E_c(t)    =    2\frac{\partial u}{\partial t}\left(a \frac{\partial u}{\partial t} + c\frac{\partial u}{\partial x}\right)\Big|_{B}.
\end{align}
\end{proof}
 For a Cauchy problem or periodic boundary conditions, we have $\mathrm{BT}_s(t) = 0$ and 
 \begin{align}\label{ce5_0}
\frac{d}{dt} E_b(t) \leq  C_{a,b} E_b(t), \qquad \frac{d}{dt} E_c(t)  =0\iff E_c(t) = E_c(0).
\end{align}
 Note that $C_{a,b}=0$ for constants $a, b$ and ${dE_b(t)}/{dt}  \leq  0\iff E_b(t) = E_b(0)$.

 For IBVPs, in order to obtain a continuous energy estimate, the physical boundary or interface conditions must be such that the boundary contribution in the energy change rate \eqref{ce5} is negative semi-definite $\mathrm{BT}_s(t) \le 0$.
\begin{remark}
The boundary contribution terms $\mathrm{BT}_s(t)$ in \eqref{eq:energy_estimate_continuous}, 
\begin{equation}\label{BC}
 \mathrm{BT}_s(t) = \left(\frac{\partial u}{\partial t} - a\frac{\partial u }{\partial x} \right)\left(a\left(\frac{\partial u}{\partial t} - a\frac{\partial u }{\partial x} \right)+b\frac{\partial u}{\partial x}\right)\Big|_B+b\frac{\partial u }{\partial x}\frac{\partial u }{\partial t}\Big|_B,
\end{equation}
can be equivalently written as
\begin{equation}\label{BC_lambda}
\mathrm{BT}_s(t) = \frac{\lambda_1}{2}\left(\frac{\partial u}{\partial t} - \lambda_2\frac{\partial u}{\partial x} \right)^2 \Big|_B + \frac{\lambda_2}{2}\left(\frac{\partial u}{\partial t} - \lambda_1\frac{\partial u}{\partial x} \right)^2  \Big|_B,
\end{equation}
where 
\begin{equation}\label{lambda}
\lambda_1 = a + \sqrt{b}, \quad \lambda_2 = a - \sqrt{b}.
\end{equation}
We also have the relation $c=b-a^2=-\lambda_1\lambda_2$.
\end{remark}

\subsection{Well-posed interface conditions}
We will now split our domain into two $\Omega = \Omega_{-} \cup \Omega_{+}$ with an interface at $x_I\in\Omega$, where $\Omega_{-} =\left\{x| x\le x_I\right\}$  and $\Omega_{+} =\left\{x| x\ge x_I\right\}$.
Let $u^+$ denote the solution in the positive subdomain $x\in \Omega_{+}$ and  $u^-$ denote the solution in the negative subdomain $x\in \Omega_{-}$.
We have
\begin{align}\label{eq:subdomain_1}
\frac{\partial }{\partial t}\left(\frac{\partial u^-}{\partial t} - a\frac{\partial u^- }{\partial x} \right) - \frac{\partial }{\partial x}\left(a\left(\frac{\partial u^-}{\partial t} - a\frac{\partial u^-}{\partial x} \right) + b\frac{\partial u^-}{\partial x} \right) = 0, \quad x < x_I,
\end{align}
\begin{align}\label{eq:subdomain_2}
\frac{\partial }{\partial t}\left(\frac{\partial u^+}{\partial t} - a\frac{\partial u^+ }{\partial x} \right) - \frac{\partial }{\partial x}\left(a\left(\frac{\partial u^+}{\partial t} - a\frac{\partial u^+}{\partial x} \right) + b\frac{\partial u^+}{\partial x} \right) = 0,  \quad x > x_I.
\end{align}
At the interface $x_I\in\Omega$, we define the jump in $u$ as $\lJump u \rJump = u^+(x_I)-u^-(x_I)$, where the superscripts $+$ and $-$ denote the quantity on the right and left sides of the interface. 

Our primary objective here is to derive interface conditions that will be used to couple locally adjacent spectral elements together.
To do this, we multiply  equation \eqref{eq:subdomain_1}--\eqref{eq:subdomain_2} by a sufficiently smooth function $\phi$ that vanishes at the boundaries, $\phi(B_{\pm}) = 0$, and integrate over the domain, we obtain \eqref{eq:integrate_by_parts}. Collecting contributions from both sides of the interface gives
\begin{equation}\label{eq:two_domain_weak_form}
\begin{split}
&\left( \phi, \frac{\partial }{\partial t}\left(\frac{\partial u^{-}}{\partial t} - a\frac{\partial u^{-} }{\partial x} \right)\right)_{\Omega_{-}} + \left(\frac{\partial \phi}{\partial x}, F(u^{-})\right)_{\Omega_{-}} \\
 &+\left( \phi, \frac{\partial }{\partial t}\left(\frac{\partial u^{+}}{\partial t} - a\frac{\partial u^{+} }{\partial x} \right)\right)_{\Omega_{+}} + \left(\frac{\partial \phi}{\partial x}, F(u^{+})\right)_{\Omega_{+}}  \\
 & -\phi \lJump F(u) \rJump\Big|_{x =x_I}  = 0.
 \end{split}
\end{equation}

The conservation principle  \eqref{eq:conservative_principle} requires that the jump in the flux vanishes $\lJump F(u) \rJump = 0$. As we will see later this will be useful in designing  conservative numerical interface treatment. 

To obtain a continuous energy estimate, we need the boundary terms in \eqref{eq:energy_estimate_continuous} from both sides to cancel out at the interface. Assuming the coefficients $a$ and $b$ are continuous at the interface,  we have the following interface conditions 
\begin{align}\label{eq:interface_condition}
\lJump \frac{\partial u}{\partial t} - a\frac{\partial u }{\partial x}  \rJump = 0, \quad \text{and} \quad  \lJump F(u) \rJump = 0. 
\end{align}
Also note that for smooth (or constant) coefficients, the interface condition \eqref{eq:interface_condition} is equivalent to
\begin{align}\label{eq:interface_condition_proper}
\lJump u \rJump = 0 \implies \lJump \frac{\partial u }{\partial t} \rJump  = 0, \quad \text{and} \quad \lJump \frac{\partial u }{\partial x} \rJump = 0 .
\end{align}
\begin{theorem}\label{theo:stable_interface}
Consider the two-domain formulation, \eqref{eq:subdomain_1}--\eqref{eq:subdomain_2}, with the interface condition \eqref{eq:interface_condition} or \eqref{eq:interface_condition_proper}.
Let $E_b^{\pm}(t)$ denote the energy in $\Omega_{\pm}$. We have
\begin{align*}
\frac{d}{dt} E_b(t)  \le C_{ab} E_b(t) , \quad E_b(t) = E_b^-(t) + E_b^+(t).
\end{align*}
For constant coefficients $a,b$, we have $C_{ab} =0$ and the energy is conserved,
\begin{align*}
\frac{d}{dt} E_b(t)   = 0.
\end{align*}
\end{theorem}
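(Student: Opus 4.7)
The plan is to apply Theorem \ref{theo:energy_estimate_continuous} separately on each subdomain $\Omega_{\pm}$ and then add the two estimates, arranging that the contributions at the interface $x_I$ cancel while the physical contributions at the outer boundaries $B_{\pm}$ vanish (as in the periodic or compactly supported situation of Theorem \ref{theo:conservative_principle}).

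First I would replay the derivation leading to \eqref{eq:energy_continuous_derivative} on each of $\Omega_{-}$ and $\Omega_{+}$ individually. Since \eqref{eq:subdomain_1} and \eqref{eq:subdomain_2} have exactly the same form as \eqref{1deqn}, the same integration-by-parts manipulation yields
\begin{equation*}
\frac{d}{dt} E_b^{\pm}(t) \leq C_{a,b}\, E_b^{\pm}(t) + \mathrm{BT}_s^{\pm}(t),
\end{equation*}
where $\mathrm{BT}_s^{\pm}(t)$ has the structure of \eqref{BC}, now evaluated on $\partial\Omega_{\pm}$. With the outward-orientation convention, $x_I$ appears as an upper boundary for $\Omega_{-}$ and as a lower boundary for $\Omega_{+}$, so the two interface pieces carry opposite signs and their sum assembles into a jump across $x_I$.

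Adding the two estimates and discarding the outer boundary terms at $B_{\pm}$ reduces the combined rate of change to
\begin{equation*}
\frac{d}{dt} E_b(t) \leq C_{a,b}\, E_b(t) + \lJump \bigl(\tfrac{\partial u}{\partial t} - a\tfrac{\partial u}{\partial x}\bigr) F(u) + b\tfrac{\partial u}{\partial x}\tfrac{\partial u}{\partial t} \rJump\Big|_{x_I}.
\end{equation*}
The key step is to show that the jump on the right vanishes under the interface conditions \eqref{eq:interface_condition}. The first product is annihilated directly, since $\lJump \partial_t u - a\partial_x u \rJump = 0$ and $\lJump F(u)\rJump = 0$ together with continuity of $a$ make both factors single-valued at $x_I$. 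For the second product, I would use that $F(u) = a(\partial_t u - a\partial_x u) + b\partial_x u$, which combined with continuity of $a$ gives $\lJump b\partial_x u\rJump = 0$; continuity of $b > 0$ then yields $\lJump \partial_x u\rJump = 0$, whence $\lJump \partial_t u\rJump = 0$ from the first interface condition. Hence $\lJump b\partial_x u\,\partial_t u\rJump = 0$. The alternative formulation \eqref{eq:interface_condition_proper} delivers the same conclusion even more directly, since all three factors become individually continuous at $x_I$.

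For constant coefficients, $C_{a,b} = 0$ is immediate from \eqref{eq:constant}, and the inequality collapses to $dE_b/dt = 0$, i.e.\ exact conservation of the energy. I expect the main obstacle not to be any single calculation but the bookkeeping of outward normals at the interface, so that the two interface pieces truly recombine into a single jump; once that orientation is pinned down, the interface conditions together with the continuity of $a$ and $b$ eliminate every surviving term.
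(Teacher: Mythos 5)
Your proposal is correct and follows essentially the same route as the paper: apply the single-domain energy identity on each subdomain, sum so that the interface contributions assemble into a jump at $x_I$, and use \eqref{eq:interface_condition} (or \eqref{eq:interface_condition_proper}) together with the continuity of $a$ and $b$ to annihilate that jump. One small caution: to obtain the exact conservation $\frac{d}{dt}E_b(t)=0$ for constant coefficients you must argue from the equality \eqref{eq:energy_continuous_derivative} (whose volume terms vanish identically when $a,b$ are constant), as the paper does, rather than from the Cauchy--Schwarz inequality stated in your first display, which by itself would only yield $\frac{d}{dt}E_b(t)\le 0$.
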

\begin{proof}
Theorem \ref{theo:stable_interface} is equivalent to Theorem \ref{theo:energy_estimate_continuous}. The proof of Theorem \ref{theo:stable_interface} follows similar steps where the interface condition \eqref{eq:interface_condition} or \eqref{eq:interface_condition_proper} is used to eliminate the contributions from the interface.

That is, using \eqref{eq:energy_continuous_derivative} in each subdomain $\Omega_{\pm}$, we have
\begin{equation}\label{eq:energy_continuous_derivative_pm}
 \begin{split}
 &\frac{d}{dt}\left( \left(\left(\frac{\partial u}{\partial t} - a\frac{\partial u }{\partial x} \right) , \left(\frac{\partial u}{\partial t} - a\frac{\partial u }{\partial x} \right) \right)_{\Omega_{\pm}} 
          + \left(\frac{\partial u }{\partial x}, \frac{\partial u }{\partial x}\right)_{b\Omega_{\pm}}\right) =   \left(\frac{\partial a}{\partial x}\left(\frac{\partial u}{\partial t} - a\frac{\partial u}{\partial x} \right), \left(\frac{\partial u}{\partial t} - a\frac{\partial u}{\partial x} \right)\right)_{\Omega_{\pm}} \\
          &+\left( \left(\frac{\partial a}{\partial x}-\frac{a}{b}\frac{\partial b}{\partial x}  \right)\frac{\partial u }{\partial x}, b\frac{\partial u }{\partial x}\right)_{\Omega_{\pm}} \mp a\left(\frac{\partial u}{\partial t} - a\frac{\partial u}{\partial x} \right)^2 \Big|_{x_I} \mp  ab\left(\frac{\partial u}{\partial x}\right)^2 \Big|_{x_I} \mp 2\left(\frac{\partial u}{\partial t} - a\frac{\partial u }{\partial x} \right) F(u)\Big|_{x_I}.
 \end{split}
 \end{equation}
 Summing contributions from both sides of the interface and using the interface condition \eqref{eq:interface_condition} or \eqref{eq:interface_condition_proper}, the interface terms vanish, and we obtain
 \begin{equation}\label{eq:energy_continuous_derivative_pm_0}
 \begin{split}
 &\frac{d}{dt} E_b(t)=   \sum_{\eta = -, +}\left(\left(\frac{\partial a}{\partial x}\left(\frac{\partial u}{\partial t} - a\frac{\partial u}{\partial x} \right), \left(\frac{\partial u}{\partial t} - a\frac{\partial u}{\partial x} \right)\right)_{\Omega_{\eta}} 
          +\left( \left(\frac{\partial a}{\partial x}-\frac{a}{b}\frac{\partial b}{\partial x}  \right)\frac{\partial u }{\partial x}, b\frac{\partial u }{\partial x}\right)_{\Omega_{\eta}} \right).
 \end{split}
\end{equation}
By using the Cauchy-Schwarz inequality on the right hand side of \eqref{eq:energy_continuous_derivative_pm_0}, we have the desired estimate.
Note in particular with constant coefficients $a,b$, we have ${\partial a}/{\partial x} =0$, ${\partial b}/{\partial x} =0$ and the right hand side of \eqref{eq:energy_continuous_derivative_pm_0} vanishes identically.
\end{proof}
As above, Theorem \ref{theo:stable_interface} holds for  any $a, b \in \mathbb{R}$ with $b>0$.
We note however, if in particular {$c=b-a^2 > 0$},  we also have the following energy estimate. 
\begin{theorem}\label{theo:stable_interface_c0}
Consider the two-domain formulation, \eqref{eq:subdomain_1}--\eqref{eq:subdomain_2} with $a, b \in \mathbb{R}$, $b>0$ and {$c=b-a^2 > 0$}, 
with the interface condition \eqref{eq:interface_condition} or \eqref{eq:interface_condition_proper}.
 We have
\begin{align*}
\frac{d}{dt} E_c(t) =0, \quad E_c(t) = E_c^-(t) + E_c^+(t).
\end{align*}
\end{theorem}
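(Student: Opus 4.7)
The plan is to mirror the single-domain argument of Theorem \ref{theo:energy_estimate_continuous_c0}, applied on each subdomain $\Omega_{\pm}$ separately, and then to verify that the contributions at the interface $x_I$ cancel under the prescribed interface conditions. The key preliminary observation is that, when written in terms of $c = b - a^2$, the flux simplifies to $F(u) = a\,\partial u/\partial t + c\,\partial u/\partial x$, so the boundary quantity appearing in Theorem \ref{theo:energy_estimate_continuous_c0} is exactly $2\,(\partial u/\partial t) F(u)$.

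First I would rerun the proof of Theorem \ref{theo:energy_estimate_continuous_c0} on each subdomain $\Omega_{\pm}$ by multiplying \eqref{eq:subdomain_1} and \eqref{eq:subdomain_2} by $\partial u^{\pm}/\partial t$, integrating over $\Omega_{\pm}$, and applying integration by parts. This yields, exactly as in \eqref{ce5_c00},
\begin{align*}
\frac{d}{dt} E_c^{\pm}(t) = 2\frac{\partial u^{\pm}}{\partial t}\left(a \frac{\partial u^{\pm}}{\partial t} + c\frac{\partial u^{\pm}}{\partial x}\right)\bigg|_{\partial \Omega_{\pm}},
\end{align*}
where $\partial \Omega_-$ contributes $+$ at $x_I$ and $-$ at $B_-$, and $\partial \Omega_+$ contributes $+$ at $B_+$ and $-$ at $x_I$. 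As in Theorem \ref{theo:stable_interface}, the contributions at the external boundaries $B_{\pm}$ vanish (e.g.\ for Cauchy data with compact support or periodic data). Summing the two subdomain identities therefore leaves
\begin{align*}
\frac{d}{dt} E_c(t) = -2\,\lJump \frac{\partial u}{\partial t}\left(a \frac{\partial u}{\partial t} + c\frac{\partial u}{\partial x}\right) \rJump \bigg|_{x_I}.
\end{align*}

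The last step is to show the interface jump vanishes under \eqref{eq:interface_condition}, or equivalently \eqref{eq:interface_condition_proper}. Condition \eqref{eq:interface_condition} reads $\lJump \partial u/\partial t - a\,\partial u/\partial x \rJump = 0$ and $\lJump F(u)\rJump = \lJump a\,\partial u/\partial t + c\,\partial u/\partial x \rJump = 0$, with $a$ continuous at $x_I$. Treating these as two linear equations in the jumps $\lJump \partial u/\partial t \rJump$ and $\lJump \partial u/\partial x \rJump$, the determinant of the system is $a \cdot a - (-a)c \cdot 1 = a^2 + c = b > 0$, hence both jumps vanish. Consequently $\partial u/\partial t$ and $\partial u/\partial x$ are each continuous across $x_I$, and the interface term reduces to a single-valued expression multiplied by $\lJump 1 \rJump = 0$, giving $dE_c/dt = 0$.

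The only nontrivial step is the algebraic verification that the two interface conditions together force continuity of both $\partial u/\partial t$ and $\partial u/\partial x$; this is where the sign hypothesis $c > 0$ is not actually needed in isolation — it is the stronger condition $b = a^2 + c > 0$ that makes the $2\times 2$ system invertible. Everything else is bookkeeping on the orientation of boundary terms at $x_I$ so that the two subdomain contributions assemble into a true jump. Once that is done, the identity follows without further computation.
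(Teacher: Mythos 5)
Your proposal is correct and follows essentially the same route as the paper: apply the single-domain identity of Theorem \ref{theo:energy_estimate_continuous_c0} on each subdomain, sum, and cancel the interface contribution using \eqref{eq:interface_condition}; your explicit verification that the two conditions force $\lJump \partial u/\partial t\rJump = \lJump \partial u/\partial x\rJump = 0$ (via the $2\times 2$ system with determinant $a^2 + c = b > 0$) is a detail the paper only implicitly covers through the stated equivalence with \eqref{eq:interface_condition_proper}. One cosmetic slip: your written determinant expression $a\cdot a - (-a)c\cdot 1$ evaluates to $a^2 + ac$, not $a^2 + c$; the correct computation for the system $\bigl(\begin{smallmatrix}1 & -a\\ a & c\end{smallmatrix}\bigr)$ is $1\cdot c - (-a)\cdot a = a^2 + c = b$, so the conclusion stands.
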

\begin{proof}
Consider 
\begin{align}\label{eq:integrate_by_parts_0_c0_pm}
\left(\frac{\partial u}{\partial t}, \frac{\partial^2 u}{\partial t^2} \right)_{\Omega_{\pm}}  + \left(\frac{\partial^2 u}{\partial x\partial t}, \frac{\partial u}{\partial x} \right)_{c\Omega_{\pm}} = \frac{\partial u}{\partial t}\left(a \frac{\partial u}{\partial t} + c\frac{\partial u}{\partial x}\right)\Big|_{B}. 
\end{align}
 Add the transpose of the product and collect contributions from both sides of the interface. Enforcing the interface condition  \eqref{eq:interface_condition} or \eqref{eq:interface_condition_proper} gives
\begin{align*}
\frac{d}{dt} E_c(t)    =   0 .
\end{align*}
\end{proof}

\subsection{Well-posed boundary conditions}
We will now consider the bounded domain $\Omega = [B_{-}, B_{+}]$ and analyse well-posed boundary conditions.
When analysing boundary conditions, it is convenient to use the form \eqref{BC_lambda} for the boundary contribution. 
We are primarily interested in energy stable boundary conditions. That is, with homogeneous boundary data, the boundary conditions should be such that the boundary term is never positive, $\mathrm{BT}_s(t) \le 0$.  Expanding the boundary terms in  \eqref{BC_lambda}, we have
\begin{equation}\label{BC_lambda_0}
\begin{split}
\mathrm{BT}_s(t) &= \left(\frac{\lambda_1}{2}\left(\frac{\partial u}{\partial t} - \lambda_2\frac{\partial u}{\partial x} \right)^2 + \frac{\lambda_2}{2}\left(\frac{\partial u}{\partial t} - \lambda_1\frac{\partial u}{\partial x} \right)^2  \right)\Big|_{x =B_{+}}\\
&-\left(\frac{\lambda_1}{2}\left(\frac{\partial u}{\partial t} - \lambda_2\frac{\partial u}{\partial x} \right)^2 + \frac{\lambda_2}{2}\left(\frac{\partial u}{\partial t} - \lambda_1\frac{\partial u}{\partial x} \right)^2\right)\Big|_{x =B_{-}}
\end{split}
\end{equation}
Below we consider two different cases, depending on the sign of $c=b-a^2$.

\subsubsection{Case 1:  $ c=-\lambda_1\lambda_2  > 0$}
When $c> 0$, we have $\lambda_1 > 0$ and $\lambda_2 < 0$ for any $a \in \mathbb{R}$. Consequently, for $\mathrm{BT}_s(t) \leq 0$, we have the boundary conditions
\begin{align}\label{eq:BC_case1}
\frac{\partial u}{\partial t} - \lambda_1\frac{\partial u}{\partial x}  = g_1(t), \quad x = B_{-}, \qquad \frac{\partial u}{\partial t} - \lambda_2\frac{\partial u}{\partial x}  = g_2(t), \quad x = B_{+}.
\end{align}
\begin{theorem}\label{theo:energy_estimate_continuous_1}
The IVBP \eqref{1deqn} and \eqref{eq:BC_case1} satisfies the energy estimate
\begin{align*}
&\frac{d}{dt} E_b(t)  - C_{ab} E_b(t) + \mathrm{BT}_1 \le  \frac{\lambda_1}{2}  g^2_2(t)-\frac{\lambda_2}{2}  g^2_1(t), \\
&  \mathrm{BT}_1= \left(\frac{\lambda_1}{2}\left(\frac{\partial u}{\partial t} - \lambda_2\frac{\partial u}{\partial x} \right)^2\Big|_{B_{-}} -\frac{\lambda_2}{2}\left(\frac{\partial u}{\partial t} - \lambda_1\frac{\partial u}{\partial x} \right)^2\Big|_{B_{+}} \right)\ge 0.
\end{align*}
\end{theorem}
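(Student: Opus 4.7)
The estimate will follow directly from Theorem~\ref{theo:energy_estimate_continuous} combined with the alternative characteristic form of the boundary term recorded in~\eqref{BC_lambda_0}, so essentially all the analytical work is already done; what remains is a careful bookkeeping of signs after substituting the boundary conditions~\eqref{eq:BC_case1}.

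First, I would start from the inequality
\[
\frac{d}{dt} E_b(t) \le C_{ab}E_b(t) + \mathrm{BT}_s(t)
\]
given by Theorem~\ref{theo:energy_estimate_continuous}, and replace $\mathrm{BT}_s(t)$ by its expanded characteristic form~\eqref{BC_lambda_0}, which displays the boundary term as a signed sum of four squared characteristic combinations $\partial_t u - \lambda_1 \partial_x u$ and $\partial_t u - \lambda_2 \partial_x u$ weighted by $\lambda_1/2$ or $\lambda_2/2$ and evaluated at the two endpoints. The crucial structural observation for this case is that $c = -\lambda_1\lambda_2 > 0$ together with $\lambda_1 > \lambda_2$ forces $\lambda_1 > 0$ and $\lambda_2 < 0$, which pins down the sign of every one of the four contributions.

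Next, I would plug the boundary data~\eqref{eq:BC_case1} into the two squares for which the data are prescribed. At $x = B_+$ the square $(\partial_t u - \lambda_2 \partial_x u)^2$ becomes $g_2^2(t)$ and, carrying its coefficient $\lambda_1/2 > 0$, contributes $\tfrac{\lambda_1}{2} g_2^2(t)$. At $x = B_-$ the square $(\partial_t u - \lambda_1 \partial_x u)^2$ becomes $g_1^2(t)$ and, after the overall minus sign associated with $B_-$ in~\eqref{BC_lambda_0}, contributes $-\tfrac{\lambda_2}{2} g_1^2(t)$, which is non-negative since $\lambda_2 < 0$. These two substituted pieces assemble exactly into the right-hand side $\tfrac{\lambda_1}{2} g_2^2(t) - \tfrac{\lambda_2}{2} g_1^2(t)$. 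The two remaining, \emph{un-substituted} squares are the dissipative part: at $B_+$ the square $(\partial_t u - \lambda_1 \partial_x u)^2$ carries the non-positive coefficient $\lambda_2/2$, and at $B_-$ the square $(\partial_t u - \lambda_2 \partial_x u)^2$ carries the non-positive coefficient $-\lambda_1/2$. Moving these two non-positive contributions from the right-hand side to the left-hand side produces precisely the quantity $\mathrm{BT}_1$ stated in the theorem.

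Finally, I would verify $\mathrm{BT}_1 \ge 0$ by inspection: the prefactor $\lambda_1/2$ of the $B_-$ square is positive and the prefactor $-\lambda_2/2$ of the $B_+$ square is positive, so both squared terms contribute non-negatively and the claim $\mathrm{BT}_1 \ge 0$ is immediate. Collecting the three pieces then yields the stated inequality. There is no genuine technical obstacle in this argument; once~\eqref{BC_lambda_0} has split $\mathrm{BT}_s(t)$ along the characteristic directions $\lambda_1, \lambda_2$, the boundary conditions~\eqref{eq:BC_case1} are exactly the ones that convert two of the four squares into pure data while leaving the other two as dissipation. The only real risk is a sign slip, which is why I would organise the substitution around the explicit characteristic decomposition rather than the original flux-based form~\eqref{BC}.
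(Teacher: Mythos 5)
Your proposal is correct and is exactly the argument the paper intends: the paper itself states that Theorem~\ref{theo:energy_estimate_continuous_1} follows from Theorem~\ref{theo:energy_estimate_continuous} together with the characteristic form~\eqref{BC_lambda_0} of the boundary term, and your sign bookkeeping (using $\lambda_1>0>\lambda_2$ to split the four squares into two data terms and two dissipative terms forming $\mathrm{BT}_1$) fills in the omitted details correctly.
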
 

Similarly, we also have the energy estimate with the discrete energy $E_c$. 
\begin{theorem}\label{theo:energy_estimate_continuous_1_c0}
The IVBP \eqref{1deqn} and \eqref{eq:BC_case1} satisfies the energy estimate
\begin{align*}
&\frac{d}{dt} E_c(t)  + \mathrm{BT}_1 =  \frac{\sqrt{b}}{2}g_1^2(t) + \frac{\sqrt{b}}{2}g_2^2(t), \\
&\mathrm{BT}_1= \frac{\sqrt{b}}{2}\left(\left(\frac{\partial u}{\partial t}(B_{-}, t)\right)^2 +\left(\frac{\partial u}{\partial t}(B_{-}, t)-g_1(t)\right)^2\right) + \frac{\sqrt{b}}{2}\left(\left(\frac{\partial u}{\partial t}(B_{+}, t)\right)^2 +\left(\frac{\partial u}{\partial t}(B_{+}, t)-g_2(t)\right)^2 \right).
\end{align*}
\end{theorem}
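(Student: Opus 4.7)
The plan is to follow the same pattern used for Theorem \ref{theo:energy_estimate_continuous_c0}, but now replace the boundary terms that were left unevaluated there with their expressions coming from the boundary conditions \eqref{eq:BC_case1}. Starting point: Theorem \ref{theo:energy_estimate_continuous_c0} already gives
$$
\frac{d}{dt} E_c(t)=\big[2u_t(a u_t + c u_x)\big]_{x=B_+}-\big[2u_t(a u_t+c u_x)\big]_{x=B_-},
$$
so everything reduces to a pointwise algebraic identity at each endpoint.

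First I would eliminate the spatial derivative using the boundary data. At $x=B_-$ the condition $u_t-\lambda_1 u_x=g_1$ gives $u_x=(u_t-g_1)/\lambda_1$; at $x=B_+$ the condition $u_t-\lambda_2 u_x=g_2$ gives $u_x=(u_t-g_2)/\lambda_2$. Using $a=(\lambda_1+\lambda_2)/2$ and $c=-\lambda_1\lambda_2$, a short calculation reduces the pointwise flux $2u_t(au_t+cu_x)$ at each boundary to a quadratic form in $u_t$ and the single data $g_1$ or $g_2$, with coefficients expressible in terms of $\sqrt{b}$ alone.

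Next I would complete the square in these reduced boundary expressions, using the elementary identities $u_t^2+(u_t-g)^2=2u_t^2-2u_tg+g^2$ and the decomposition of a bilinear term $u_t g$ through $\pm(u_t\pm g)^2$. The goal is to isolate, at each endpoint, a non-negative combination of $u_t(B_\pm,t)^2$ and $(u_t(B_\pm,t)-g_{1,2})^2$ weighted by $\sqrt{b}/2$ (this will collect into $\mathrm{BT}_1\ge 0$), plus a residual proportional to $g_{1,2}^2$ alone (this will collect into the right-hand side $\frac{\sqrt{b}}{2}(g_1^2+g_2^2)$). The signs of $\lambda_1>0$ and $\lambda_2<0$, guaranteed by the Case~1 hypothesis $c=-\lambda_1\lambda_2>0$, are what make the square completions come out with the correct sign so that $\mathrm{BT}_1$ is genuinely non-negative; this is the physical manifestation of the fact that the boundary conditions specify the incoming characteristic at each endpoint.

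Summing the two boundary contributions then yields the claimed identity. The main obstacle is bookkeeping: the flux $2u_t(au_t+cu_x)$ mixes the variable $a$, the wavespeed $\sqrt b$, and both characteristic speeds $\lambda_1,\lambda_2$, so one must be careful to use the relations $\lambda_1+\lambda_2=2a$, $\lambda_1-\lambda_2=2\sqrt b$ and $-\lambda_1\lambda_2=c$ in exactly the right places, and to pair each square-completion on the correct side of the domain so that the cross-terms cancel against the forcing. No new integration by parts or functional-analytic ingredient is required beyond what is already in the proof of Theorem \ref{theo:energy_estimate_continuous_c0}; the proof is purely an algebraic reorganisation of the boundary flux under the incoming-characteristic boundary conditions \eqref{eq:BC_case1}, exactly parallel to how Theorem \ref{theo:energy_estimate_continuous_1} is obtained from Theorem \ref{theo:energy_estimate_continuous} via the form \eqref{BC_lambda}.
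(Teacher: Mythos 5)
Your strategy coincides with the paper's: the authors prove this theorem only by the remark that it ``follows the same steps'' as Theorem \ref{theo:energy_estimate_continuous_c0}, i.e.\ substitute the boundary conditions \eqref{eq:BC_case1} into the boundary flux $2u_t(au_t+cu_x)\big|_B$ of \eqref{ce5_c0} and complete squares, which is exactly your plan. The gap is in the step you describe as ``a short calculation reduces the pointwise flux \dots to a quadratic form \dots with coefficients expressible in terms of $\sqrt b$ alone'': that claim is false once $a\neq 0$. Writing $u_t,u_x$ for the derivatives at an endpoint and using $a-\lambda_1=-\sqrt b$, $a-\lambda_2=\sqrt b$, $c=-\lambda_1\lambda_2$, the elimination of $u_x$ gives
\begin{align*}
2u_t\left(au_t+cu_x\right)\Big|_{B_+}=-2\sqrt b\,u_t^2+2\lambda_1 u_t g_2,\qquad
-2u_t\left(au_t+cu_x\right)\Big|_{B_-}=-2\sqrt b\,u_t^2-2\lambda_2 u_t g_1,
\end{align*}
so the cross terms are weighted by $\lambda_1=a+\sqrt b$ and $-\lambda_2=\sqrt b-a$, not by $\sqrt b$. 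Completing the square therefore produces, at $B_+$, the expression $-\sqrt b\,u_t^2-\sqrt b\left(u_t-\frac{\lambda_1}{\sqrt b}g_2\right)^2+\frac{\lambda_1^2}{\sqrt b}g_2^2$ (and the analogue with $\lambda_2$ at $B_-$), which collapses to the combination $\frac{\sqrt b}{2}\left(u_t^2+(u_t-g)^2\right)$ plus the forcing $\frac{\sqrt b}{2}g^2$ of the statement only when $\lambda_1=-\lambda_2=\sqrt b$, i.e.\ $a=0$ --- and even then only up to an overall factor of $2$ (take $g_1=g_2=0$: the flux contributes $-2\sqrt b\,u_t^2$ per endpoint while $\mathrm{BT}_1$ supplies $\sqrt b\,u_t^2$).

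So the proof as proposed does not terminate at the stated identity for a genuine shift. Carried out honestly, your computation yields a correct but different energy identity, with boundary quadratics in $u_t$ and $u_t\mp\frac{\lambda_{1,2}}{\sqrt b}g$ and data terms weighted by $\lambda_{1,2}^2/\sqrt b$; equivalently, in the characteristic variables $w_{1,2}=u_t-\lambda_{1,2}u_x$ underlying \eqref{BC_lambda_0} the flux is exactly $\frac{1}{2\sqrt b}\left(\lambda_1^2w_2^2-\lambda_2^2w_1^2\right)\big|_B$, which makes the correct weights visible at once. Either route still delivers a non-negative boundary term and hence the stability conclusion, but you must keep the $\lambda$-dependence: the reduction to coefficients in $\sqrt b$ alone is the step that fails, and asserting it would not reproduce the formula as printed.
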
 

Using the form \eqref{BC_lambda_0}, the proofs of Theorem \ref{theo:energy_estimate_continuous_1} and \ref{theo:energy_estimate_continuous_1_c0} follow the same steps as the proofs for Theorem \ref{theo:energy_estimate_continuous} and \ref{theo:energy_estimate_continuous_c0}, respectively.

\subsubsection{Case 2: $ c< 0$}
We consider $a > 0$ and $ c< 0$, which implies $\lambda_1 > \lambda_2 > 0$.  For $\mathrm{BT}_s(t) \leq 0$, we  need to impose two boundary conditions at $x = B_{+}$, and  no boundary condition at $x = B_{-}$.  The boundary conditions are
\begin{align}\label{eq:BC_case2}
\frac{\partial u}{\partial t} - \lambda_1\frac{\partial u}{\partial x}  = g_1(t), \quad x = B_{+}, \qquad \frac{\partial u}{\partial t} - \lambda_2\frac{\partial u}{\partial x}  = g_2(t), \quad x = B_{+},
\end{align}
or equivalently
\begin{align}\label{eq:BC_case2_0}
u = f_1(t), \quad x = B_{+}, \qquad \frac{\partial u}{\partial x}  = f_2(t), \quad x = B_{+},
\end{align}
where
$$
g_1(t) = \frac{d}{dt} f_1(t) - \lambda_1f_2(t), \quad g_2(t) = \frac{d}{dt}  f_1(t) - \lambda_2f_2(t).
$$
We have
\begin{theorem}\label{theo:energy_estimate_continuous_2}
The IVBP  \eqref{1deqn} and \eqref{eq:BC_case2} satisfies the energy estimate
\begin{align*}
&\frac{d}{dt} E_b(t)  - C_{ab} E_b(t) + \mathrm{BT}_2 \le G(t), \\
&G(t) =  \frac{\lambda_2}{2}g_1^2(t) + \frac{\lambda_1}{2}g_2^2(t) , \quad \mathrm{BT}_2=  \left(\frac{\lambda_1}{2}\left(\frac{\partial u}{\partial t} - \lambda_2\frac{\partial u}{\partial x} \right)^2 + \frac{\lambda_2}{2}\left(\frac{\partial u}{\partial t} - \lambda_1\frac{\partial u}{\partial x} \right)^2  \right)\Big|_{x=B_{-}} \ge 0.
\end{align*}
\end{theorem}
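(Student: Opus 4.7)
The plan is to reduce the statement directly to Theorem \ref{theo:energy_estimate_continuous} by inserting the boundary conditions \eqref{eq:BC_case2} into the equivalent diagonalised form \eqref{BC_lambda_0} of the boundary term. I would start from the generic energy estimate $\frac{d}{dt} E_b(t) \le C_{ab} E_b(t) + \mathrm{BT}_s(t)$ and rewrite $\mathrm{BT}_s(t)$ as a difference of two non-negative quadratic expressions at the two boundaries, exactly as given in \eqref{BC_lambda_0}.

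Next, at $x = B_{+}$ I would substitute the boundary conditions $\frac{\partial u}{\partial t} - \lambda_1 \frac{\partial u}{\partial x} = g_1(t)$ and $\frac{\partial u}{\partial t} - \lambda_2 \frac{\partial u}{\partial x} = g_2(t)$ directly into the two squared factors, collapsing the $B_{+}$ contribution to $\frac{\lambda_1}{2} g_2^2(t) + \frac{\lambda_2}{2} g_1^2(t) = G(t)$. At $x = B_{-}$ no boundary condition is imposed, so the contribution is simply $-\mathrm{BT}_2$, where $\mathrm{BT}_2$ denotes the same quadratic combination evaluated at $B_{-}$. Because $\lambda_1 > \lambda_2 > 0$ (which follows from $a > 0$ and $c = b - a^2 < 0$), both coefficients in $\mathrm{BT}_2$ are strictly positive, so $\mathrm{BT}_2 \ge 0$ as claimed.

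Combining the two boundary contributions gives $\mathrm{BT}_s(t) = G(t) - \mathrm{BT}_2$. Substituting this into the bound from Theorem \ref{theo:energy_estimate_continuous} and moving $\mathrm{BT}_2$ to the left-hand side yields the stated estimate. The main conceptual point, which explains why the boundary conditions are placed only at $B_{+}$, is that when both $\lambda_i$ are positive, the $B_{-}$ term in \eqref{BC_lambda_0} is automatically dissipative (it enters with a negative sign and non-negative magnitude), so no data needs to be specified there; imposing two conditions at $B_{+}$ is then exactly enough to convert the otherwise sign-indefinite $B_{+}$ contribution into a harmless data-only forcing $G(t)$. No new analytical machinery beyond Theorem \ref{theo:energy_estimate_continuous} and the algebraic identity \eqref{BC_lambda_0} is required, so the only real obstacle is careful bookkeeping of the signs and of which squared factor each boundary condition annihilates.
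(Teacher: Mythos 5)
Your proof is correct and is exactly the argument the paper intends (the paper states this theorem without an explicit proof, but the remark preceding Theorems \ref{theo:energy_estimate_continuous_1} and \ref{theo:energy_estimate_continuous_1_c0} makes clear the intended route is to combine Theorem \ref{theo:energy_estimate_continuous} with the diagonalised boundary form \eqref{BC_lambda_0}, which is precisely what you do). The substitution of $g_1, g_2$ into the correct squared factors at $B_+$, the sign analysis at $B_-$ using $\lambda_1 > \lambda_2 > 0$, and the final rearrangement all check out.
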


\begin{remark}
For the other case  $a < 0$ and $ c< 0$, the boundary conditions shall be reversed: two boundary conditions at $x = B_{-}$, and there is no boundary condition at $x = B_{+}$. That is
\begin{align}\label{eq:BC_case3}
\frac{\partial u}{\partial t} - \lambda_1\frac{\partial u}{\partial x}  = g_1(t), \quad x = B_{-}, \qquad \frac{\partial u}{\partial t} - \lambda_2\frac{\partial u}{\partial x}  = g_2(t), \quad x = B_{-},
\end{align}
or equivalently
\begin{align}\label{eq:BC_case3_0}
u = f_1(t), \quad x = B_{-}, \qquad \frac{\partial u}{\partial x}  = f_2(t), \quad x = B_{-},
\end{align}
where
$$
g_1(t) = \frac{d}{dt} f_1(t) - \lambda_1f_2(t), \quad g_2(t) = \frac{d}{dt}  f_1(t) - \lambda_2f_2(t).
$$
\end{remark}

The above theorems state relations between the energy change rate and the energy. To derive a bound for the energy itself, we apply Gronwall's lemma to obtain the following result. 

\begin{theorem}\label{theo:energy_estimate_continuous_2_other}
Consider  the continuous energy $E(t)>0$  and the energy estimate 
\begin{align}\label{eq:energy_estimate_continuous_00}
&\frac{d}{dt} E(t)  - C_{ab} E(t) + \mathrm{BT}(t) \le G(t), \quad \mathrm{BT}(t) \ge 0, \quad C_{ab} \ge 0.
\end{align}
We have
\begin{align}\label{eq:energy_estimate_continuous_01}
&\frac{d}{dt} E(t)  - C_{ab} E(t) +  \eta(t)E(t) \le G(t), \quad \eta(t) = \frac{\mathrm{BT}(t)}{E(t)} \ge \eta_0 > 0,
\end{align}
and
\begin{align}\label{eq:energy_estimate_continuous_02}
& E(t)  \le e^ {\gamma t} E(0) + \int_0^te^ {\gamma (t-\tau)}  G(\tau) d\tau, \quad \gamma = C_{ab} -\eta_0.
\end{align}
\end{theorem}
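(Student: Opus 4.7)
The plan is to derive the integral bound in three short steps: rewrite the boundary term as a multiple of the energy, exploit the hypothesized uniform lower bound on that multiplier, and then apply a standard integrating-factor (Gronwall) argument to convert the resulting differential inequality into an integral bound.

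First I would define $\eta(t) := \mathrm{BT}(t)/E(t)$, which is well-defined since by hypothesis $E(t) > 0$, and nonnegative since $\mathrm{BT}(t) \ge 0$. Substituting the identity $\mathrm{BT}(t) = \eta(t) E(t)$ into the given estimate \eqref{eq:energy_estimate_continuous_00} immediately yields \eqref{eq:energy_estimate_continuous_01}. The additional qualitative hypothesis $\eta(t) \ge \eta_0 > 0$ stated in the theorem is what turns the instantaneous ratio into a uniform lower bound; this is part of the assumed data and does not require proof at this stage (it is a structural assumption on the boundary dissipation being strictly coercive relative to the energy).

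Next I would use $\eta(t) \ge \eta_0$ in \eqref{eq:energy_estimate_continuous_01} to obtain the linear differential inequality
\begin{equation*}
\frac{d}{dt} E(t) \;\le\; \bigl(C_{ab} - \eta_0\bigr) E(t) + G(t) \;=\; \gamma\, E(t) + G(t).
\end{equation*}
Multiplying through by the integrating factor $e^{-\gamma t}$ rewrites this as
\begin{equation*}
\frac{d}{dt}\bigl(e^{-\gamma t} E(t)\bigr) \;\le\; e^{-\gamma t} G(t),
\end{equation*}
and integrating from $0$ to $t$ followed by multiplication by $e^{\gamma t}$ produces exactly \eqref{eq:energy_estimate_continuous_02}.

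The argument is entirely routine; there is no real obstacle once the hypothesis $\eta(t) \ge \eta_0 > 0$ is accepted. The only conceptual subtlety worth flagging is the direction of the sign: because the boundary term appears with a favorable sign on the left-hand side of \eqref{eq:energy_estimate_continuous_00}, the dissipation rate $\eta_0$ enters as a subtraction in the growth constant $\gamma = C_{ab} - \eta_0$, so a sufficiently coercive boundary term can in principle render $\gamma \le 0$ and yield a uniform-in-time bound on $E(t)$ from finite forcing $G$. This observation is not needed for the proof itself but motivates why the form \eqref{eq:energy_estimate_continuous_02} is the useful one to state.
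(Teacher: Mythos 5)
Your proposal is correct and follows essentially the same route the paper intends: the paper supplies no written proof beyond the remark that the result follows from Gronwall's lemma, and your integrating-factor argument is exactly that, with the substitution $\mathrm{BT}(t)=\eta(t)E(t)$ and the standing assumption $\eta(t)\ge\eta_0>0$ handled as the theorem's (implicit) hypothesis, which is the right reading since it cannot be deduced from $\mathrm{BT}(t)\ge 0$ alone.
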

Theorem \ref{theo:energy_estimate_continuous_2_other} proves the strong well-posedness of the IBVP, where $E$ could either be $E_b$ or $E_c$.

\section{Discretisation in space}
 In this section we present a multi-element semi-discrete discontinuous spectral element approximation of the shifted wave equation in a bounded domain, and prove numerical stability and conservation.
 We will first present the numerical interface treatment and proceed later to numerical treatment of external boundaries.

\subsection{Numerical interface treatments}\label{sec:numerical_interface_treatment}
Here, we will derive a conservative and energy stable interface treatment for the shifted wave equation.
For simplicity we will focus on one interface shared by two spectral elements, but the method and analysis can be easily extended to more than two elements and multiple  interfaces. 
We consider the two elements model  $\Omega = \Omega_{-} \cup \Omega_{+}$ with an interface at $x_I$.
We map each element to a reference element, $\Omega_{\pm} \to \widetilde{\Omega} =[-1, 1]$.

Let $\mathbf{U} = \begin{pmatrix}  \mathbf{u}^-  \\  \mathbf{u}^+   \end{pmatrix}$ denote the degrees of freedom to be evolved. We introduce the weight matrix and the discrete derivative operator
$$
\mathbf{H} =  \begin{pmatrix} H_x & \mathbf{0} \\ \mathbf{0} & H_x \end{pmatrix}, \quad {\mathbf{D}} = \begin{pmatrix} D_x& \mathbf{0} \\ \mathbf{0} & D_x \end{pmatrix},
$$
and the discrete scalar product and the discrete norm defined by $\mathbf{H}$
\begin{align}\label{eq:disc_scalar_product}
\langle\mathbf{U} , \mathbf{V} \rangle_{\mathbf{H}} = \mathbf{V}^T \mathbf{H} \mathbf{U} , \quad \|\mathbf{U} \|_{\mathbf{H} }^2 = \langle\mathbf{U} , \mathbf{U} \rangle_{\mathbf{H}}.
\end{align}
The Galerkin spectral element approximation of the two-domain formulation, \eqref{eq:subdomain_1}--\eqref{eq:subdomain_2}  is
\begin{equation}
\begin{split}
        \label{eq:InterfaceSemiDiscrete_shift_no_penalty}
           \frac{{d}}{{dt}}\left( \frac{{d}}{{dt}} \mathbf{U}   - \mathbf{a}  {\mathbf{D}}\mathbf{U}  \right) &- \mathbf{D}\left(\mathbf{a}\left( \frac{{d}}{{dt}} \mathbf{U}   - \mathbf{a} {\mathbf{D}}\mathbf{U}  \right) + \mathbf{b} \mathbf{D}\mathbf{U} \right) = 0,
\end{split}
\end{equation}
where $\mathbf{a}$ and $\mathbf{b}$ are the coefficients $a(x)$ and $b(x)$ evaluated on the quadrature nodes. 
In \eqref{eq:InterfaceSemiDiscrete_shift_no_penalty}, note that we have only replaced the continuous derivative operators with spectral difference operators and we  are yet to implement the interface conditions  \eqref{eq:interface_condition} or \eqref{eq:interface_condition_proper}. The numerical solutions in the two elements are independent and  unconnected. The solutions will be connected across the element interface through a numerical flux. 
We introduce the interface matrices
\begin{align*}
  \mathbf{\widehat{B}}  = \begin{pmatrix} 
                         \er \er^T & -\er \el^T \\
                          \el \er^T  & -\el \el^T 
                         \end{pmatrix},
  \quad
\el =\left(1, 0, \cdots, 0, 0\right)^T, 
\quad 
\er =\left(0, 0, \cdots, 0, 1\right)^T.
\end{align*}
Note that
\begin{align*}
  \mathbf{\widehat{B}} \mathbf{U} 
  =
  \begin{pmatrix} 
  0\\
  \vdots\\
  0\\
  u_{P+1}^{-}- u_{1}^{+}\\
   u_{P+1}^{-}- u_{1}^{+}\\
    0\\
      \vdots\\
      0
  \end{pmatrix}.
  \end{align*}
  If the solution is continuous across the interface $\lJump u\rJump: = u_{1}^{+}- u_{P+1}^{-} = 0$, then we have $\mathbf{\widehat{B}} \mathbf{U} = \mathbf{0}$.

To begin, we introduce the penalised difference operator $\widetilde{\mathbf{D}} $ and the global  boundary operator  $\widetilde{\mathbf{B}}$ defined by 
\begin{align}\label{eq:penalise_differential}
\widetilde{\mathbf{D}} = {\mathbf{D}} - \gamma_0 \mathbf{H}^{-1} \mathbf{\widehat{B}}, \quad \widetilde{\mathbf{B}} =\begin{pmatrix} -\el \el^{T} & \mathbf{0} \\ \mathbf{0} & \er\er^{T} \end{pmatrix}.
\end{align}
If the solution is continuous across the interface, we have $\widetilde{\mathbf{D}}\mathbf{U} = {\mathbf{D}} \mathbf{U}$ for any $\gamma_0$.
The following lemma shows that the particular choice $\gamma_0=1/2$ makes $\widetilde{\mathbf{D}}$  anti-symmetric in the discrete scalar product defined by \eqref{eq:disc_scalar_product}.
\begin{lemma}\label{lem:anti_symmetry_D}
Consider the penalised difference operator $\widetilde{\mathbf{D}}$ defined in \eqref{eq:penalise_differential} and the grid functions $\mathbf{V}, \mathbf{U} \in\mathbb{R}^{2P+2}$. If $\gamma_0 = 1/2$ the modified operator $\widetilde{\mathbf{D}}$ satisfies the SBP property
\begin{align}
\langle\mathbf{V} , \widetilde{\mathbf{D}} \mathbf{U} \rangle_{\mathbf{H}} + \langle  \widetilde{\mathbf{D}}\mathbf{V}, \mathbf{U} \rangle_{\mathbf{H}} = \mathbf{V}^T\widetilde{\mathbf{B}}\mathbf{U} =u_{P+1}^{+}v_{P+1}^{+} - u_{1}^{-}v_{1}^{-} = u(B_+)v(B_+) - u(B_-)v(B_-),
\end{align}
and if in particular we ignore contributions from external boundaries at $x =B_-, B_+$, ($v(B_{\pm}) =0$) we have
\begin{align}
\langle\mathbf{V} , \widetilde{\mathbf{D}} \mathbf{U} \rangle_{\mathbf{H}} + \langle  \widetilde{\mathbf{D}}\mathbf{V}, \mathbf{U} \rangle_{\mathbf{H}} = 0.
\end{align}
\end{lemma}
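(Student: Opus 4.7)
The plan is to unfold both sides of the identity using the single-element SBP property of $Q$ and then show that the penalty chosen by $\gamma_0=1/2$ exactly cancels the two spurious boundary contributions at the interface $x_I$, leaving only the contributions from the external boundaries $x=B_\pm$.

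First I would rewrite
\[
\langle\mathbf{V},\widetilde{\mathbf{D}}\mathbf{U}\rangle_{\mathbf{H}} + \langle\widetilde{\mathbf{D}}\mathbf{V},\mathbf{U}\rangle_{\mathbf{H}}
= \mathbf{V}^T\bigl(\mathbf{H}\mathbf{D}+(\mathbf{H}\mathbf{D})^T\bigr)\mathbf{U}
- \gamma_0\,\mathbf{V}^T\bigl(\widehat{\mathbf{B}}+\widehat{\mathbf{B}}^T\bigr)\mathbf{U},
\]
where I used that $\mathbf{H}$ is symmetric so that $\mathbf{H}^{-1}\widehat{\mathbf{B}}$, transported through the inner product, just contributes $\widehat{\mathbf{B}}^T$ on the other side. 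Since $\mathbf{H}\mathbf{D}$ is block-diagonal with blocks equal to $Q$, the single-element SBP identity $Q+Q^T=B=\er\er^T-\el\el^T$ gives
\[
\mathbf{V}^T\bigl(\mathbf{H}\mathbf{D}+(\mathbf{H}\mathbf{D})^T\bigr)\mathbf{U}
= \bigl(v_{P+1}^- u_{P+1}^- - v_1^- u_1^-\bigr)+\bigl(v_{P+1}^+ u_{P+1}^+ - v_1^+ u_1^+\bigr),
\]
which contains the desired external contributions together with two unwanted interface contributions at $x_I$, namely $v_{P+1}^- u_{P+1}^-$ and $-v_1^+ u_1^+$.

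Next I would compute $\widehat{\mathbf{B}}+\widehat{\mathbf{B}}^T$ directly from the block form; the off-diagonal rank-one blocks cancel and one obtains
\[
\widehat{\mathbf{B}}+\widehat{\mathbf{B}}^T
=\begin{pmatrix} 2\er\er^T & \mathbf{0}\\ \mathbf{0} & -2\el\el^T\end{pmatrix},
\qquad
\mathbf{V}^T\bigl(\widehat{\mathbf{B}}+\widehat{\mathbf{B}}^T\bigr)\mathbf{U}
= 2v_{P+1}^- u_{P+1}^- - 2v_1^+ u_1^+.
\]
Choosing $\gamma_0=1/2$ then subtracts exactly the two interface contributions produced above, leaving
\[
\langle\mathbf{V},\widetilde{\mathbf{D}}\mathbf{U}\rangle_{\mathbf{H}} + \langle\widetilde{\mathbf{D}}\mathbf{V},\mathbf{U}\rangle_{\mathbf{H}}
= v_{P+1}^+ u_{P+1}^+ - v_1^- u_1^-,
\]
which is precisely $\mathbf{V}^T\widetilde{\mathbf{B}}\mathbf{U}=u(B_+)v(B_+)-u(B_-)v(B_-)$. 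The second assertion follows immediately: if $v(B_\pm)=0$, i.e.\ $v_1^-=v_{P+1}^+=0$, the right-hand side vanishes.

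There is no real obstacle here; the only thing to be careful about is the bookkeeping for $\widehat{\mathbf{B}}+\widehat{\mathbf{B}}^T$, making sure the signs and the placement of $\el,\er$ in the two blocks match, and identifying which of the four endpoints correspond to the interface $x_I$ (namely $x_{P+1}^-$ and $x_1^+$) versus the physical boundaries (namely $x_1^-=B_-$ and $x_{P+1}^+=B_+$). Once this matching is done, the cancellation for $\gamma_0=1/2$ is exact, showing that the penalty encodes a central flux at the interior interface while leaving the external boundary terms untouched for later boundary-condition treatment.
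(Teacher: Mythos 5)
Your proposal is correct and follows essentially the same route as the paper's proof: both reduce the claim to the single-element SBP identity $Q+Q^T=B=\er\er^T-\el\el^T$ together with the observation that $\widehat{\mathbf{B}}+\widehat{\mathbf{B}}^T=\mathrm{diag}(2\er\er^T,\,-2\el\el^T)$, so that the $\gamma_0=1/2$ penalty exactly removes the two interface contributions and leaves $\widetilde{\mathbf{B}}$. Your version simply symmetrises $\mathbf{H}\widetilde{\mathbf{D}}$ term by term rather than first rewriting $\widetilde{\mathbf{D}}$ as $\mathbf{H}^{-1}\bigl(-\mathbf{D}^T\mathbf{H}+\widetilde{\mathbf{B}}+\tfrac{1}{2}\widehat{\mathbf{B}}^T\bigr)$ as the paper does, and in fact spells out the cancellation more explicitly than the paper's rather terse argument.
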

\begin{proof}
Consider $\widetilde{\mathbf{D}}$ defined in \eqref{eq:penalise_differential} and simplify using the SBP property \eqref{sdoD}
\begin{equation}\label{eq:Discrete_operatorD}
\begin{split}
\widetilde{\mathbf{D}} &= \begin{pmatrix} D_x& \mathbf{0} \\ \mathbf{0} & D_x \end{pmatrix} - \frac{1}{2}\begin{pmatrix} H_x & \mathbf{0} \\ \mathbf{0} & H_x \end{pmatrix}^{-1} \mathbf{\widehat{B}} 
= \begin{pmatrix} H_x & \mathbf{0} \\ \mathbf{0} & H_x \end{pmatrix}^{-1}\left(\begin{pmatrix} -D_x^TH_x& \mathbf{0} \\ \mathbf{0} & -D_x^TH_x \end{pmatrix}+\begin{pmatrix} B& \mathbf{0} \\ \mathbf{0} & B \end{pmatrix} - \frac{1}{2} \mathbf{\widehat{B}}  \right)\\
&= \begin{pmatrix} H_x & \mathbf{0} \\ \mathbf{0} & H_x \end{pmatrix}^{-1}\left(\begin{pmatrix} -D_x^TH_x& \mathbf{0} \\ \mathbf{0} & -D_x^TH_x \end{pmatrix}+\begin{pmatrix} -\el \el^{T} & \mathbf{0} \\ \mathbf{0} & \er\er^{T} \end{pmatrix} + \frac{1}{2} \mathbf{\widehat{B}}^T  \right).
\end{split}
\end{equation}
Therefore, we have
\begin{align}
\langle\mathbf{V} , \widetilde{\mathbf{D}} \mathbf{U} \rangle_{\mathbf{H}} + \langle  \widetilde{\mathbf{D}}\mathbf{V}, \mathbf{U} \rangle_{\mathbf{H}} = u_{P+1}^{+}v_{P+1}^{+} - u_{1}^{-}v_{1}^{-} = u(B_+)v(B_+) - u(B_-)v(B_-) =0.
\end{align}
\end{proof}
 %
A consistent DSEM approximation of the two-domain formulation, \eqref{eq:subdomain_1}--\eqref{eq:subdomain_2} with the interface conditions \eqref{eq:interface_condition} or \eqref{eq:interface_condition_proper} is
\begin{equation}
\begin{split}
        \label{eq:InterfaceSemiDiscrete_shift}
           \frac{{d}}{{dt}}\left( \frac{{d}}{{dt}} \mathbf{U}   - \mathbf{a}  \widetilde{\mathbf{D}}\mathbf{U}  \right) &- \mathbf{D}\left(\mathbf{a}\left( \frac{{d}}{{dt}} \mathbf{U}   - \mathbf{a}  \widetilde{\mathbf{D}}\mathbf{U}  \right) + \mathbf{b} \mathbf{D}\mathbf{U} \right) + \tau_N \mathbf{H}^{-1}\mathbf{\widehat{B}} \left(\mathbf{a}\left( \frac{{d}}{{dt}} \mathbf{U}   - \mathbf{a}  \widetilde{\mathbf{D}}\mathbf{U}  \right) + \mathbf{b} \mathbf{D}\mathbf{U} \right) \\
          &+  \mathbf{H}^{-1} \left( \gamma_N\mathbf{D}^T \mathbf{b}\mathbf{\widehat{B}}  +  \tau_0\mathbf{\widehat{B}}^T \mathbf{b}\mathbf{H}^{-1}\mathbf{\widehat{B}}  \right)\mathbf{U} = 0.
\end{split}
\end{equation}
Note that for the exact solutions of the IBVP the interface conditions  \eqref{eq:interface_condition} or \eqref{eq:interface_condition_proper} are satisfied exactly, and the penalty terms in \eqref{eq:InterfaceSemiDiscrete_shift} vanish and we recover \eqref{eq:InterfaceSemiDiscrete_shift_no_penalty}. 
The  penalty terms with coefficients $\gamma_0, a$ (note that $\gamma_0$ is hidden in the operator $\widetilde{\mathbf{D}}$) are flux corrections due to advective transport, when $a\ne 0$, and the remaining penalty terms are flux corrections due to expanding pressure waves. 
Note that when $a \equiv 0$ we obtain the standard interior penalty method for the classical scalar wave equation. The interface treatment \eqref{eq:InterfaceSemiDiscrete_shift} can be viewed as  the extension of the classical interior penalty method to the shifted wave equation where the interactions of expanding pressure waves and transport phenomena are prominent.
We note however, unlike  the classical interior penalty method,  the penalty parameters derived in this study are non-dimensional and independent of the mesh and material parameters of the medium.
\begin{remark}
We note that the numerical method \eqref{eq:InterfaceSemiDiscrete_shift} does not require the introduction of any auxiliary variables. 
As opposed to the energy DG method \cite{Zhang2019} the numerical method \eqref{eq:InterfaceSemiDiscrete_shift} is purely explicit and does not require the inversion of any matrix.
In contrast to classical SBP finite difference methods the numerical method \eqref{eq:InterfaceSemiDiscrete_shift} is arbitrarily and spectrally accurate and maintains full accuracy within the element, that is, there is no loss of accuracy close to the boundaries \cite{Wang2017}.
\end{remark}

Next, we will determine the penalty parameters such that the scheme is conservative and energy stable.
We will prove that the penalty terms  $\tau_N = 1/2$, $\gamma_N = -1/2$,  and $\gamma_0 = 1/2$  ensure a conservative and stable numerical approximation.
To do this, we introduce the auxiliary variable $\mathbf{W}$, use the ultra-compatibiltiy property of the SBP operators  $D_x$, $D_x\left(bD_x\right)$,  and rewrite the semi-discrete approximation \eqref{eq:InterfaceSemiDiscrete_shift} as
\begin{align}
        \label{eq:InterfaceSemiDiscrete3}
          \frac{{d}}{{dt}} \mathbf{U} = \mathbf{a} \widetilde{\mathbf{D}} \mathbf{U}  +  \mathbf{W}, \quad \frac{{d}}{{dt}} \mathbf{W}  =   \widetilde{\mathbf{D}}\left( \mathbf{a} \mathbf{W}  \right)  +  \widetilde{\mathbf{D}}_2 \mathbf{U} , 
\end{align}
where 
\begin{align}
        \label{eq:Discrete_operatorD2}
 \widetilde{\mathbf{D}}_2=  \mathbf{H}^{-1} \left( -\mathbf{A}_b + \widetilde{\mathbf{B}}  \mathbf{b} \widetilde{\mathbf{D}} \right),
\end{align}
and
{\small
\begin{align}\label{eq:Discrete_operatorA}
\mathbf{A}_{b}&=\begin{pmatrix} D_x^T\mathbf{b}H_xD_x & \mathbf{0} \\ \mathbf{0} & D_x^T\mathbf{b}H_xD_x \end{pmatrix}  
          -\frac{1}{2}  \left(\mathbf{\widehat{B}}^T  \begin{pmatrix} \mathbf{b}D_x & \mathbf{0} \\ \mathbf{0} &  \mathbf{b}D_x \end{pmatrix}  +  \begin{pmatrix} D_x^T\mathbf{b}  & \mathbf{0} \\ \mathbf{0} & D_x^T\mathbf{b}  \end{pmatrix} \mathbf{\widehat{B}}  - 2\tau_0 \mathbf{\widehat{B}}^T\mathbf{b}\begin{pmatrix} H_x & \mathbf{0} \\ \mathbf{0} & H_x \end{pmatrix}^{-1}\mathbf{\widehat{B}} \right).
\end{align}
}

The operator $\mathbf{A}_{b}$ is related to $\widetilde{\mathbf{D}}$ by the following lemma.
\begin{lemma}\label{lem:symmetry_A}
Consider the discrete operators  $\widetilde{\mathbf{D}}_2$, $\mathbf{A}_{b}$ and $\widetilde{\mathbf{D}}$ defined in \eqref{eq:Discrete_operatorD2}--\eqref{eq:Discrete_operatorA} and \eqref{eq:penalise_differential}. For all $b(x) >0$, if $\gamma_0 = 1/2$
and  $\tau_0 = 1/4$, then the operators $\widetilde{\mathbf{D}}_2$ and $\widetilde{\mathbf{D}}$ are ultra-compatible SBP operators, and we have
 \begin{align*}
\widetilde{\mathbf{D}}_2=  \mathbf{H}^{-1} \left( -\mathbf{A}_b + \widetilde{\mathbf{B}}  \mathbf{b} \widetilde{\mathbf{D}} \right), \quad  \mathbf{A}_b = {\widetilde{\mathbf{D}}}^T \mathbf{b}\mathbf{H} \widetilde{\mathbf{D}}.
  \end{align*}
\end{lemma}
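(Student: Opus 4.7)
The plan is to prove the factorisation $\mathbf{A}_b = \widetilde{\mathbf{D}}^T \mathbf{b}\mathbf{H}\widetilde{\mathbf{D}}$ by a direct algebraic expansion of the right-hand side, and then to observe that this identity, combined with Lemma \ref{lem:anti_symmetry_D}, puts $\widetilde{\mathbf{D}}_2$ into the fully compatible SBP form of Definition \ref{Def:Compatible_SBP_Operator} with vanishing remainder, which is exactly the definition of ultra-compatibility.

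First I would substitute $\widetilde{\mathbf{D}} = \mathbf{D} - \gamma_0 \mathbf{H}^{-1}\mathbf{\widehat{B}}$ into $\widetilde{\mathbf{D}}^T \mathbf{b}\mathbf{H}\widetilde{\mathbf{D}}$ and expand term by term. The four resulting contributions simplify because $\mathbf{b}$ and $\mathbf{H}$ are block-diagonal with scalar entries and therefore commute; for instance $\mathbf{D}^T\mathbf{b}\mathbf{H}\mathbf{H}^{-1}\mathbf{\widehat{B}} = \mathbf{D}^T\mathbf{b}\mathbf{\widehat{B}}$ and $\mathbf{\widehat{B}}^T\mathbf{H}^{-1}\mathbf{b}\mathbf{H}\mathbf{D} = \mathbf{\widehat{B}}^T\mathbf{b}\mathbf{D}$, using $(\mathbf{H}^{-1})^T = \mathbf{H}^{-1}$. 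This produces
\[
\widetilde{\mathbf{D}}^T \mathbf{b}\mathbf{H}\widetilde{\mathbf{D}} = \mathbf{D}^T\mathbf{b}\mathbf{H}\mathbf{D} - \gamma_0\mathbf{D}^T\mathbf{b}\mathbf{\widehat{B}} - \gamma_0\mathbf{\widehat{B}}^T\mathbf{b}\mathbf{D} + \gamma_0^{2}\, \mathbf{\widehat{B}}^T\mathbf{b}\mathbf{H}^{-1}\mathbf{\widehat{B}}.
\]
A term-wise comparison with the definition \eqref{eq:Discrete_operatorA} of $\mathbf{A}_b$ then shows that the two cross boundary terms match if and only if $\gamma_0 = 1/2$, and that the quadratic boundary term matches precisely when $\tau_0 = \gamma_0^{2} = 1/4$. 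This establishes the desired factorisation.

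With the factorisation in hand, the defining relation \eqref{eq:Discrete_operatorD2} becomes
\[
\widetilde{\mathbf{D}}_2 = \mathbf{H}^{-1}\bigl(-\widetilde{\mathbf{D}}^T\mathbf{b}\mathbf{H}\widetilde{\mathbf{D}} + \widetilde{\mathbf{B}}\mathbf{b}\widetilde{\mathbf{D}}\bigr).
\]
Lemma \ref{lem:anti_symmetry_D} already provides the SBP property of $\widetilde{\mathbf{D}}$ with respect to the weight $\mathbf{H}$ and the boundary matrix $\widetilde{\mathbf{B}}$, so the above formula matches Definition \ref{Def:Compatible_SBP_Operator} for the paired operators $\widetilde{\mathbf{D}}$, $\widetilde{\mathbf{D}}_2$ with remainder $R_x^{(b)} \equiv 0$, which is exactly the ultra-compatibility condition.

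The main obstacle is purely algebraic bookkeeping: one must carefully track the cross terms between the interior stiffness $\mathbf{D}^T\mathbf{b}\mathbf{H}\mathbf{D}$ and the interface penalty $\mathbf{\widehat{B}}$, exploiting the commutativity of the diagonal blocks $\mathbf{b}$ and $\mathbf{H}$, and recognise that the choice $\tau_0 = \gamma_0^{2} = 1/4$ is precisely the completion-of-the-square that turns $\mathbf{A}_b$ into $\widetilde{\mathbf{D}}^T\mathbf{b}\mathbf{H}\widetilde{\mathbf{D}}$. Once this calculation is carried out, the ultra-compatible SBP conclusion follows without any additional analytic work.
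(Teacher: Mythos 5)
Your proposal is correct and follows essentially the same route as the paper: both prove the lemma by expanding $\widetilde{\mathbf{D}}^T\mathbf{b}\mathbf{H}\widetilde{\mathbf{D}}$ with $\widetilde{\mathbf{D}}=\mathbf{D}-\gamma_0\mathbf{H}^{-1}\mathbf{\widehat{B}}$ and matching the result term by term against the definition of $\mathbf{A}_b$, which forces $\gamma_0=1/2$ and $\tau_0=\gamma_0^2=1/4$. The paper's proof is in fact terser ("expand the product\dots simplifying further gives the desired result"), so your explicit bookkeeping of the cross terms and the completion-of-the-square identification of $\tau_0$ is a faithful, more detailed rendering of the same argument.
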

\begin{proof}
It suffices to prove that $ \mathbf{A}_b = {\widetilde{\mathbf{D}}}^T \mathbf{b}\mathbf{H} \widetilde{\mathbf{D}}$. We consider the discrete operator $\widetilde{\mathbf{D}}$
\begin{equation*}
\begin{split}
\widetilde{\mathbf{D}} &= \begin{pmatrix} D_x& \mathbf{0} \\ \mathbf{0} & D_x \end{pmatrix} - \frac{1}{2}\begin{pmatrix} H_x & \mathbf{0} \\ \mathbf{0} & H_x \end{pmatrix}^{-1} \mathbf{\widehat{B}},
\end{split}
\end{equation*}
and expand the product $ {\widetilde{\mathbf{D}}}^T \mathbf{b}\mathbf{H} \widetilde{\mathbf{D}}$, simplifying further gives the desired result.
\end{proof}
To analyse conservation and stability of the interface treatment we will ignore contributions from the external boundaries and consider
$ \widetilde{\mathbf{D}}_2=  -\mathbf{H}^{-1}\mathbf{A}_b $.

The following theorem states that the discrete interface treatment is conservative,  which is a discrete analogue of Theorem \ref{theo:conservative_principle}.
\begin{theorem}\label{theo:conservative_interface_treatment}
The discrete interface treatment  \eqref{eq:InterfaceSemiDiscrete_shift} or \eqref{eq:InterfaceSemiDiscrete3} is conservative and satisfies  
\begin{align*}
\frac{d}{dt}\Big\langle\mathbf{1}, \mathbf{W} \Big\rangle_\mathbf{H}  = \frac{d}{dt}\Big\langle\mathbf{1},   \left(\frac{\mathrm{d}}{\mathrm{dt}} \mathbf{U} -\mathbf{a} \widetilde{\mathbf{D}} \mathbf{U}\right)\Big\rangle_\mathbf{H} = 0, \quad \mathbf{1} = \left(1, 1, \cdots, 1\right)^T \in \mathbb{R}^{2P+2}.
\end{align*}
\end{theorem}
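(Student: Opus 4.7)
The plan is to test the evolution equation for $\mathbf{W}$ in the second equation of \eqref{eq:InterfaceSemiDiscrete3} against the constant vector $\mathbf{1}$ in the discrete inner product $\langle \cdot, \cdot \rangle_{\mathbf{H}}$, and show that both terms on the right-hand side vanish identically thanks to the SBP properties established in Lemma \ref{lem:anti_symmetry_D} and Lemma \ref{lem:symmetry_A}. Since $\mathbf{H}$ is constant in time, the time derivative can be brought outside the inner product, so proving $\langle \mathbf{1}, d\mathbf{W}/dt\rangle_{\mathbf{H}} = 0$ gives the desired conservation, and the equality of the two forms of the statement follows directly from the definition $\mathbf{W} = d\mathbf{U}/dt - \mathbf{a}\widetilde{\mathbf{D}}\mathbf{U}$ in \eqref{eq:InterfaceSemiDiscrete3}.

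The central observation I would record first is that $\widetilde{\mathbf{D}}\mathbf{1} = \mathbf{0}$. This follows from consistency of $D_x$ (which differentiates a constant to zero, so $\mathbf{D}\mathbf{1} = \mathbf{0}$) together with the structure of $\widehat{\mathbf{B}}$: inspection of the block form shows $\widehat{\mathbf{B}}\mathbf{U}$ depends only on the jump $u_{P+1}^{-} - u_1^{+}$, which vanishes on $\mathbf{1}$. Hence $\widetilde{\mathbf{D}}\mathbf{1} = \mathbf{D}\mathbf{1} - \tfrac{1}{2}\mathbf{H}^{-1}\widehat{\mathbf{B}}\mathbf{1} = \mathbf{0}$.

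With this in hand, the two contributions are handled as follows. For the advective term $\widetilde{\mathbf{D}}(\mathbf{a}\mathbf{W})$, applying Lemma \ref{lem:anti_symmetry_D} with $\mathbf{V} = \mathbf{1}$ (so that external boundary contributions drop out since we are analysing only the interface treatment) gives
\[
\langle \mathbf{1}, \widetilde{\mathbf{D}}(\mathbf{a}\mathbf{W})\rangle_{\mathbf{H}} = -\langle \widetilde{\mathbf{D}}\mathbf{1}, \mathbf{a}\mathbf{W}\rangle_{\mathbf{H}} = 0.
\]
For the second-derivative term $\widetilde{\mathbf{D}}_2\mathbf{U}$, I would invoke Lemma \ref{lem:symmetry_A} which (ignoring external boundaries) identifies $\widetilde{\mathbf{D}}_2 = -\mathbf{H}^{-1}\widetilde{\mathbf{D}}^T\mathbf{b}\mathbf{H}\widetilde{\mathbf{D}}$, whence
\[
\langle \mathbf{1}, \widetilde{\mathbf{D}}_2\mathbf{U}\rangle_{\mathbf{H}} = -\mathbf{1}^T \widetilde{\mathbf{D}}^T\mathbf{b}\mathbf{H}\widetilde{\mathbf{D}}\mathbf{U} = -(\widetilde{\mathbf{D}}\mathbf{1})^T \mathbf{b}\mathbf{H}\widetilde{\mathbf{D}}\mathbf{U} = 0.
\]
Summing these two zero contributions proves $\tfrac{d}{dt}\langle \mathbf{1}, \mathbf{W}\rangle_{\mathbf{H}} = 0$.

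There is no real obstacle here beyond bookkeeping; the single delicate point is making sure that the boundary contributions in Lemma \ref{lem:anti_symmetry_D} are legitimately ignored in this interface-only analysis, which is exactly the convention announced just above the theorem (``we will ignore contributions from the external boundaries''). Equivalently, one may consider the problem with periodic wrapping or compactly supported data so that $v(B_{\pm}) = 0$ in the SBP identity. The rest is a direct application of the discrete SBP machinery, mirroring the continuous proof of Theorem \ref{theo:conservative_principle} where testing against $\phi \equiv 1$ killed the interior terms via integration by parts.
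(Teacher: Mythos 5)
Your proposal is correct and follows essentially the same route as the paper's own proof: test the $\mathbf{W}$-equation against $\mathbf{1}$, invoke Lemma \ref{lem:anti_symmetry_D} and Lemma \ref{lem:symmetry_A} to move $\widetilde{\mathbf{D}}$ onto the constant vector, and conclude from $\widetilde{\mathbf{D}}\mathbf{1}=\mathbf{0}$. You actually supply two details the paper leaves implicit --- the justification that $\widehat{\mathbf{B}}\mathbf{1}=\mathbf{0}$ so that $\widetilde{\mathbf{D}}\mathbf{1}=\mathbf{0}$, and the explicit appeal to the stated convention of ignoring external boundary terms --- but the argument is the same.
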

\begin{proof}
Consider 
\begin{align*}
\frac{d}{dt}\Big\langle\mathbf{1}, \mathbf{W} \Big\rangle_\mathbf{H}  &= \Big\langle\mathbf{1}, \frac{d}{dt}\mathbf{W} \Big\rangle_\mathbf{H} \\
&= \Big\langle\mathbf{1},    \widetilde{\mathbf{D}}(\mathbf{a} \mathbf{W})\Big\rangle_\mathbf{H} - \Big\langle\mathbf{1},   \mathbf{H}^{-1}  \mathbf{A}_b  \mathbf{U}\Big\rangle_\mathbf{H} .
\end{align*}
Using Lemma \ref{lem:anti_symmetry_D} and Lemma \ref{lem:symmetry_A} gives the desired result
\begin{align*}
\frac{d}{dt}\Big\langle\mathbf{1}, \mathbf{W} \Big\rangle_\mathbf{H}  &= - \Big\langle \widetilde{\mathbf{D}}\mathbf{1},    \mathbf{a}\mathbf{W}\Big\rangle_\mathbf{H} - \Big\langle \widetilde{\mathbf{D}}\mathbf{1},    \mathbf{b}\widetilde{\mathbf{D}}\mathbf{U}\Big\rangle_\mathbf{H} = 0,
\end{align*}
since $\widetilde{\mathbf{D}}\mathbf{1} = \mathbf{0}$.
\end{proof}
We have now proven that the interface treatment \eqref{eq:InterfaceSemiDiscrete_shift} or \eqref{eq:InterfaceSemiDiscrete3} satisfies a discrete analogue of the conservative principle Theorem \ref{theo:conservative_principle}. Next we will prove that the interface treatment \eqref{eq:InterfaceSemiDiscrete_shift} is energy stable.

We define the discrete energy
\begin{equation}\label{eq:EnergyNorm_discrete_2_def}
\begin{split}
\mathcal{E}_b\left(t\right) &= \Big\langle\mathbf{W}, \mathbf{W} \Big\rangle_\mathbf{H} +   \Big\langle\mathbf{U}, \mathbf{U} \Big\rangle_{A_b}  \ge 0, \quad \Big\langle\mathbf{U}, \mathbf{U} \Big\rangle_{A_b}= \mathbf{U}^T\mathbf{A}_{b} \mathbf{U} =\left({\widetilde{\mathbf{D}}}\mathbf{U}\right)^T \mathbf{b}\mathbf{H} \left({\widetilde{\mathbf{D}}}\mathbf{U}\right)  \ge  0. 
\end{split}
\end{equation}

We will prove 
\begin{theorem}\label{theorem:energy_stable}
For constant coefficient problem with  $a\in \mathbb{R}$ and $b> 0$,  
the discrete interface treatment  \eqref{eq:InterfaceSemiDiscrete_shift} or \eqref{eq:InterfaceSemiDiscrete3}  satisfies the energy estimate 
\begin{equation}\label{eq:EnergyNorm_discrete_2}
\begin{split}
\frac{d}{dt}\mathcal{E}_b\left(t\right) &= 0 .
\end{split}
\end{equation}
\end{theorem}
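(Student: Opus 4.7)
The plan is to differentiate the discrete energy $\mathcal{E}_b(t)$ in time, plug in the system \eqref{eq:InterfaceSemiDiscrete3}, and show that all resulting terms either vanish by the antisymmetry of $\widetilde{\mathbf{D}}$ (Lemma \ref{lem:anti_symmetry_D}, with external boundary contributions ignored) or cancel pairwise by the symmetry of $\mathbf{A}_b$ (Lemma \ref{lem:symmetry_A}). The constant-coefficient assumption is what makes this clean, since $\mathbf{a}=aI$ and $\mathbf{b}=bI$ commute with $\widetilde{\mathbf{D}}$ and $\mathbf{H}$, allowing scalars to be pulled outside the inner products.

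First I would compute $\tfrac{d}{dt}\langle\mathbf{W},\mathbf{W}\rangle_{\mathbf{H}} = 2\langle\mathbf{W},\tfrac{d}{dt}\mathbf{W}\rangle_{\mathbf{H}}$ and substitute the second equation of \eqref{eq:InterfaceSemiDiscrete3} together with $\widetilde{\mathbf{D}}_2=-\mathbf{H}^{-1}\mathbf{A}_b$. This gives
\begin{equation*}
\tfrac{d}{dt}\langle\mathbf{W},\mathbf{W}\rangle_{\mathbf{H}} = 2a\,\langle\mathbf{W},\widetilde{\mathbf{D}}\mathbf{W}\rangle_{\mathbf{H}} - 2\,\mathbf{W}^T\mathbf{A}_b\mathbf{U}.
\end{equation*}
The first term vanishes by Lemma \ref{lem:anti_symmetry_D} applied with $\mathbf{V}=\mathbf{U}=\mathbf{W}$ (ignoring external boundaries). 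Next I would differentiate $\langle\mathbf{U},\mathbf{U}\rangle_{A_b}=\mathbf{U}^T\mathbf{A}_b\mathbf{U}$; using that $\mathbf{A}_b=\widetilde{\mathbf{D}}^T\mathbf{b}\mathbf{H}\widetilde{\mathbf{D}}$ is symmetric, this yields $2\mathbf{U}^T\mathbf{A}_b\tfrac{d}{dt}\mathbf{U}$, which after substituting the first equation of \eqref{eq:InterfaceSemiDiscrete3} becomes
\begin{equation*}
\tfrac{d}{dt}\mathbf{U}^T\mathbf{A}_b\mathbf{U} = 2a\,\mathbf{U}^T\mathbf{A}_b\widetilde{\mathbf{D}}\mathbf{U} + 2\,\mathbf{U}^T\mathbf{A}_b\mathbf{W}.
\end{equation*}

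Adding the two pieces, the cross terms $-2\mathbf{W}^T\mathbf{A}_b\mathbf{U}$ and $+2\mathbf{U}^T\mathbf{A}_b\mathbf{W}$ cancel by symmetry of $\mathbf{A}_b$. What remains is the single advective contribution $2a\,\mathbf{U}^T\mathbf{A}_b\widetilde{\mathbf{D}}\mathbf{U}$. Using the factorisation from Lemma \ref{lem:symmetry_A},
\begin{equation*}
2a\,\mathbf{U}^T\mathbf{A}_b\widetilde{\mathbf{D}}\mathbf{U} = 2ab\,\bigl\langle\widetilde{\mathbf{D}}\mathbf{U},\,\widetilde{\mathbf{D}}(\widetilde{\mathbf{D}}\mathbf{U})\bigr\rangle_{\mathbf{H}},
\end{equation*}
which vanishes by another application of Lemma \ref{lem:anti_symmetry_D} with $\mathbf{V}=\mathbf{U}=\widetilde{\mathbf{D}}\mathbf{U}$ (so that $\langle\mathbf{V},\widetilde{\mathbf{D}}\mathbf{V}\rangle_{\mathbf{H}}=0$ in the no-boundary setting). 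Summing everything gives $\tfrac{d}{dt}\mathcal{E}_b(t)=0$.

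The main technical point to be careful about is the scope of the statement: we are proving the interface estimate, so external boundary contributions are discarded and the penalised operator $\widetilde{\mathbf{D}}$ acts as a genuinely antisymmetric operator in $\langle\cdot,\cdot\rangle_{\mathbf{H}}$, and correspondingly $\widetilde{\mathbf{D}}_2=-\mathbf{H}^{-1}\mathbf{A}_b$ without the $\widetilde{\mathbf{B}}\mathbf{b}\widetilde{\mathbf{D}}$ piece. The only substantive obstacle is recognising that the surviving advective term can be rewritten as $\langle\widetilde{\mathbf{D}}\mathbf{U},\widetilde{\mathbf{D}}^2\mathbf{U}\rangle_{\mathbf{H}}$ so that antisymmetry applies a second time; once that observation is in place the rest is bookkeeping.
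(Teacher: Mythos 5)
Your proposal is correct and follows essentially the same route as the paper's own proof: differentiate $\mathcal{E}_b$, substitute the system \eqref{eq:InterfaceSemiDiscrete3}, kill the $\langle\mathbf{W},\widetilde{\mathbf{D}}\mathbf{W}\rangle_{\mathbf{H}}$ term by Lemma \ref{lem:anti_symmetry_D}, cancel the cross terms by symmetry of $\mathbf{A}_b$, and rewrite the surviving advective term via Lemma \ref{lem:symmetry_A} as $ab\langle\widetilde{\mathbf{D}}\mathbf{U},\widetilde{\mathbf{D}}(\widetilde{\mathbf{D}}\mathbf{U})\rangle_{\mathbf{H}}$ so that antisymmetry applies a second time. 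The only cosmetic difference is that you write $2\langle\cdot,\cdot\rangle$ where the paper writes the term plus its transpose.
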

\begin{proof}
We take the time derivative of the energy, and obtain
\begin{align}\label{eq:time_derivative_energy}
\frac{d}{dt}\mathcal{E}_b\left(t\right)  &= \Big\langle\mathbf{W}, \frac{d}{dt}\mathbf{W} \Big\rangle_\mathbf{H} +  \Big\langle\frac{d}{dt}\mathbf{W}, \mathbf{W}  \Big\rangle_\mathbf{H} + \Big\langle\mathbf{U}, \frac{d}{dt}\mathbf{U} \Big\rangle_{A_b} +  \Big\langle\frac{d}{dt}\mathbf{U}, \mathbf{U}  \Big\rangle_{A_b}.
\end{align}
Using \eqref{eq:InterfaceSemiDiscrete3}, we replace the time derivatives in the right hand side of \eqref{eq:time_derivative_energy} and obtain 
\begin{equation}\label{eq:time_derivative_energy_0}
\begin{split}
\frac{d}{dt}\mathcal{E}_b\left(t\right)  
& = a\Big\langle\mathbf{W}, \widetilde{\mathbf{D}}\mathbf{W} \Big\rangle_\mathbf{H} +  a\Big\langle \widetilde{\mathbf{D}}\mathbf{W}, \mathbf{W}  \Big\rangle_\mathbf{H}
    -\Big\langle\mathbf{W}, \mathbf{U} \Big\rangle_{A_b} -  \Big\langle \mathbf{U}, \mathbf{W}  \Big\rangle_{A_b}\\
& + a\Big\langle\mathbf{U}, \widetilde{\mathbf{D}}\mathbf{U} \Big\rangle_{A_b} +  a\Big\langle \widetilde{\mathbf{D}}\mathbf{U}, \mathbf{U}  \Big\rangle_{A_b}
   + \Big\langle\mathbf{W}, \mathbf{U} \Big\rangle_{A_b} +  \Big\langle \mathbf{U}, \mathbf{W}  \Big\rangle_{A_b}.
\end{split}
\end{equation}
By Lemma \ref{lem:anti_symmetry_D}, the first two terms in \eqref{eq:time_derivative_energy_0} vanish, 
and by inspection the last two terms in \eqref{eq:time_derivative_energy_0} also cancel out.  Using  Lemma \ref{lem:symmetry_A} 
 and Lemma \ref{lem:anti_symmetry_D} gives the desired result 
\begin{equation*}
\begin{split}
\frac{d}{dt}\mathcal{E}_b \left(t\right)  
& = ab\Big\langle\widetilde{\mathbf{D}}\mathbf{U}, \widetilde{\mathbf{D}}\left(\widetilde{\mathbf{D}}\mathbf{U}\right) \Big\rangle_\mathbf{H} +  ab\Big\langle\widetilde{\mathbf{D}}\left(\widetilde{\mathbf{D}}\mathbf{U}\right), \widetilde{\mathbf{D}}\mathbf{U} \Big\rangle_\mathbf{H} = 0.
\end{split}
\end{equation*}
\end{proof}
In the absence of external boundaries, Theorems \ref{theo:conservative_interface_treatment} and \ref{theorem:energy_stable} prove that the numerical interface treatment \eqref{eq:InterfaceSemiDiscrete_shift} or \eqref{eq:InterfaceSemiDiscrete3} is conservative, and for constant coefficient ptoblems energy stable. The results easily extend to periodic (external) boundary conditions.
For IBVPs where non-periodic boundary conditions are present, we will argument Theorems \ref{theorem:energy_stable} with the analysis below.

Theorem \ref{theorem:energy_stable} holds for  any constant $a \in \mathbb{R}$ and $b>0$. In the following, we show that in the case {$c=b-a^2 > 0$}, the discrete interface treatment  \eqref{eq:InterfaceSemiDiscrete_shift} also satisfies another energy estimate even for variable coefficients $a,b$.

 We note that \eqref{eq:InterfaceSemiDiscrete_shift} can be rewritten as
\begin{align}
        \label{eq:InterfaceSemiDiscrete3_c0}
          \frac{\mathrm{d}}{\mathrm{dt}} \mathbf{U} =   \mathbf{W}, \quad \frac{\mathrm{d}}{\mathrm{dt}} \mathbf{W}  =   \mathbf{a}\widetilde{\mathbf{D}}\mathbf{W} +\widetilde{\mathbf{D}}\left(\mathbf{a}\mathbf{W} \right)  + \widetilde{\mathbf{D}}_2\mathbf{U}, \quad  \widetilde{\mathbf{D}}_2=- \mathbf{H}^{-1} \widetilde{\mathbf{D}}^T\mathbf{c}\mathbf{H}\widetilde{\mathbf{D}}.
\end{align}
%
%
By a new discrete energy $\mathcal{E}_c$ defined as
\begin{equation}\label{eq:EnergyNorm_discrete_2_defp}
\begin{split}
\mathcal{E}_c\left(t\right) &= \Big\langle\mathbf{W}, \mathbf{W} \Big\rangle_\mathbf{H} +   \Big\langle\mathbf{U}, \mathbf{U} \Big\rangle_{A_c}  \geq 0, \quad \Big\langle\mathbf{U}, \mathbf{U} \Big\rangle_{A_c}  =  \mathbf{U}^T\widetilde{\mathbf{D}}^T\mathbf{c}\mathbf{H}\widetilde{\mathbf{D}}\mathbf{U} \ge 0,
\end{split}
\end{equation}
 we have the following theorem for energy stability. 
\begin{theorem}\label{theorem:energy_stable_p}
For any $b>0$, if $c = b-a^2 >0$, then the discrete interface treatment  \eqref{eq:InterfaceSemiDiscrete3}  satisfies  the energy estimate
\begin{equation}\label{eq:EnergyNorm_discrete_2_p}
\begin{split}
\frac{d}{dt}\mathcal{E}_c\left(t\right) &= 0 .
\end{split}
\end{equation}
\end{theorem}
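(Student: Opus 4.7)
The plan is to mirror the proof of Theorem \ref{theorem:energy_stable}: differentiate the energy, substitute the evolution equations \eqref{eq:InterfaceSemiDiscrete3_c0}, and exhibit term-by-term cancellation using the anti-symmetry of $\widetilde{\mathbf{D}}$ (Lemma \ref{lem:anti_symmetry_D}) together with the symmetry structure hidden in $\widetilde{\mathbf{D}}_2 = -\mathbf{H}^{-1}\widetilde{\mathbf{D}}^T \mathbf{c}\mathbf{H}\widetilde{\mathbf{D}}$. Note that, unlike Theorem \ref{theorem:energy_stable}, the definition of $\widetilde{\mathbf{D}}_2$ here packages the quadratic form $\mathbf{U}^T \widetilde{\mathbf{D}}^T \mathbf{c}\mathbf{H}\widetilde{\mathbf{D}} \mathbf{U}$ directly as the potential energy, so a separate lemma analogous to Lemma \ref{lem:symmetry_A} is not needed -- the symmetry of $\mathbf{A}_c := \widetilde{\mathbf{D}}^T \mathbf{c}\mathbf{H}\widetilde{\mathbf{D}}$ follows by inspection.

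First I would differentiate $\mathcal{E}_c$ to obtain
\begin{equation*}
\frac{d}{dt}\mathcal{E}_c(t) = 2\Big\langle\mathbf{W}, \tfrac{d}{dt}\mathbf{W}\Big\rangle_{\mathbf{H}} + 2\Big\langle\mathbf{U}, \tfrac{d}{dt}\mathbf{U}\Big\rangle_{A_c},
\end{equation*}
then insert $\tfrac{d}{dt}\mathbf{U} = \mathbf{W}$ and $\tfrac{d}{dt}\mathbf{W} = \mathbf{a}\widetilde{\mathbf{D}}\mathbf{W} + \widetilde{\mathbf{D}}(\mathbf{a}\mathbf{W}) + \widetilde{\mathbf{D}}_2 \mathbf{U}$. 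This produces three groups of terms: the advective contributions
$\langle\mathbf{W}, \mathbf{a}\widetilde{\mathbf{D}}\mathbf{W}\rangle_{\mathbf{H}} + \langle\mathbf{W}, \widetilde{\mathbf{D}}(\mathbf{a}\mathbf{W})\rangle_{\mathbf{H}}$, the coupling term $\langle\mathbf{W}, \widetilde{\mathbf{D}}_2 \mathbf{U}\rangle_{\mathbf{H}}$, and the potential-energy derivative $\langle\mathbf{U},\mathbf{W}\rangle_{A_c}$.

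For the advective group I would use that $\mathbf{a}$ and $\mathbf{H}$ are diagonal (hence commute) to rewrite $\langle\mathbf{W}, \mathbf{a}\widetilde{\mathbf{D}}\mathbf{W}\rangle_{\mathbf{H}} = \langle\mathbf{a}\mathbf{W}, \widetilde{\mathbf{D}}\mathbf{W}\rangle_{\mathbf{H}}$, and then apply Lemma \ref{lem:anti_symmetry_D} with $\mathbf{V} = \mathbf{a}\mathbf{W}$ to conclude
\begin{equation*}
\langle\mathbf{a}\mathbf{W}, \widetilde{\mathbf{D}}\mathbf{W}\rangle_{\mathbf{H}} + \langle\widetilde{\mathbf{D}}(\mathbf{a}\mathbf{W}), \mathbf{W}\rangle_{\mathbf{H}} = 0,
\end{equation*}
so the variable-coefficient skew-symmetric split chosen in \eqref{eq:InterfaceSemiDiscrete3_c0} has no energy contribution. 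For the remaining two groups I would expand $\langle\mathbf{W}, \widetilde{\mathbf{D}}_2 \mathbf{U}\rangle_{\mathbf{H}} = -(\widetilde{\mathbf{D}}\mathbf{U})^T \mathbf{c}\mathbf{H}(\widetilde{\mathbf{D}}\mathbf{W})$ directly from the definition of $\widetilde{\mathbf{D}}_2$, and $\langle\mathbf{U},\mathbf{W}\rangle_{A_c} = (\widetilde{\mathbf{D}}\mathbf{U})^T \mathbf{c}\mathbf{H}(\widetilde{\mathbf{D}}\mathbf{W})$; these two scalars are manifest negatives of each other and cancel.

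The main obstacle to anticipate is the variable-coefficient advection piece: in the continuous problem this cancellation is an integration-by-parts that leaves a boundary term, and one must be sure that the penalised operator $\widetilde{\mathbf{D}}$ preserves exact anti-symmetry when external boundaries are ignored. This is precisely what Lemma \ref{lem:anti_symmetry_D} delivers with the choice $\gamma_0 = 1/2$, so the step is clean once one commits the $\mathbf{a}$-$\mathbf{H}$ commutation argument. Assembling the three cancellations gives $\tfrac{d}{dt}\mathcal{E}_c(t) = 0$, proving the theorem; the hypothesis $c = b - a^2 > 0$ is used only to guarantee that $\mathcal{E}_c(t) \ge 0$ defines a genuine energy via the positivity of $\mathbf{c}$ in $\mathbf{A}_c = \widetilde{\mathbf{D}}^T \mathbf{c}\mathbf{H}\widetilde{\mathbf{D}}$.
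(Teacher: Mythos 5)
Your proposal is correct and follows essentially the same route as the paper: differentiate $\mathcal{E}_c$, substitute \eqref{eq:InterfaceSemiDiscrete3_c0}, cancel the $A_c$-coupling terms against the potential-energy derivative by inspection, and kill the advective pair via Lemma \ref{lem:anti_symmetry_D}. You are in fact slightly more explicit than the paper (which compresses the last step to ``Using Lemma \ref{lem:anti_symmetry_D} yields the desired result''), since you spell out the diagonal $\mathbf{a}$--$\mathbf{H}$ commutation needed to apply the lemma with $\mathbf{V}=\mathbf{a}\mathbf{W}$, and you correctly observe that $c>0$ is used only for positivity of the energy, not for the cancellation itself.
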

\begin{proof}
We take the time derivative of the energy, and we have
\begin{align}\label{eq:time_derivative_energy_c0}
\frac{d}{dt}\mathcal{E}_c\left(t\right)  &= \Big\langle\mathbf{W}, \frac{d}{dt}\mathbf{W} \Big\rangle_\mathbf{H} +  \Big\langle\frac{d}{dt}\mathbf{W}, \mathbf{W}  \Big\rangle_\mathbf{H} + \Big\langle\mathbf{U}, \frac{d}{dt}\mathbf{U} \Big\rangle_{A_c} +  \Big\langle\frac{d}{dt}\mathbf{U}, \mathbf{U}  \Big\rangle_{A_c}.
\end{align}
Using \eqref{eq:InterfaceSemiDiscrete3_c0}, we replace the time derivatives in the right hand side of \eqref{eq:time_derivative_energy}, having 
\begin{equation}\label{eq:time_derivative_energy_0_c0}
\begin{split}
\frac{d}{dt}\mathcal{E}_c\left(t\right)  
& = \Big\langle\mathbf{W}, \mathbf{a}\widetilde{\mathbf{D}}\mathbf{W} \Big\rangle_\mathbf{H} +\Big\langle\mathbf{W}, \widetilde{\mathbf{D}}(\mathbf{a}\mathbf{W}) \Big\rangle_\mathbf{H}   -\Big\langle\mathbf{W}, \mathbf{U} \Big\rangle_{A_c} \\
&+\Big\langle \mathbf{a}\widetilde{\mathbf{D}}\mathbf{W},\mathbf{W} \Big\rangle_\mathbf{H} +\Big\langle \widetilde{\mathbf{D}}(\mathbf{a}\mathbf{W}),\mathbf{W} \Big\rangle_\mathbf{H} -  \Big\langle \mathbf{U}, \mathbf{W}  \Big\rangle_{A_c}\\
 &  + \Big\langle\mathbf{W}, \mathbf{U} \Big\rangle_{A_c} +  \Big\langle \mathbf{U}, \mathbf{W}  \Big\rangle_{A_c}.
\end{split}
\end{equation}
Using Lemma \ref{lem:anti_symmetry_D} yields the desired result \eqref{eq:EnergyNorm_discrete_2_p}.
\end{proof}
\begin{remark}
The ultra-compatible properties of the element local spectral difference operators and the modified global spectral difference operators  are critical for the development of a conservative and provably stable numerical interface treatment for the shifted wave equation in second order form, for all well-posed medium parameters, $a, b \in \mathbb{R}$, $b>0$.
\end{remark}

The motivation of deriving two energy estimates for the numerical interface treatment is to be able to obtain an energy estimate when generalising to multiple elements with physical boundary conditions. As will be shown in Sec.~\ref{sec_nbt}, the energy analysis for the numerical boundary treatment also uses two different discrete energies, depending on the sign of the parameter $c$. 


\subsection{Numerical boundary treatments}\label{sec_nbt}
We will now consider numerical enforcements of physical boundary conditions.
In particular, we will numerically impose the boundary conditions derived in Section \ref{sec:shifted_wave_equation}.
As above we will consider $c> 0$ and $c<0$ separately. 
The boundary conditions will be implemented weakly using penalties and we will prove numerical stability by deriving discrete energy estimates.
For simplicity we will consider numerical approximation in a single element with homogeneous boundary data ($g_1(t) =g_2(t)= 0$) and focus on the numerical boundary treatments. The analysis can be extended to multiple elements using theory developed in Section \ref{sec:numerical_interface_treatment}.

\subsubsection{Case 1:  $c=b-a^2>0$}
The semi-discrete approximation of the IVBP \eqref{1deqn} and \eqref{eq:BC_case1} in a single element can be written as 
\begin{equation}\label{eq:BoundarySemiDiscrete_shift_Case1}
\begin{split}
&\frac{d}{d t}\left(\frac{d \mathbf{u}}{d t} -\mathbf{a}D_x\mathbf{u}\right)-D_x\left(\mathbf{a}\left(\frac{d \mathbf{u}}{d t} -\mathbf{a}D_x\mathbf{u} \right)+\mathbf{b}D_x\mathbf{u}\right) \\
& = \tau_0 H_x^{-1}  \el \el^T\left( \frac{d \mathbf{u}}{d t} - \boldsymbol{\lambda}_1 D_x \mathbf{u}\right)+ \tau_N H_x^{-1}  \er \er^T\left(\frac{d \mathbf{u}}{d t} - \boldsymbol{\lambda}_2 D_x \mathbf{u}\right),
\end{split}
\end{equation}
where $\tau_0$ and $\tau_N$ are mesh independent penalty parameter to be determined by requiring stability. The diagonal matrices $\boldsymbol{\lambda}_1$, $\boldsymbol{\lambda}_2$, $\mathbf{a}$, $\mathbf{b}$ and $\mathbf{c}$ are the variables $a$, $b$, $c$, $\lambda_1$, $\lambda_2$ from \eqref{lambda} evaluated on the quadrature nodes, respectively.   We also define quantities corresponding to the first diagonal element $a_1$, $b_1$, $c_1$, $\lambda_{11}$, $\lambda_{21}$, and the last diagonal element as $a_N$, $b_N$, $c_N$, $\lambda_{1N}$, $\lambda_{2N}$.
Let 
\begin{equation}\label{eq:Energy_IBVP_Case1}
\begin{split}
\mathbf{A}_c = D_x^T\mathbf{c}H_xD_x, \quad \Big\langle\mathbf{u}, \mathbf{u} \Big\rangle_{A_c} =   \mathbf{u}^T\mathbf{A}_c \mathbf{u}\ge 0,
\end{split}
\end{equation}
and define the discrete energy
\begin{equation}\label{eq:EnergyNorm_IBVP_Case1}
\begin{split}
\mathcal{E}_c\left(t\right) &= \Big\langle\frac{d\mathbf{u}}{dt}, \frac{d\mathbf{u}}{dt} \Big\rangle_{H_{x}} +   \Big\langle\mathbf{u}, \mathbf{u} \Big\rangle_{A_c}  \geq 0 .
\end{split}
\end{equation}
We have the following theorem for the stability of \eqref{eq:BoundarySemiDiscrete_shift_Case1}.
\begin{theorem}\label{theorem:energy_stable_IBVP_case1}
The discrete boundary treatment  \eqref{eq:BoundarySemiDiscrete_shift_Case1} with the penalty parameters $\tau_0=\lambda_{21}$ and $\tau_N=-\lambda_{1N}$   satisfies  the energy estimate 
\begin{equation}\label{eq:energy_stable_IBVP_case1}
\begin{split}
\frac{d}{dt}\mathcal{E}_c\left(t\right)=-2\sqrt{b_1}\left(\frac{d\mathbf{u}}{d t}\right)^T  \el \el^T\left(\frac{d\mathbf{u}}{d t}\right)-2\sqrt{b_N}\left(\frac{d\mathbf{u}}{d t}\right)^T  \er \er^T\left(\frac{d\mathbf{u}}{d t}\right)\leq 0.
\end{split}
\end{equation}
\end{theorem}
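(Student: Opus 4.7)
The plan is to carry out a discrete analogue of the continuous proof of Theorem \ref{theo:energy_estimate_continuous_c0}. First I would rewrite the left-hand side of \eqref{eq:BoundarySemiDiscrete_shift_Case1} in the simplified form
\begin{equation*}
\frac{d^2\mathbf{u}}{dt^2} - \mathbf{a}\, D_x \frac{d\mathbf{u}}{dt} - D_x\!\left(\mathbf{a}\frac{d\mathbf{u}}{dt}\right) - D_x\!\left(\mathbf{c}\, D_x\mathbf{u}\right),
\end{equation*}
using $\mathbf{c} = \mathbf{b} - \mathbf{a}^2$ (since the diagonal matrix $\mathbf{a}$ commutes with itself); this is the direct discrete analogue of \eqref{1deqn_c0}. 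I would then take the $H_x$ inner product of the whole equation with $\mathbf{v} := d\mathbf{u}/dt$, expecting the three pieces on the left to reproduce $\tfrac{1}{2}\tfrac{d}{dt}\mathcal{E}_c(t)$ plus boundary residuals that the penalties will cancel.

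Next I would evaluate each term via the SBP machinery. The time-time term gives $\tfrac{1}{2}\tfrac{d}{dt}\|\mathbf{v}\|^2_{H_x}$ directly, since $\mathbf{v}^T H_x \mathbf{v}' = \tfrac{1}{2}\tfrac{d}{dt}(\mathbf{v}^T H_x \mathbf{v})$. For the two advective terms, using $Q+Q^T=B$ together with the scalar identity $\mathbf{v}^T Q(\mathbf{a}\mathbf{v}) = (\mathbf{a}\mathbf{v})^T Q^T \mathbf{v}$ yields the pure boundary contribution
\begin{equation*}
\mathbf{v}^T H_x\!\left[\mathbf{a}\, D_x\mathbf{v} + D_x(\mathbf{a}\mathbf{v})\right] = (\mathbf{a}\mathbf{v})^T (Q+Q^T)\mathbf{v} = (\mathbf{a}\mathbf{v})^T B \mathbf{v} = a_N v_N^2 - a_1 v_1^2,
\end{equation*}
valid even for variable $\mathbf{a}$. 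For the second-derivative term I would invoke the ultra-compatible identity \eqref{SBP_2nd}, written as $H_x D_x\mathbf{c}D_x = -D_x^T H_x \mathbf{c} D_x + B\mathbf{c}D_x$, to obtain
\begin{equation*}
\mathbf{v}^T H_x D_x(\mathbf{c}\, D_x\mathbf{u}) = -\tfrac{1}{2}\tfrac{d}{dt}\!\left(\mathbf{u}^T \mathbf{A}_c \mathbf{u}\right) + v_N c_N (D_x\mathbf{u})_N - v_1 c_1 (D_x\mathbf{u})_1,
\end{equation*}
after recognising the volume part as half the time derivative of $\langle\mathbf{u},\mathbf{u}\rangle_{A_c}$. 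Collecting the volume contributions on the left reconstructs $\tfrac{1}{2}\tfrac{d}{dt}\mathcal{E}_c(t)$; what remains is a sum of boundary terms at $B_\pm$ supplemented by $\mathbf{v}^T H_x(\text{penalty})$.

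Finally I would substitute $\tau_0 = \lambda_{21}$ and $\tau_N = -\lambda_{1N}$ and verify the boundary algebra collapses. At $x = B_+$ the net contribution is $(a_N + \tau_N) v_N^2 + (c_N - \tau_N \lambda_{2N}) v_N (D_x\mathbf{u})_N$, and at $x = B_-$ the mirrored $(\tau_0 - a_1)v_1^2 - (c_1 + \tau_0 \lambda_{11}) v_1 (D_x\mathbf{u})_1$. Invoking $\lambda_1 = a+\sqrt{b}$, $\lambda_2 = a-\sqrt{b}$ and $\lambda_1\lambda_2 = a^2 - b = -c$ from \eqref{lambda}, the cross coefficients $c_N - \tau_N\lambda_{2N}$ and $c_1 + \tau_0\lambda_{11}$ both vanish identically, while the $v^2$ coefficients collapse to $-\sqrt{b_N}$ and $-\sqrt{b_1}$; assembling gives \eqref{eq:energy_stable_IBVP_case1}. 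The main technical subtlety is the advective identity for variable $\mathbf{a}$: although $\mathbf{a}$ need not commute with $Q$, the symmetrised combination $\mathbf{a}(Q+Q^T) + (Q+Q^T)\mathbf{a}$ simplifies to $2\mathbf{a}B$ because $B$ is diagonal, so what initially threatens to produce a nontrivial volume contribution is in fact purely boundary. Everything else is bookkeeping that tracks the continuous proof step-for-step.
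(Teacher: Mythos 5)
Your proposal is correct and follows essentially the same route as the paper's proof: rewrite \eqref{eq:BoundarySemiDiscrete_shift_Case1} in the $\mathbf{c}=\mathbf{b}-\mathbf{a}^2$ form \eqref{eq:BoundarySemiDiscrete_shift_Case1_proof_0}, pair with $d\mathbf{u}/dt$ in the $H_x$ inner product, use $Q+Q^T=B$ and the ultra-compatibility $H_xD_x\mathbf{c}D_x=-D_x^TH_x\mathbf{c}D_x+B\mathbf{c}D_x$ to reduce everything to boundary terms, and then let $\tau_0=\lambda_{21}$, $\tau_N=-\lambda_{1N}$ together with $\lambda_1\lambda_2=-c$ kill the cross terms and collapse the $v^2$ coefficients to $-\sqrt{b_1}$, $-\sqrt{b_N}$. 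Your boundary bookkeeping at $B_\pm$ matches the paper's displayed coefficients $(\tau_0-a_1)$, $(-\tau_0\lambda_{11}-c_1)$, $(\tau_N+a_N)$, $(-\tau_N\lambda_{2N}+c_N)$ exactly (up to the overall factor of two from symmetrising), so no gap.
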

\begin{proof}
We rewrite \eqref{eq:BoundarySemiDiscrete_shift_Case1}  as
\begin{equation}\label{eq:BoundarySemiDiscrete_shift_Case1_proof_0}
\begin{split}
\frac{d^2 \mathbf{u}}{d t^2} =& \mathbf{a} D_x\frac{d\mathbf{u}}{d t} + D_x\mathbf{a}\frac{d\mathbf{u}}{d t} +D_x\mathbf{c}D_x\mathbf{u}  \\
&+ \tau_0 H_x^{-1}  \el \el^T\left( \frac{d \mathbf{u}}{d t} - \boldsymbol{\lambda}_1 D_x \mathbf{u}\right)+ \tau_N H_x^{-1}  \er \er^T\left(\frac{d \mathbf{u}}{d t} - \boldsymbol{\lambda}_2 D_x \mathbf{u}\right).
\end{split}
\end{equation}
 By taking the time derivative of the discrete energy $\mathcal{E}_c$,  we obtain
\begin{align}\label{eq:time_derivative_energy_proof}
\frac{d}{dt}\mathcal{E}_c\left(t\right)  &= \Big\langle\frac{d\mathbf{u}}{d t},  \frac{d^2\mathbf{u}}{d t^2} \Big\rangle_{H_{x}} +  \Big\langle \frac{d^2\mathbf{u}}{d t^2} , \frac{d\mathbf{u}}{d t} \Big\rangle_{H_{x}} + \Big\langle\mathbf{u}, \frac{d}{dt}\mathbf{u} \Big\rangle_{A_c} +  \Big\langle\frac{d}{dt}\mathbf{u}, \mathbf{u}  \Big\rangle_{A_c}.
\end{align}
In \eqref{eq:time_derivative_energy_proof}, we replace the second time derivative with the right hand side of \eqref{eq:BoundarySemiDiscrete_shift_Case1_proof_0}, and use the ultra-compatibility of the SBP operators \eqref{sdoD}--\eqref{SBP_1st} and  \eqref{sdoDD}--\eqref{SBP_2nd}.
This gives, 
\begin{align*}
\frac{d}{dt}\mathcal{E}_c\left(t\right)= &2 \left(\frac{d\mathbf{u}}{d t}\right)^T  B\mathbf{a} \frac{d\mathbf{u}}{d t}  +2\left(\frac{d\mathbf{u}}{d t}\right)^T B\mathbf{c}D\mathbf{u}\\
&+ 2\tau_0 \left(\frac{d\mathbf{u}}{d t}\right)^T  \el \el^T\left( \frac{d \mathbf{u}}{d t} - \boldsymbol{\lambda}_1 D \mathbf{u}\right)+ 2\tau_N \left(\frac{d\mathbf{u}}{d t}\right)^T \er \er^T\left(\frac{d \mathbf{u}}{d t} - \boldsymbol{\lambda}_2 D \mathbf{u}\right)\\
=& 2\left(\frac{d\mathbf{u}}{d t}\right)^T  \el \el^T\left( (\tau_0-a_1)\frac{d \mathbf{u}}{d t} +(-\tau_0 \lambda_{11}-c_1) D \mathbf{u}\right)\\
&+2\left(\frac{d\mathbf{u}}{d t}\right)^T \er \er^T\left((\tau_N+a_N)\frac{d \mathbf{u}}{d t} +(-\tau_N \lambda_{2N}+c_N) D \mathbf{u}\right),
\end{align*}
where $a_1$, $\lambda_{11}$ and $c_1$ are the first diagonal element of $ \mathbf{a}$, $\lambda_1$ and $ \mathbf{c}$, respectively. The quantities with subscript $N$ are defined analogously for the last diagonal element.  
Using $c=-\lambda_1\lambda_2$ and $a=\frac{\lambda_1+\lambda_2}{2}$,  we obtain the energy estimate \eqref{eq:energy_stable_IBVP_case1} with $\tau_0=\lambda_{21}$ and $\tau_N=-\lambda_{1N}$. 
\end{proof}
We note that the stability result can be extended to multiple elements by combining Theorem \ref{theorem:energy_stable_IBVP_case1}  with the numerical interface treatment Theorem \ref{theorem:energy_stable_p} for general problems with variable coefficients.


\subsubsection{Case 2: $a > 0$ and $ c< 0$}
The relation $a > 0$ and $ c< 0$   implies $\lambda_1 > \lambda_2 > 0$. A consistent  semi-discrete approximation of the IVBP \eqref{1deqn} and \eqref{eq:BC_case2} can be written as 
\begin{align}
        \label{eq:BoundarySemiDiscrete_shift_Case2}
          & \frac{{d}}{{dt}}\left( \frac{d \mathbf{u}}{d t} - \mathbf{a}\left(D_x-   \gamma_0{H_{x}^{-1}  \er \er^T} \right) \mathbf{u}  \right) - D_x 
          \left(\mathbf{a}\left(\frac{d \mathbf{u}}{d t} - \mathbf{a}\left(D_x-   \gamma_0{H_{x}^{-1}  \er \er^T} \right) \mathbf{u} \right) + \mathbf{b}D_x  \mathbf{u}  \right)  
            \\ \notag
          & + {\tau_N H_{x} ^{-1}  \er \er^T\mathbf{a}\left(  \frac{d \mathbf{u}}{d t} - \mathbf{a}\left(D_x-   \gamma_0{H_{x}^{-1}  \er \er^T} \right) \mathbf{u} \black \right) \black }   \\ \notag
          &+ { H_{x}^{-1} \left(\gamma_N D_x^T \mathbf{b}  \er \er^T + \tau_0    \er \er^T \mathbf{b}H_{x}^{-1}\er \er^T  \right)   \mathbf{u}}  = 0,
\end{align}
where $\gamma_0, \tau_0, \tau_N$ and  $\gamma_N $ are penalty  to be determined by requiring stability.
We will show how to choose the penalty parameters and prove energy stability. 

To begin, set $\gamma_0 = \tau_0 = \tau_N = 1$ and  $\gamma_N = -1$, from \eqref{eq:BoundarySemiDiscrete_shift_Case2} we have
\begin{align}
        \label{eq:BC_R0_SemiDiscrete_shift_0}
          & \frac{{d}}{{dt}}\left( \frac{d \mathbf{u}}{d t} - \mathbf{a}\left(D_x-   {H_{x}^{-1}  \er \er^T } \right) \mathbf{u}  \right) - \left(D_x -H_{x}^{-1}  \er \er^T  \right) 
          \left(\mathbf{a}\left(\frac{d \mathbf{u}}{d t} - \mathbf{a}\left(D_x-   {H_{x}^{-1}  \er \er^T } \right) \mathbf{u} \right)  \right)  
            \\ \notag
          & -D_x\mathbf{b}D_x  \mathbf{u}  - { H_{x}^{-1} \left( D_x^T \mathbf{b}  \er \er^T -      \er \er^T \mathbf{b}H_{x}^{-1}\er \er^T  \right)   \mathbf{u}}  = 0.        
\end{align}
With the modified operators 
\begin{align}\label{eq:modified_SBP_D}
\widetilde{D}_x = \left(D_x-   {H_x^{-1}  \er \er^T} \right), \quad {A}_b = \widetilde{D}_x ^TbH_x\widetilde{D}_x ,
\end{align}
we have the following lemma. 
\begin{lemma}\label{lem:dissipative_D}
Consider the modified operator $\widetilde{D}_x$ defined in \eqref{eq:modified_SBP_D} and the grid function $\mathbf{u} \in\mathbb{R}^{P+1}$. We have
\begin{align}
\langle\mathbf{u} , \widetilde{D}_x \mathbf{u} \rangle_{{H_{x}}} + \langle  \widetilde{D}_x\mathbf{u}, \mathbf{u} \rangle_{{H_{x}}} =  -\left(u_{1}^2 + u_{P+1}^2 \right) \le 0.
\end{align}
\end{lemma}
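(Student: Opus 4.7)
The goal is to show that the boundary penalty correction $-H_x^{-1}\er\er^T$ added to $D_x$ transforms the SBP-type identity into a strictly dissipative one. The plan is to split each of the two inner products $\langle\mathbf{u},\widetilde{D}_x\mathbf{u}\rangle_{H_x}$ and $\langle\widetilde{D}_x\mathbf{u},\mathbf{u}\rangle_{H_x}$ into the contribution from $D_x$ and the contribution from the rank-one boundary term, and then sum.

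First, I would write out $\langle\widetilde{D}_x\mathbf{u},\mathbf{u}\rangle_{H_x} = \mathbf{u}^T H_x \widetilde{D}_x\mathbf{u} = \mathbf{u}^T H_x D_x\mathbf{u} - \mathbf{u}^T \er\er^T\mathbf{u} = \mathbf{u}^T Q \mathbf{u} - u_{P+1}^2$, using the definition $D_x = H_x^{-1}Q$ from \eqref{sdoD} and the fact that $H_x\cdot H_x^{-1}=I$. Symmetrically, $\langle\mathbf{u},\widetilde{D}_x\mathbf{u}\rangle_{H_x} = \mathbf{u}^T \widetilde{D}_x^T H_x \mathbf{u} = \mathbf{u}^T Q^T \mathbf{u} - u_{P+1}^2$, where I have again used that $H_x$ is symmetric so that $(\er\er^T H_x^{-1})^T H_x = \er\er^T$.

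Adding the two equalities and invoking the SBP property \eqref{SBP_1st}, namely $Q+Q^T = B = \mathrm{diag}([-1,0,\ldots,0,1])$, gives
\begin{equation*}
\langle\mathbf{u},\widetilde{D}_x\mathbf{u}\rangle_{H_x}+\langle\widetilde{D}_x\mathbf{u},\mathbf{u}\rangle_{H_x} = \mathbf{u}^T B\mathbf{u} - 2u_{P+1}^2 = (u_{P+1}^2-u_1^2) - 2u_{P+1}^2 = -(u_1^2+u_{P+1}^2),
\end{equation*}
which is the claimed identity and is manifestly non-positive.

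There is no real obstacle here; this is essentially bookkeeping built entirely on Definition~\ref{Def:SBP_Operator} and the identity \eqref{SBP_1st}. The only point to double-check is the sign and placement of the rank-one correction: because the perturbation $-H_x^{-1}\er\er^T$ is applied only at the right endpoint $x=B_+$, it supplies the missing $-u_{P+1}^2$ term that converts the asymmetric boundary contribution $u_{P+1}^2-u_1^2$ coming from $B$ into the symmetric dissipative quantity $-(u_1^2+u_{P+1}^2)$. This is consistent with the intended use of $\widetilde{D}_x$ in the Case 2 boundary treatment \eqref{eq:BC_R0_SemiDiscrete_shift_0}, where two boundary conditions are imposed at the outflow node $\er$ and none at the inflow node $\el$.
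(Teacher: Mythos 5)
Your proof is correct and follows essentially the same route as the paper: both arguments reduce to writing $H_x\widetilde{D}_x = Q - \er\er^T$, invoking the SBP identity $Q+Q^T=B=\er\er^T-\el\el^T$ from \eqref{SBP_1st}, and observing that $B-2\er\er^T=-\bigl(\el\el^T+\er\er^T\bigr)$. The paper merely packages the same computation by first rewriting $\widetilde{D}_x=H_x^{-1}\bigl(-D_x^TH_x-\el\el^T\bigr)$ before adding, which is a cosmetic rather than substantive difference.
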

\begin{proof}
Consider $\widetilde{D}_x$ defined \eqref{eq:modified_SBP_D} and simplify using the SBP property \eqref{sdoD},
\begin{align*}
\widetilde{D}_x &= \left(D_x-   {H_x^{-1}  \er \er^T } \right)\\
                        &= H_x^{-1} \left(-D_x^TH_x-    \el \el^T \right)
\end{align*}
Therefore, we have
\begin{align*}
\langle\mathbf{u} , \widetilde{D}_x \mathbf{u} \rangle_{{H}} + \langle  \widetilde{D}_x\mathbf{u}, \mathbf{u} \rangle_{{H}}  =-\mathbf{u}^T \left(\el \el^T    +  \er \er^T \right) \mathbf{u}   = -\left(u_{1}^2 + u_{P+1}^2 \right) \le 0.
\end{align*}
\end{proof}

We introduce the auxiliary variable $\mathbf{w}$ such that \eqref{eq:BC_R0_SemiDiscrete_shift_0} can be written as
\begin{align}
        \label{eq:BC_R0_SemiDiscrete_shift2_BC2}
          \frac{d \mathbf{u}}{d t}  =  \mathbf{a}\widetilde{D}_x \mathbf{u}  + \mathbf{w}, \quad  \frac{\mathrm{d}}{\mathrm{dt}}\mathbf{w} =  \widetilde{D}_x\left(\mathbf{a}\mathbf{w}\right) + \widetilde{D}_{xx}  \mathbf{u}, \quad \widetilde{D}_{xx}  =-  H_{x}^{-1}{A}_b,
%
\end{align}
where we have used the ultra-compatibility of the SBP operators \eqref{sdoD}--\eqref{SBP_1st} and  \eqref{sdoDD}--\eqref{SBP_2nd}.
With the discrete energy
\begin{equation}\label{eq:EnergyNorm_discrete_2_bc2}
\begin{split}
\mathcal{E}_b\left(t\right) &=  \langle\mathbf{w} , \mathbf{w} \rangle_{{H_{x}}}  +   \langle\mathbf{u} , \mathbf{u} \rangle_{{A}_b} >0, \quad  \langle\mathbf{u} , \mathbf{u} \rangle_{{A}_b}:=\mathbf{u} ^T {A}_b  \mathbf{u} \geq 0 ,
\end{split}
\end{equation}
we have the following theorem for the stability of \eqref{eq:BC_R0_SemiDiscrete_shift_0}.
\begin{theorem}\label{theorem:energy_stable_IBVP_case2}
Consider the discrete boundary treatment \eqref{eq:BoundarySemiDiscrete_shift_Case2} for constants $a >0$, $b>0$ and $c = b-a^2< 0$. The numerical boundary treatment  \eqref{eq:BoundarySemiDiscrete_shift_Case2} with the penalty parameters $\gamma_0 = \tau_0 = \tau_N = 1$ and  $\gamma_N = -1$  satisfies the energy estimate  
\begin{equation}\label{eq:energy_stable_IBVP_case2}
\begin{split}
\frac{d}{dt}\mathcal{E}_b\left(t\right) = -a(w_1^2 + w_{P+1}^2 ) -ab\left(\left(\widetilde{D}_x\mathbf{u}\right)_{1}^2 + \left(\widetilde{D}_x\mathbf{u}\right)_{P+1}^2\right) \le 0. 
\end{split}
\end{equation}
\end{theorem}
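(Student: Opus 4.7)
The plan is to mirror the energy argument already used for the interface treatment (Theorem \ref{theorem:energy_stable}), but now using the modified first-derivative operator $\widetilde{D}_x$ from \eqref{eq:modified_SBP_D} and its dissipative SBP property given in Lemma \ref{lem:dissipative_D}. The whole argument rests on two reductions. First, observing that with the stated choice of penalty parameters $\gamma_0=\tau_0=\tau_N=1$, $\gamma_N=-1$, the scheme \eqref{eq:BoundarySemiDiscrete_shift_Case2} collapses to the auxiliary-variable form \eqref{eq:BC_R0_SemiDiscrete_shift2_BC2}, namely $\dot{\mathbf{u}}=a\widetilde{D}_x\mathbf{u}+\mathbf{w}$ and $\dot{\mathbf{w}}=a\widetilde{D}_x\mathbf{w}-H_x^{-1}A_b\mathbf{u}$, where I use that $a,b$ are constants so they commute with $\widetilde{D}_x$ and $H_x$. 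Second, the operator $A_b=\widetilde{D}_x^{T}bH_x\widetilde{D}_x$ is symmetric, which is the discrete analogue of the ultra-compatibility that drove the interface proof.

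Then I would differentiate the energy. Using symmetry of $H_x$ and $A_b$,
\begin{equation*}
\frac{d}{dt}\mathcal{E}_b(t)=2\mathbf{w}^{T}H_x\dot{\mathbf{w}}+2\mathbf{u}^{T}A_b\dot{\mathbf{u}}.
\end{equation*}
Substituting the evolution equations yields
\begin{equation*}
\frac{d}{dt}\mathcal{E}_b(t)=2a\,\mathbf{w}^{T}H_x\widetilde{D}_x\mathbf{w}-2\mathbf{w}^{T}A_b\mathbf{u}+2a\,\mathbf{u}^{T}A_b\widetilde{D}_x\mathbf{u}+2\mathbf{u}^{T}A_b\mathbf{w}.
\end{equation*}
Since $A_b=A_b^{T}$, the two cross terms $-2\mathbf{w}^{T}A_b\mathbf{u}$ and $+2\mathbf{u}^{T}A_b\mathbf{w}$ are equal scalars with opposite sign and cancel.

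What remains are two quadratic forms in $\widetilde{D}_x$. For the first, Lemma \ref{lem:dissipative_D} applied to $\mathbf{w}$ gives $2\mathbf{w}^{T}H_x\widetilde{D}_x\mathbf{w}=-(w_1^{2}+w_{P+1}^{2})$, so the contribution is $-a(w_1^{2}+w_{P+1}^{2})$. For the second, I rewrite $\mathbf{u}^{T}A_b\widetilde{D}_x\mathbf{u}=b(\widetilde{D}_x\mathbf{u})^{T}H_x\widetilde{D}_x(\widetilde{D}_x\mathbf{u})$ using the definition of $A_b$ and the scalar nature of $b$, and then apply Lemma \ref{lem:dissipative_D} to $\mathbf{v}=\widetilde{D}_x\mathbf{u}$ to obtain $2\mathbf{v}^{T}H_x\widetilde{D}_x\mathbf{v}=-(v_1^{2}+v_{P+1}^{2})$, which contributes $-ab\bigl((\widetilde{D}_x\mathbf{u})_1^{2}+(\widetilde{D}_x\mathbf{u})_{P+1}^{2}\bigr)$. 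Adding the two pieces delivers \eqref{eq:energy_stable_IBVP_case2}, and non-positivity follows from $a>0$ and $b>0$.

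The only delicate step is the bookkeeping verifying that the specified penalty parameters are exactly those needed to (i) absorb the boundary corrections at $x=B_+$ into $\widetilde{D}_x$ acting on both $\mathbf{u}$ and $\mathbf{w}$, and (ii) make $\widetilde{D}_{xx}=-H_x^{-1}A_b$ with $A_b$ symmetric, so that the cross terms cancel. Once the system is written in the form \eqref{eq:BC_R0_SemiDiscrete_shift2_BC2}, the rest is a direct application of Lemma \ref{lem:dissipative_D} twice; no Cauchy--Schwarz or bounding constants are required because $a$ and $b$ are constant and the estimate is sharp.
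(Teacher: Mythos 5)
Your proposal is correct and follows essentially the same route as the paper's own proof: rewrite the scheme in the auxiliary-variable form \eqref{eq:BC_R0_SemiDiscrete_shift2_BC2}, differentiate the energy, cancel the cross terms via the symmetry of $A_b=\widetilde{D}_x^T b H_x \widetilde{D}_x$, and apply Lemma \ref{lem:dissipative_D} twice (to $\mathbf{w}$ and to $\widetilde{D}_x\mathbf{u}$). Your version is in fact slightly more explicit than the paper's about the cross-term cancellation and about feeding $\widetilde{D}_x\mathbf{u}$ into the lemma, but the substance is identical.
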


\begin{proof}
We take the time derivative of the energy, and we have
\begin{align}\label{eq:time_derivative_energy_BC2}
\frac{d}{dt}\mathcal{E}_b\left(t\right)  &= \Big\langle\mathbf{w}, \frac{d}{dt}\mathbf{w} \Big\rangle_{H_{x}} +  \Big\langle\frac{d}{dt}\mathbf{w}, \mathbf{w}  \Big\rangle_{H_{x}} + \Big\langle\mathbf{u}, \frac{d}{dt}\mathbf{u} \Big\rangle_{{A}_b } +  \Big\langle\frac{d}{dt}\mathbf{u}, \mathbf{u}  \Big\rangle_{{A}_b }.
\end{align}
Using \eqref{eq:BC_R0_SemiDiscrete_shift2_BC2}, we replace the time derivatives in the right hand side of \eqref{eq:time_derivative_energy_BC2}, and obtain 
\begin{equation}\label{eq:time_derivative_energy_0_BC2}
\begin{split}
\frac{d}{dt}\mathcal{E}_b\left(t\right)  
& = a\Big\langle\mathbf{w}, \widetilde{D}_x\mathbf{w} \Big\rangle_{H_{x}} +  a\Big\langle \widetilde{D}_x\mathbf{w}, \mathbf{w}  \Big\rangle_{H_{x}}
    -\Big\langle\mathbf{w}, \mathbf{u} \Big\rangle_{{A}_b } -  \Big\langle \mathbf{u}, \mathbf{w}  \Big\rangle_{{A}_b }\\
& + a\Big\langle\mathbf{u}, \widetilde{D}_x\mathbf{u} \Big\rangle_{{A}_b } +  a\Big\langle \widetilde{D}_x\mathbf{u}, \mathbf{u}  \Big\rangle_{{A}_b }
   + \Big\langle\mathbf{w}, \mathbf{u} \Big\rangle_{{A}_b } +  \Big\langle \mathbf{u}, \mathbf{w}  \Big\rangle_{{A}_b }.
\end{split}
\end{equation}
By Lemma \ref{lem:dissipative_D} and \eqref{eq:modified_SBP_D}, we have
\begin{equation*}
\begin{split}
\frac{d}{dt}\mathcal{E}_b\left(t\right)  
& = -a(w_1^2 + w_{P+1}^2 )+ab\Big\langle\widetilde{D}_x\mathbf{u}, \widetilde{D}_x\left(\widetilde{D}_x\mathbf{u}\right) \Big\rangle_{H_{x}} +  ab\Big\langle\widetilde{D}_x\left(\widetilde{D}_x\mathbf{u}\right), \widetilde{D}_x\mathbf{u} \Big\rangle_{H_{x}}.
\end{split}
\end{equation*}
Using  Lemma \ref{lem:dissipative_D} again gives the desired result \eqref{eq:energy_stable_IBVP_case2}.
\end{proof}

To obtain an energy estimate for the semidiscretisation with multiple elements and physical boundary conditions, we shall combine the analysis in Theorem \ref{theorem:energy_stable} for the numerical interface treatment and  \ref{theorem:energy_stable_IBVP_case2} for the numerical boundary treatment. 

\begin{remark}\label{remark_case3}
We note that similar analysis holds for {Case 3: $a < 0$ and $ c< 0$}. In Case 3, we have $\lambda_2 < \lambda_1 < 0$, and  need to impose the  two boundary conditions in \eqref{eq:BC_case3} or  \eqref{eq:BC_case3_0} at $x = B_{-}$, and  no boundary condition  at $x = B_{+}$. 
\end{remark}

\section{Error estimates}

In this section, we derive error estimates for the semidiscretisation with multiple elements and physical boundary conditions, and focus on $h$-convergence in energy norms. In the stability analysis, two discrete energy norms are used for positive and negative $c$. In the following error analysis, we also consider these two cases separately.    For the sake of simplied notation, we use the two elements model $\Omega=\Omega_-\cup\Omega_+$ with both interface conditions at $\Omega_-\cap\Omega_+$ and physical boundary conditions at $\partial\Omega$.

Let the vector $\mathbf{U}_{ex}$ be the exact solution evaluated on the quadrature points, then the pointwise error $\mathbf{E}=\mathbf{U}_{ex}-\mathbf{U}$ satisfies the error equation

\begin{equation}
\begin{split}
        \label{EE1}
           \frac{{d}}{{dt}}\left( \frac{{d}}{{dt}} \mathbf{E}   - \mathbf{a} \widetilde{\mathbf{D}}\mathbf{E}  \right) &- \mathbf{D}\left(\mathbf{a}\left( \frac{{d}}{{dt}} \mathbf{E}   - \mathbf{a}\widetilde{\mathbf{D}}\mathbf{E}  \right) + \mathbf{b}\mathbf{D}\mathbf{E} \right) = \mathbf{P_i} +\mathbf{P_b} +\mathbf{T}. 
\end{split}
\end{equation} 
The term $\mathbf{P_i}$ takes the form 
\begin{equation*}
\mathbf{P_i}=- \frac{1}{2} \mathbf{H}^{-1}\mathbf{\widehat{B}} \left(\mathbf{a}\left( \frac{{d}}{{dt}} \mathbf{E}   - \mathbf{a}\widetilde{\mathbf{D}}\mathbf{E}  \right) + \mathbf{b} \mathbf{D}\mathbf{E} \right) -  \mathbf{H}^{-1} \left( -\frac{1}{2}\mathbf{D}^T \mathbf{b}\mathbf{\widehat{B}}  +  \frac{1}{4}\mathbf{\widehat{B}}^T \mathbf{b}\mathbf{H}^{-1}\mathbf{\widehat{B}}  \right)\mathbf{E},
\end{equation*} 
and corresponds to the numerical interface treatment \eqref{eq:InterfaceSemiDiscrete_shift}. 
The precise form of $\mathbf{P_b}$ depends on the sign of the variable $c$. When $c>0$, we have  
\begin{equation*}
\mathbf{P_b}=\begin{bmatrix}
          \Lambda_2 H_x^{-1}\el \el^T\left( \frac{d}{dt}{\mathbf{E}^{-}} - \lambda_1 D_x \mathbf{E}^{-}\right) \\
          -\Lambda_1 H_x^{-1}  \er \er^T\left(\frac{d}{dt}\mathbf{E}^{+} - \lambda_2 D_x \mathbf{E}^{+}\right)
          \end{bmatrix},
\end{equation*}
and corresponds to the numerical boundary treatment   \eqref{eq:BoundarySemiDiscrete_shift_Case1}. When $c<0$, for constant coefficient problems, we instead have
\footnotesize
\begin{equation*}
\mathbf{P_b}=\begin{bmatrix}
          0 \\
            { H_{x} ^{-1}  \er \er^Ta\left(  \frac{d \mathbf{E^+}}{d t} - a\left(D_x-   {H_{x}^{-1}  \er \er^T} \right) \mathbf{E^+} \black \right) \black } + { H_{x}^{-1} \left(- D_{x}^T b  \er \er^T +  \er \er^T bH_{x}^{-1}\er \er^T \right)   \mathbf{E^+}}
          \end{bmatrix},
\end{equation*}
\normalsize
which corresponds to the numerical boundary treatment  \eqref{eq:BoundarySemiDiscrete_shift_Case2}.

The last term $\mathbf{T}$ contains  the truncation error on the quadrature points. We now determine the order of $\mathbf{T}$ according to the accuracy property of the spectral difference operators in \eqref{EE1}. 
On the left-hand side of \eqref{EE1}, the operator $\widetilde{\mathbf{D}}$ in the first term leads to a truncation error $\mathcal{O}(\Delta x^P)$, where $\Delta x$ is the element width. In the second term, applying the spectral difference operator twice results in a truncation error $\mathcal{O}(\Delta x^{P-1})$. On the right-hand side of  \eqref{EE1}, the penalty terms also contribute to the truncation error. In $\mathbf{P_i}$, the spectral difference operator in the first term again leads to a truncation error $\mathcal{O}(\Delta x^P)$. In combination with the $\Delta x^{-1}$ factor in $\mathbf{H}^{-1}$, the truncation error becomes $\mathcal{O}(\Delta x^{P-1})$. The operators in the second term in $\mathbf{P_i}$ introduce no truncation error. Similarly, the truncation error by the operators in $\mathbf{P_b}$ is $\mathcal{O}(\Delta x^{P-1})$ for both positive and negative $c$. As a consequence, the dominating truncation error of the semidiscretisation is $\mathcal{O}(\Delta x^{P-1})$, i.e. each component of $\mathbf{T}\sim\mathcal{O}(\Delta x^{P-1})$.

Next, we define the quantities
\[
\mathbf{E_w}=\begin{cases}
\frac{d}{dt}\mathbf{E}-a\widehat{\mathbf{D}}\mathbf{E},\quad c<0\\
\frac{d}{dt}\mathbf{E},\quad c>0
\end{cases},\quad
A = \begin{cases}
A_b,\quad c<0\\
A_c,\quad c>0
\end{cases},
\]
where 
\[
\widehat{\mathbf{D}}=\widetilde{\mathbf{D}}-\begin{pmatrix}
0 & 0 \\
0  & {H_x^{-1}  \er \er^T }
\end{pmatrix}.
\]
Here again, we consider constant coefficient problems when $c<0$. We define the error in the energy norm  
\begin{equation}\label{deltac}
\delta= \sqrt{\Big\langle\mathbf{E_w}, \mathbf{E_w} \Big\rangle_{H} +   \Big\langle\mathbf{E}, \mathbf{E} \Big\rangle_{A}},
\end{equation}
which can be bounded by the following theorem.
\begin{theorem}\label{thm_err}
For the semidiscretisation \eqref{eq:InterfaceSemiDiscrete_shift} and \eqref{eq:BoundarySemiDiscrete_shift_Case1} with $c>0$ (or \eqref{eq:BoundarySemiDiscrete_shift_Case2} when $c<0$), the error in the energy norm \eqref{deltac} satisfies 
\[
\delta \leq  \widetilde{C} \Delta x^{P-1} \int_0^t |u(\cdot,\tau)|_{H^{p+1}(\Omega)} d\tau,
\]
where $\widetilde{C} >0$ is independent of $\Delta x$.
\end{theorem}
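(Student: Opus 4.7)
The plan is to mimic the stability analyses of Theorems~\ref{theorem:energy_stable}/\ref{theorem:energy_stable_p} (for the interface) and Theorems~\ref{theorem:energy_stable_IBVP_case1}/\ref{theorem:energy_stable_IBVP_case2} (for the boundary) applied now to the error equation \eqref{EE1}. The left-hand side of \eqref{EE1} has exactly the structure of the semi-discrete scheme \eqref{eq:InterfaceSemiDiscrete_shift}, and the terms $\mathbf{P_i}+\mathbf{P_b}$ on the right-hand side are precisely the penalty contributions already embedded in the stable scheme. Thus, rearranging \eqref{EE1} so that $\mathbf{P_i}$ and $\mathbf{P_b}$ are moved back to the left recovers the homogeneous scheme, but now driven by a consistency forcing $\mathbf{T}$.

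The first step is to introduce the auxiliary variable $\mathbf{E_w}$ as defined in the statement and rewrite the error system in first-order-in-time form analogous to \eqref{eq:InterfaceSemiDiscrete3} (when $c>0$) or \eqref{eq:InterfaceSemiDiscrete3_c0} (when $c<0$), with an extra forcing term coming from $\mathbf{T}$. Next, I would differentiate $\delta^2 = \langle \mathbf{E_w},\mathbf{E_w}\rangle_{\mathbf{H}} + \langle \mathbf{E},\mathbf{E}\rangle_A$ in time and substitute the error dynamics. By Lemmas~\ref{lem:anti_symmetry_D} and \ref{lem:symmetry_A} (resp.\ Lemma~\ref{lem:dissipative_D} in the $c<0$ case), all the skew-symmetric convective contributions cancel, and the boundary/interface penalty pieces produce non-positive contributions, exactly as in the proofs of Theorems~\ref{theorem:energy_stable_p} and \ref{theorem:energy_stable_IBVP_case2}. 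What remains on the right-hand side is the inner product $2\langle \mathbf{E_w},\mathbf{T}\rangle_{\mathbf{H}}$ (plus, in the $c>0$ case, an analogous term pairing $\mathbf{E_w}$ with the truncation residue).

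I would then apply Cauchy--Schwarz,
\begin{equation*}
\frac{d}{dt}\delta^2 \le 2\,\|\mathbf{E_w}\|_{\mathbf{H}}\,\|\mathbf{T}\|_{\mathbf{H}} \le 2\delta\,\|\mathbf{T}\|_{\mathbf{H}},
\end{equation*}
so that $\frac{d}{dt}\delta \le \|\mathbf{T}\|_{\mathbf{H}}$. Integrating in time from $0$ to $t$ with zero initial error gives $\delta(t)\le \int_0^t \|\mathbf{T}(\tau)\|_{\mathbf{H}}\,d\tau$. The final step is to bound $\|\mathbf{T}\|_{\mathbf{H}}$. Since each component of $\mathbf{T}$ is $\mathcal{O}(\Delta x^{P-1})$ and the dominant derivative involved is $\partial^{P+1}u/\partial x^{P+1}$ (per the accuracy lemma for $D_x$ and $D_{xx}^{(b)}$), summing over quadrature nodes with the weights in $\mathbf{H}$ yields $\|\mathbf{T}(\tau)\|_{\mathbf{H}} \le \widetilde{C}\,\Delta x^{P-1}\,|u(\cdot,\tau)|_{H^{P+1}(\Omega)}$, which after integration gives the stated bound.

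The main obstacle is the careful accounting of the truncation terms and their norms: the penalty contributions in $\mathbf{P_i},\mathbf{P_b}$ already sit on the left as part of the stable operator, so one must check that, upon pairing with $\mathbf{E_w}$, the only surviving right-hand side terms are (i) non-positive dissipation from the boundary/interface penalties and (ii) the inner product with $\mathbf{T}$. A secondary technical point is the variable-coefficient case when $c>0$: there the analogue of Theorem~\ref{theorem:energy_stable_IBVP_case1} produces a dissipative boundary term $-2\sqrt{b}\|\cdot\|^2$ that can be discarded for the inequality, while the interior inner products are handled by Theorem~\ref{theorem:energy_stable_p}; provided the $C_{a,b}$ constant from the continuous setting is absorbed into a Gronwall step (with constants hidden in $\widetilde{C}$), the stated $h$-convergence estimate follows.
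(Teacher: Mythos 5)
Your proposal is correct and follows essentially the same route as the paper: apply the energy (stability) analysis to the error equation \eqref{EE1} so that the interface/boundary penalty contributions are non-positive, leaving $\frac{d}{dt}\delta^2 \lesssim \langle \mathbf{E_w},\mathbf{T}\rangle_{\mathbf{H}}$, then use Cauchy--Schwarz, divide by $\delta$, integrate in time, and bound $\|\mathbf{T}\|_{\mathbf{H}}$ by $\widetilde{C}\Delta x^{P-1}$ times the appropriate norm of $\partial^{P+1}u/\partial x^{P+1}$. The only (immaterial) difference is that the paper's intermediate bound is stated with the sup-norm of the $(P+1)$st derivative before passing to the Sobolev seminorm in the final statement, while you work with $|u|_{H^{P+1}(\Omega)}$ throughout.
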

\begin{proof}
We apply the energy analysis to the error equation \eqref{EE1},
\[
\frac{d}{dt}\delta^2 \leq \Big\langle \mathbf{E_w}, \mathbf{T} \Big\rangle_{H}\leq C {\delta}\Delta x^{P-1} \left|\frac{d^{P+1}u}{dx}\right|_\infty.
\]
We then have 
\[
\frac{d}{dt}{\delta} \leq  \widetilde{C} \Delta x^{P-1} \left|\frac{d^{P+1}u}{dx}\right|_\infty,
\]
and consequently
\[
{\delta} \leq  \widetilde{C} \Delta x^{P-1} \int_0^t \left|\frac{d^{P+1}u}{dx}(\cdot,\tau)\right|_\infty  d\tau.
\]
\end{proof}
The above error analysis can easily be generalised to the case with multiple elements, and the error in the energy norm also converges with rate $P-1$. We thus denote this rate as optimal. 
\begin{remark}
The theory and numerical method can be extended to higher space dimensions (2D and 3D) using tensor products of quadrilateral and hexahedral  elements. 
\end{remark}

\section{Numerical experiments}
In this section we present numerical experiments, in (1+1)-dimensions and (2+1)-dimensions,  to verify the theory presented in this study.
We will demonstrate spectral accuracy and verify both $h$- and $p$-convergence. We will consider first  (1+1)-dimensions (one space dimension) and proceed later to (2+1)-dimensions (two space dimensions).
\subsection{Numerical examples in one space dimension}
We consider the  shifted wave equation \eqref{1deqn}.
The numerical experiments here are designed to verify the discrete conservative principle Theorem \ref{theo:conservative_interface_treatment}, investigate numerical stability and accuracy.
\paragraph{Conservation and efficacy test}
We demonstrate the effectiveness of high order accuracy and verify that our method is conservative in the sense of Theorem \ref{theo:conservative_interface_treatment}.
 We consider the initial conditions 
\begin{equation}\label{eq:init_condition}
u(x,0) =u_0(x) =10e^{-\frac{(x - 10)^2}{0.08}}, \quad \frac{\partial u}{\partial t} \Big|_{t=0} = 0.
\end{equation}
The constant coefficients Cauchy problem \eqref{1deqn} with \eqref{eq:init_condition} has the analytical solution
\begin{equation}
u(x,t) = \frac{\lambda_1}{\lambda_1-\lambda_2}u_0(x-\lambda_2 t) - \frac{\lambda_2}{\lambda_1-\lambda_2}u_0(x-\lambda_1 t),
\end{equation}
where $\lambda_1 = a + \sqrt{b}, \quad \lambda_2 = a - \sqrt{b}.$
We consider two different material constants $a =  -0.5, b = 1$ and $a = -1, b = 0.25$. These two choices  correspond to $c = b - a^2$ positive and negative, respectively, and are referred to as subsonic  and supersonic  regimes. 
We consider the bounded domain $x\in [0,20]$ with the boundary condition \eqref{eq:BC_case1} for $c\ge 0$, \eqref{eq:BC_case3_0} for $c<0$, and homogeneous boundary data $g_1(t) = g_2(t) =0$.

To demonstrate the effectiveness of spectral accuracy, we consider two different discretisation parameters, polynomial approximation of degree $P = 100$ with $N =5$ uniform elements, and $P =  4$ with $N =101$ uniform elements. Both choices of numerical approximation parameters yield the same amount of degrees of freedom (DoF), $\mathrm{DoF} =(P+1)\times N=505$,  to be evolved.
We set the constant time-step $\Delta{t} = 5.4954\times 10^{-4}$ and evolve the solutions until the final time, $T = 20$. The snapshots of the solutions are displayed in Figures \ref{fig:supersonic_u_1D}--\ref{fig:subsonic_u_1D}, showing the evolutions of the initial Gaussian profile. 

In Figure \ref{conservationplot_00} we display the numerical errors.  For the same $\mathrm{DoF}=505$, the numerical error for $P = 100$  is significantly smaller than the numerical error for lower order polynomial approximation $P = 4$ for all $t>0$. At the final time $T =20$, the numerical errors differ by several orders of magnitude $\sim 10^{9}$, since $\|e\|_{L^2} \sim 10^{-11}$ for $P = 100$ and  $\|e\|_{L^2} \sim 10^{-2}$ for $P = 4$. In contrast, the computational time of $P=100$ is only four times the computational time of $P=4$ due to the difference in the sparsity of the discretisation operators. This demonstrates the remarkable efficiency of stable very high order methods.

To verify discrete conservation principle, that is Theorem \ref{theo:conservative_interface_treatment}, we consider the spatial domain $x\in [0,20]$ with periodic  boundary conditions $u(0,t) = u(20,t)$, and  the final time $T = 20$. The discretisation parameters are the same as above. For each time-step, we compute $w = \Big\langle\mathbf{1}, \mathbf{W} \Big\rangle_{H}$. The  evolution of $w(t + \Delta t) - w(t)$  is depicted in Figure \ref{conservationplot}. We observe that for each $t$ the quantity $w(t + \Delta t) - w(t)$ is zero up to machine precision, thus the method is conservative. Note that the numerical conservation principle is independent of the polynomial degree $P$ and number of elements $N$.

\begin{figure}[h!]
{\includegraphics[width=0.245\textwidth]{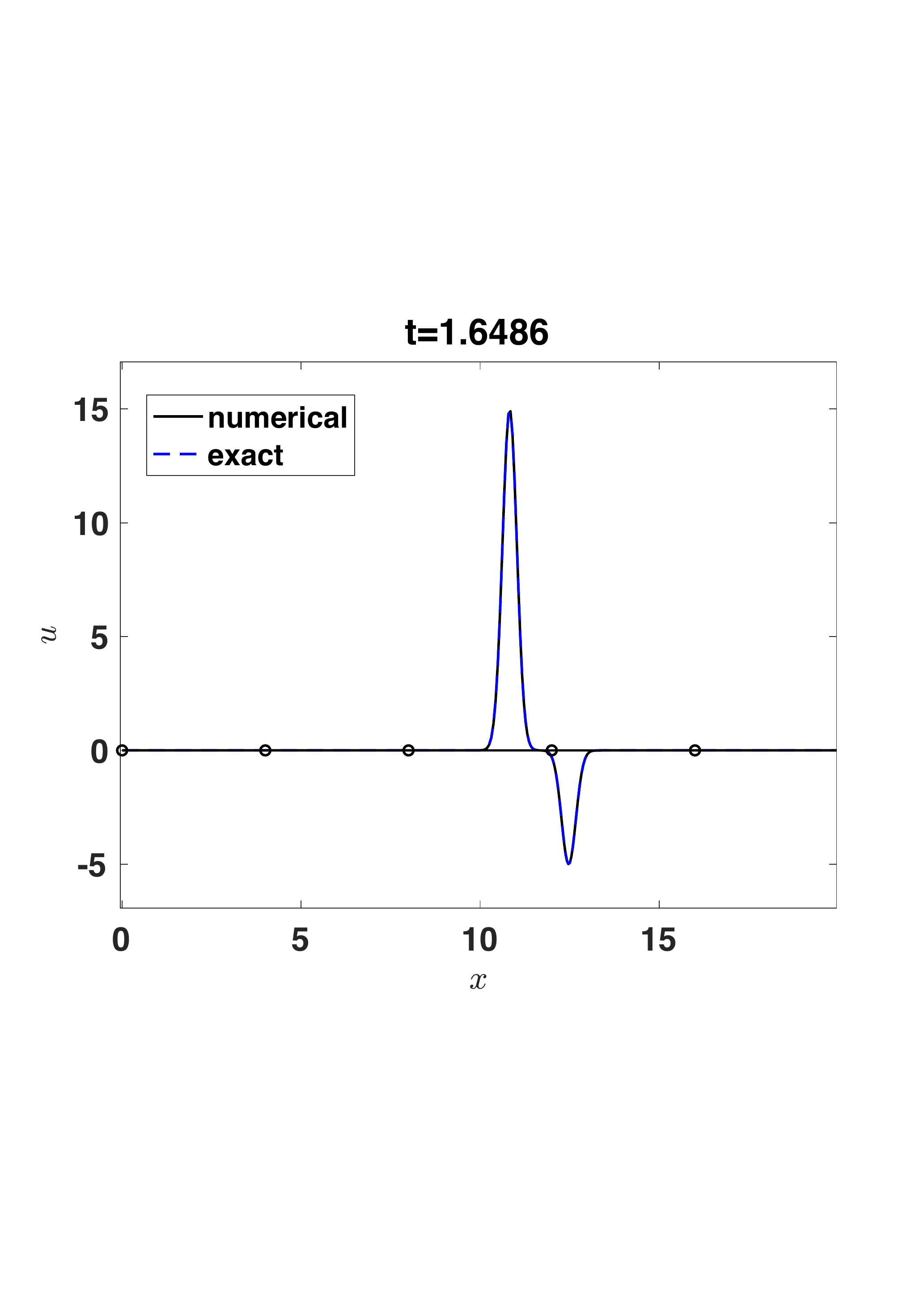}}
{\includegraphics[width=0.245\textwidth]{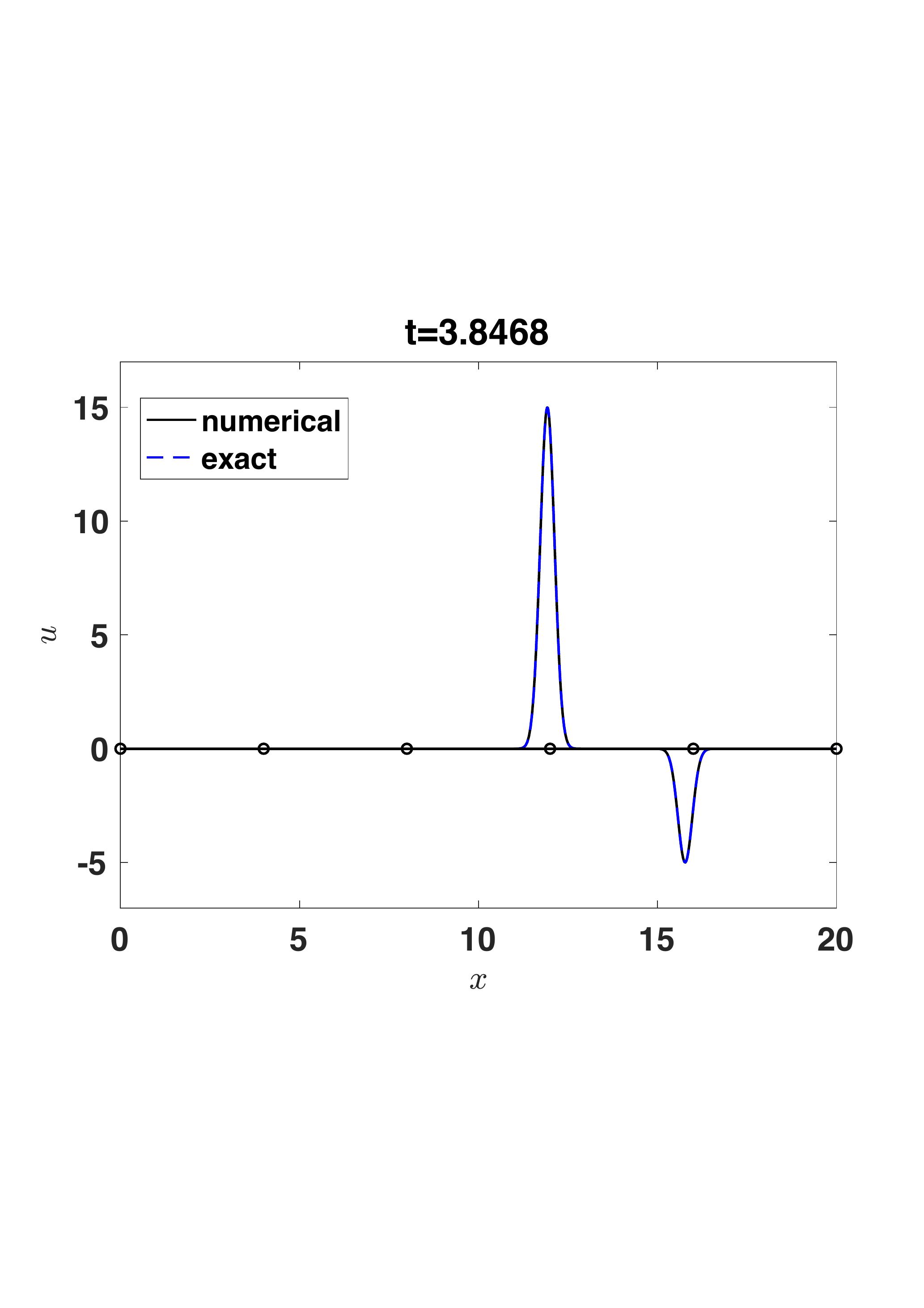}}
{\includegraphics[width=0.245\textwidth]{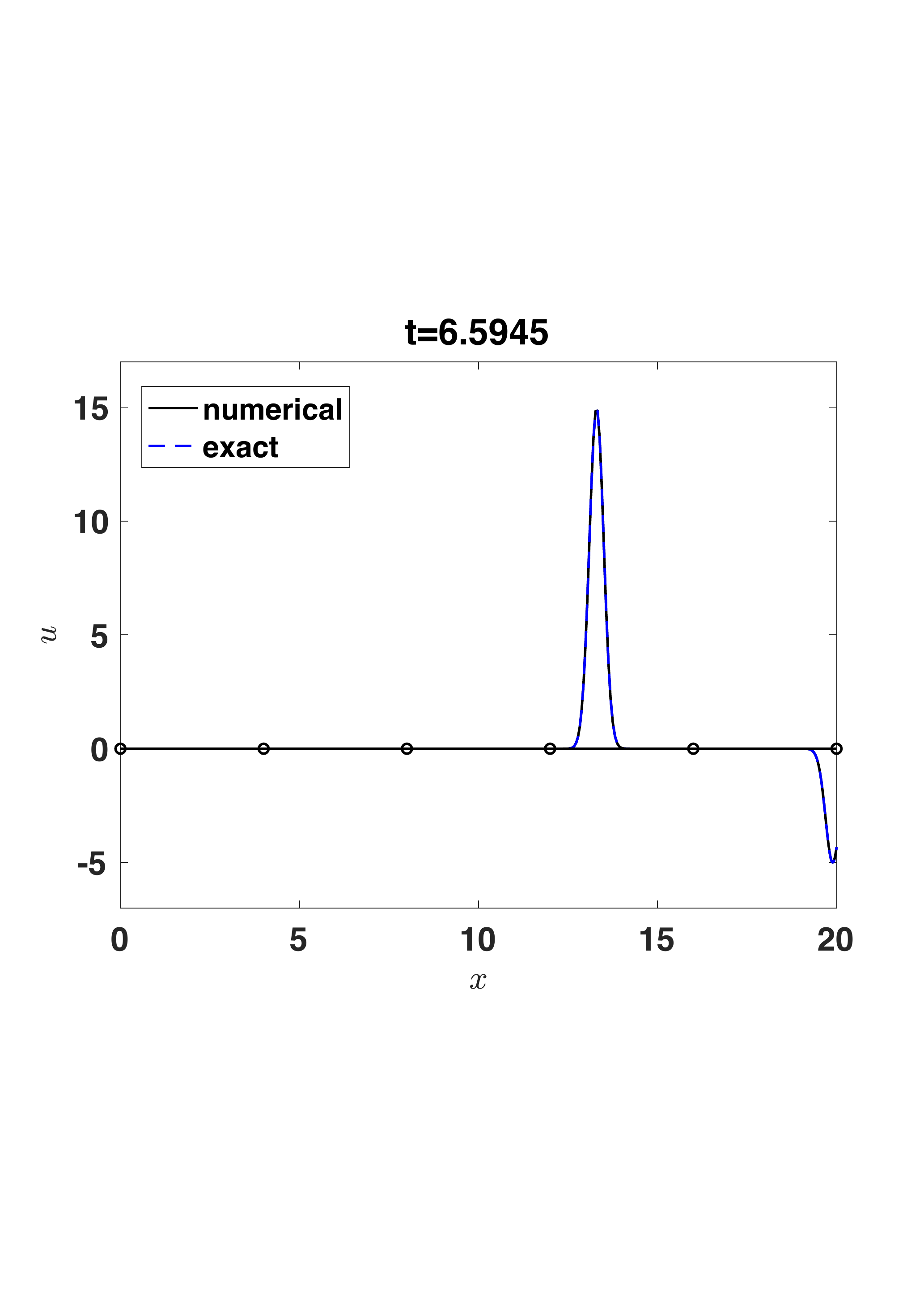}}
{\includegraphics[width=0.245\textwidth]{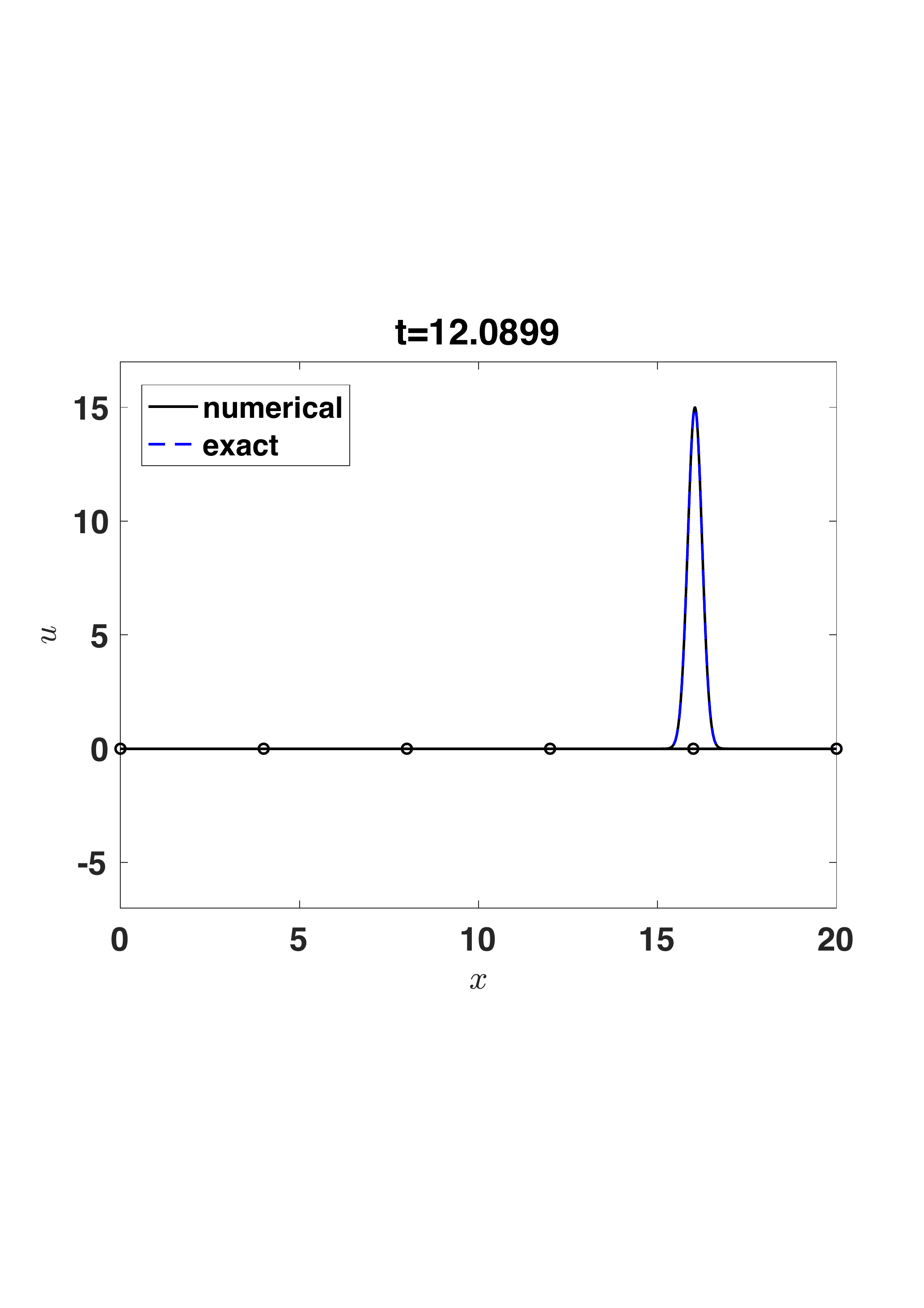}}
    \caption{Snapshots of the numerical and exact solution $u(x,t)$ for a supersonic regime for $a = -1, b = 0.25$.}
    \label{fig:supersonic_u_1D}
\end{figure}
\begin{figure}[h!]
{\includegraphics[width=0.245\textwidth]{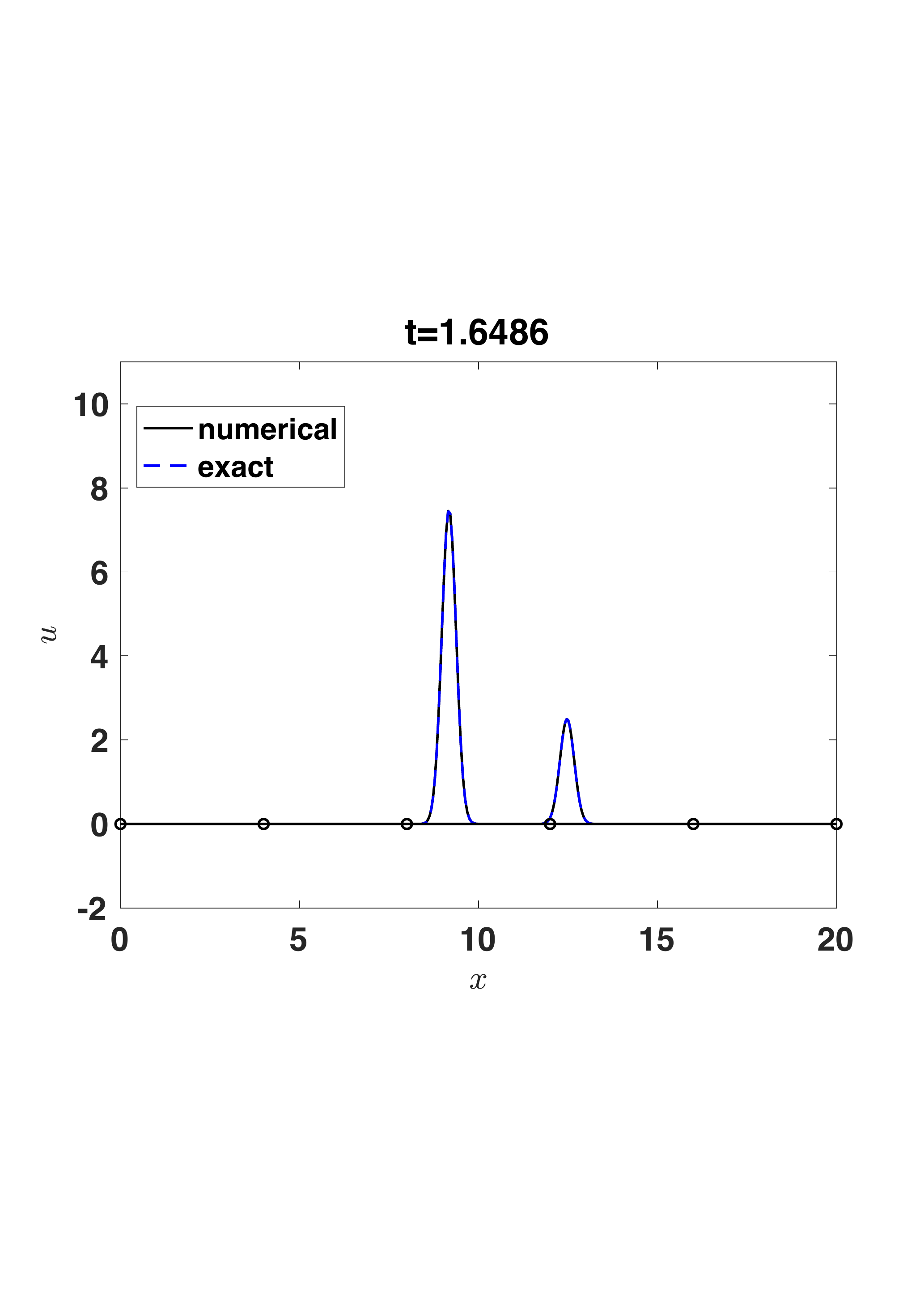}}
{\includegraphics[width=0.245\textwidth]{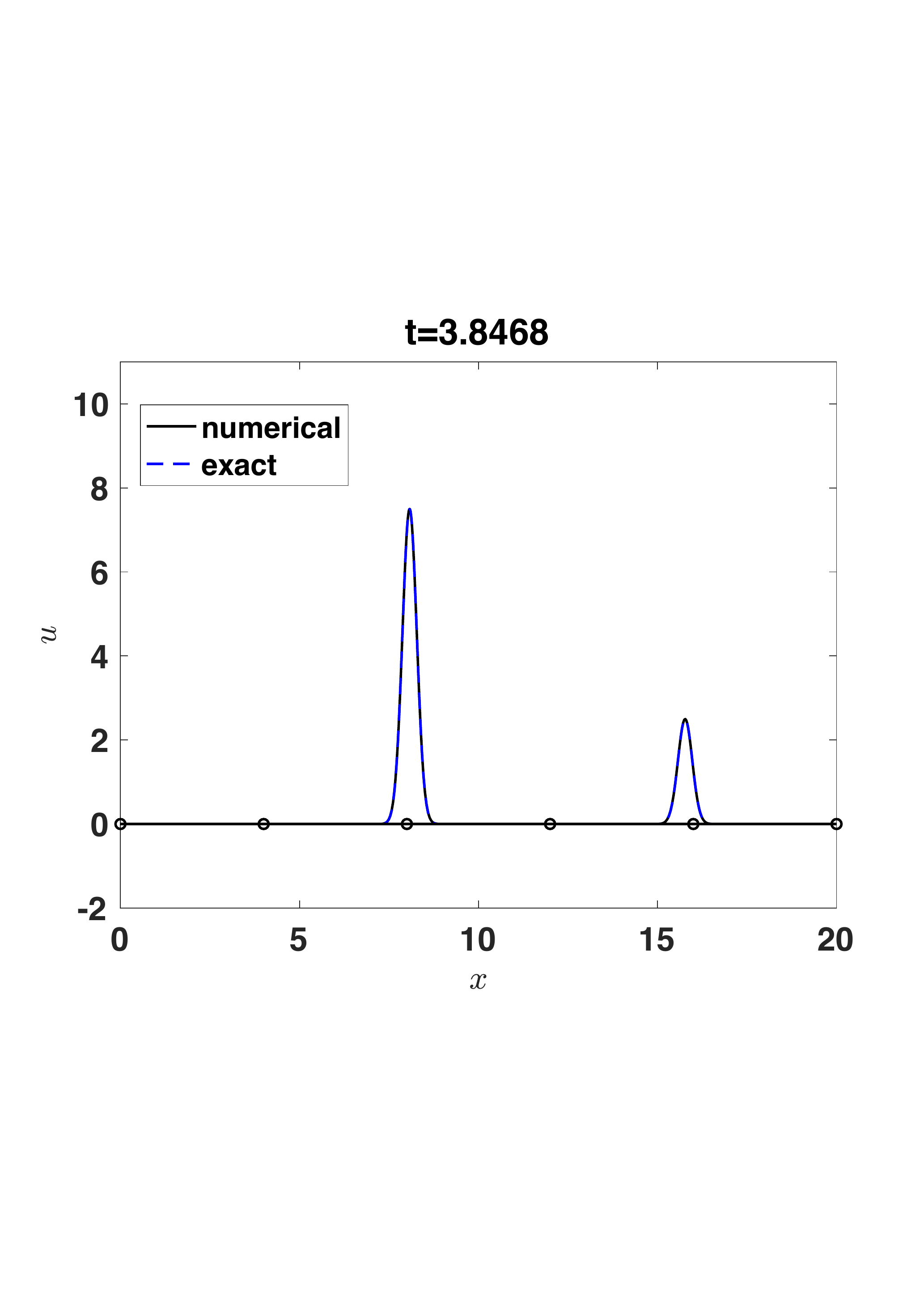}}
{\includegraphics[width=0.245\textwidth]{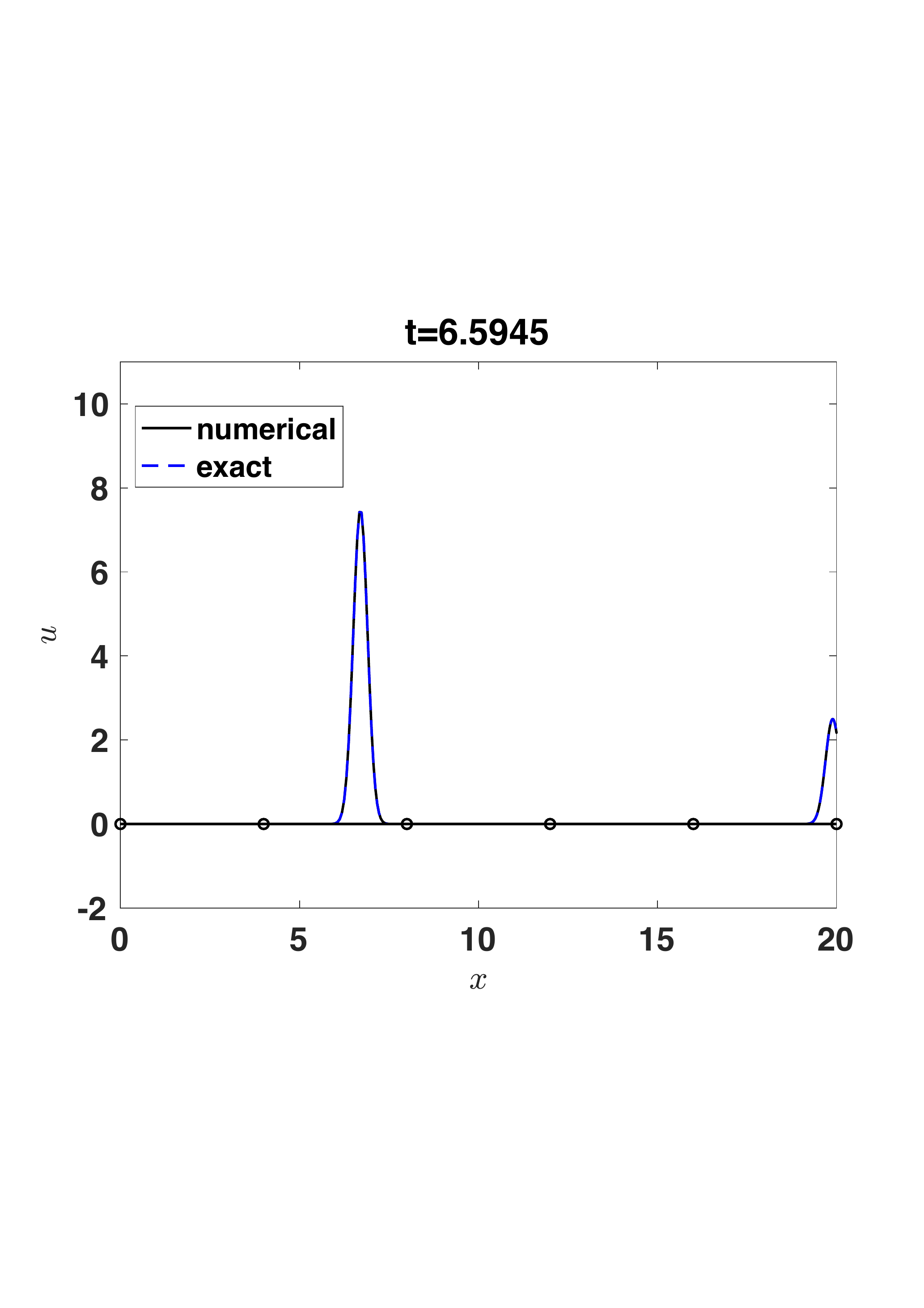}}
{\includegraphics[width=0.245\textwidth]{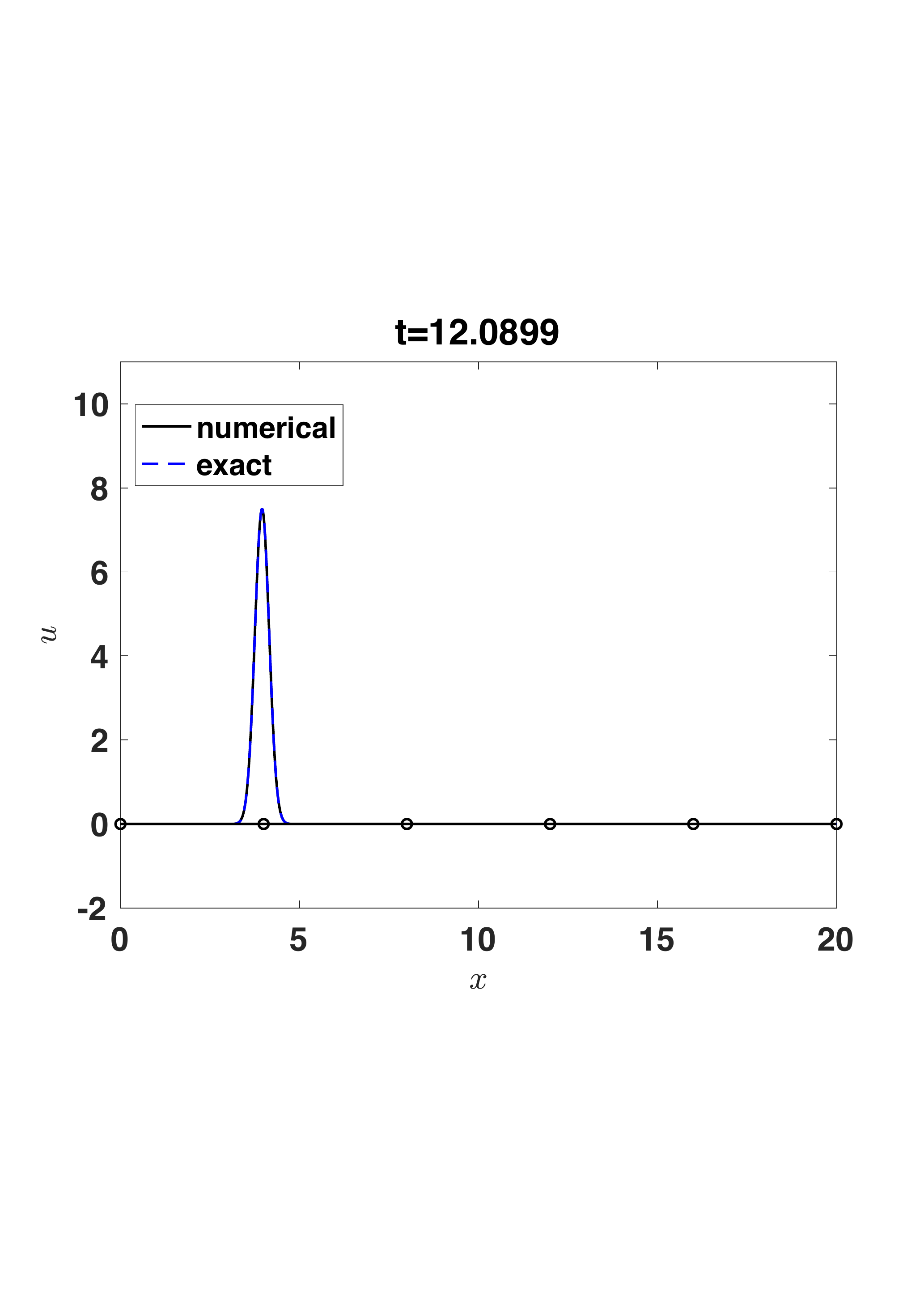}}
    \caption{Snapshots of the numerical and exact solution $u(x,t)$ for a subsonic regime for $a= -0.5, b = 1$.}
    \label{fig:subsonic_u_1D}
\end{figure}

\begin{figure}[H]
\begin{subfigure}{.49\textwidth}
\centering
\includegraphics[width=\linewidth]{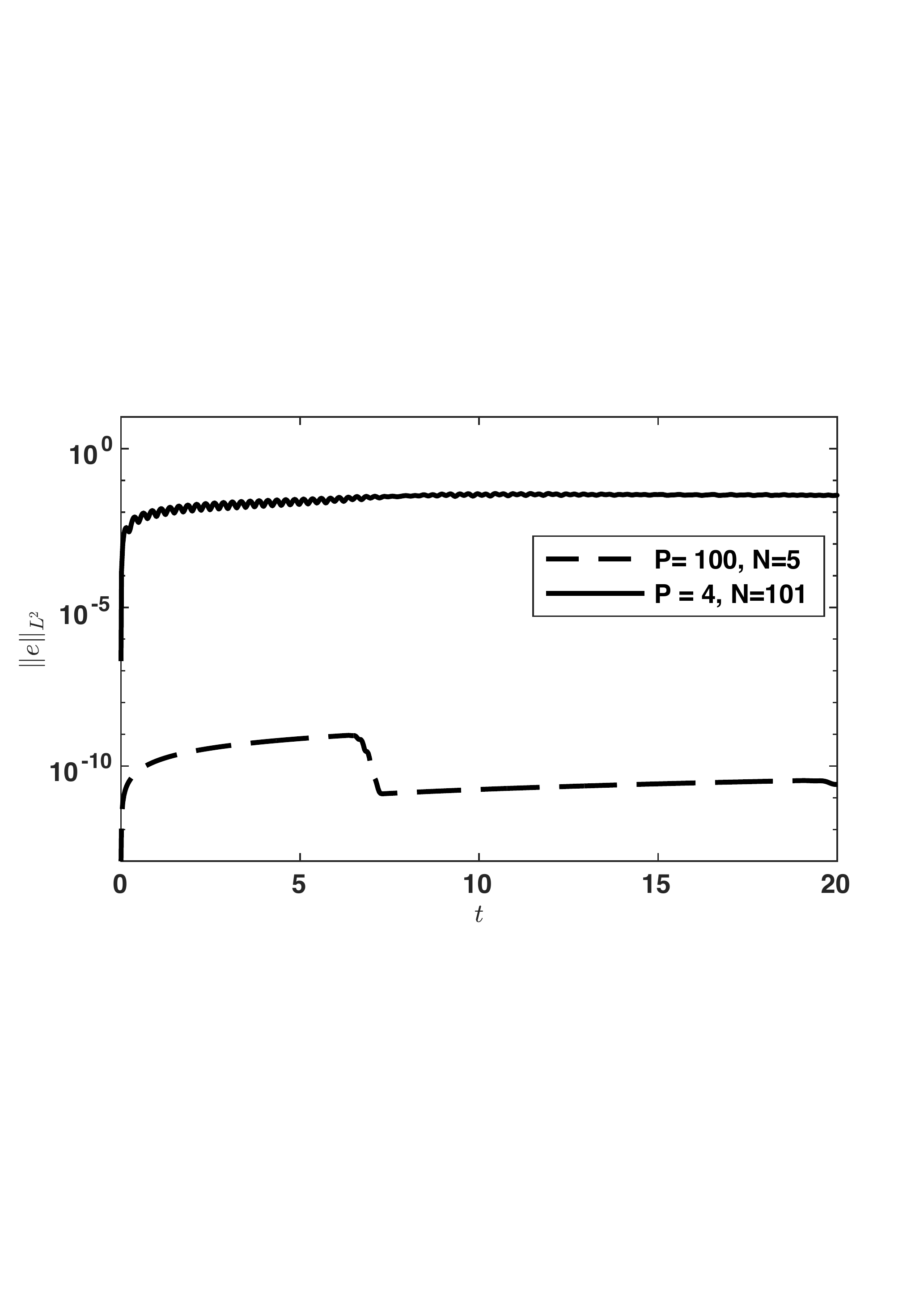}
\caption{Subsonic}
\end{subfigure}
\hfill
\begin{subfigure}{.49\textwidth}
\centering
\includegraphics[width=\linewidth]{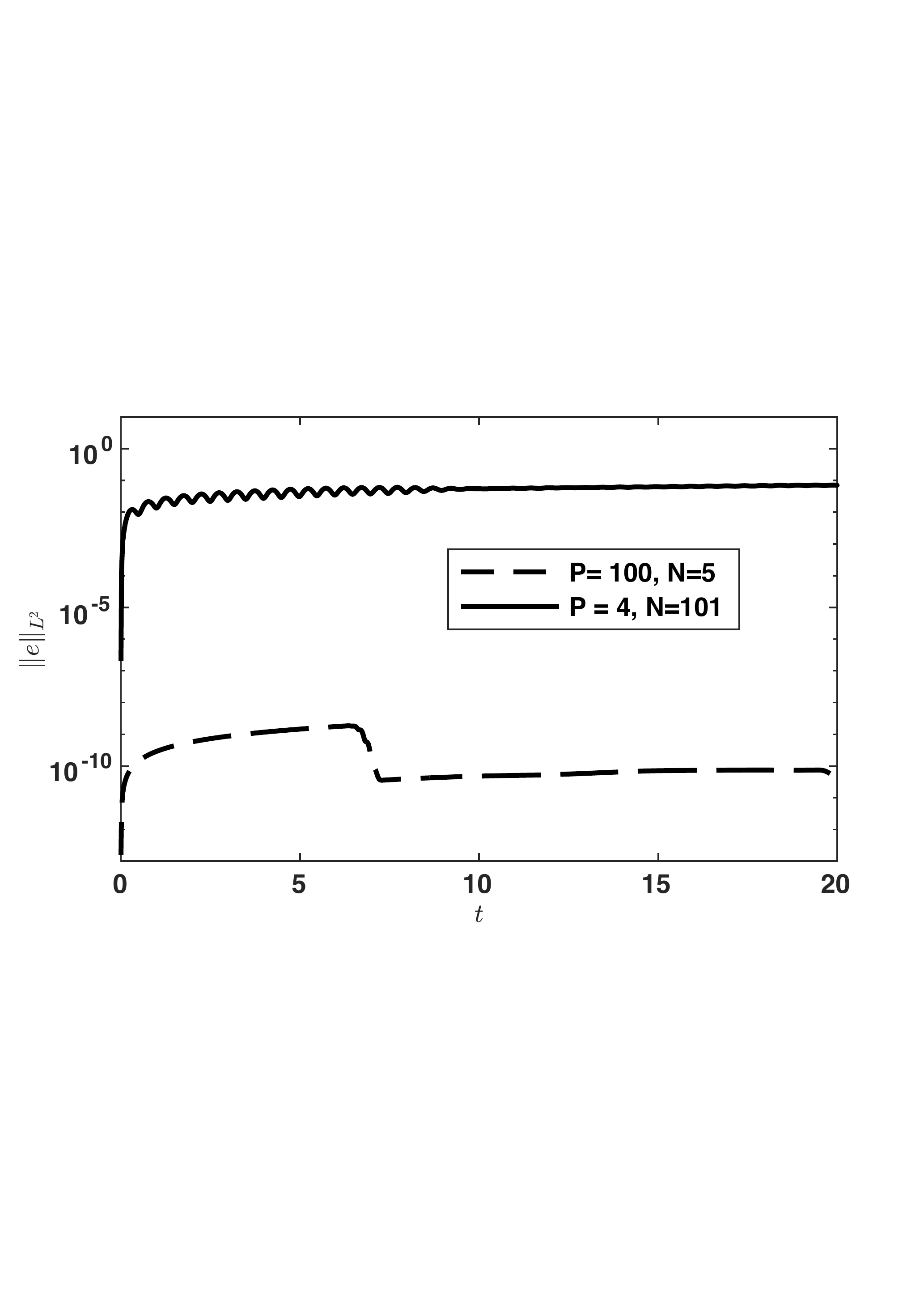}
\caption{Supersonic}
\end{subfigure}
\caption{Evolution of the numerical errors for  $P = 4, 100$ degrees polynomial approximation and $\mathrm{DoF} = 505$.} 
\label{conservationplot_00}
\end{figure}

\begin{figure}[H]
\begin{subfigure}{.49\textwidth}
\centering
\includegraphics[width=\linewidth]{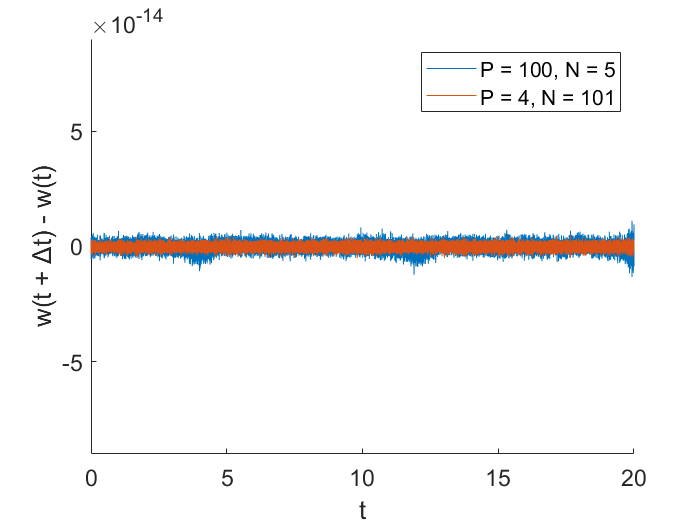}
\caption{Subsonic}
\end{subfigure}
\hfill
\begin{subfigure}{.49\textwidth}
\centering
\includegraphics[width=\linewidth]{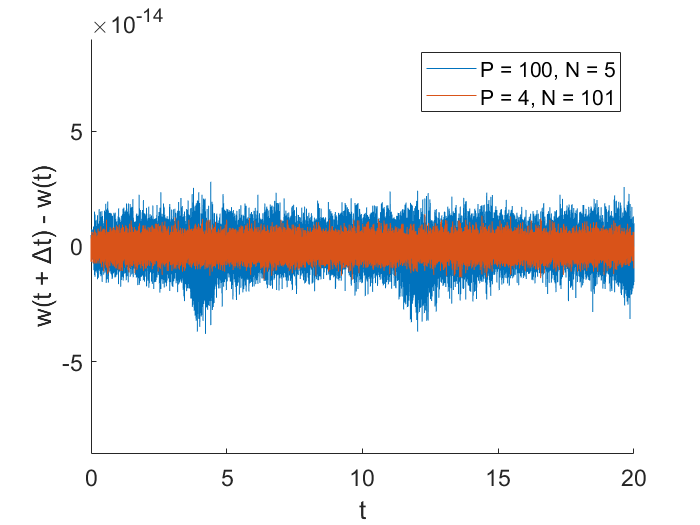}
\caption{Supersonic}
\end{subfigure}
\caption{Discrete conservation principle.} \label{conservationplot}
\end{figure}

\paragraph{Accuracy and convergence of numerical errors}
We investigate the convergence rate of our method by solving a problem similar to the model problem in Sec. 4.1 in \cite{Zhang2019}. Specifically, we solve \eqref{1deqn} in the unit interval $x\in [0,1]$ and choose the exact solution
\begin{equation}\label{exact_sol}
u(x,t) = \cos(2\sqrt{b}\pi t)\sin(2\pi(x+at)), \quad t \geq 0,
\end{equation}
where $a,b$ are the material constants. We consider three sets of the material constants $a=-0.5$,  $b=1$; $a = -1$, $b=0.25$; $a=-0.5$, $b = 0.25$,  corresponding to $c=b-a^2$ positive, negative and equal to zero, respectively. Similar to the previous numerical experiment, we refer to these three cases as subsonic $(c > 0)$, supersonic $(c <0)$ and sonic $(c = 0)$ regimes. Depending on the type of the regime, we have the IBVP \eqref{1deqn} with either \eqref{eq:BC_case1} or \eqref{eq:BC_case3} as its boundary condition with nonzero boundary data, $g_1(t), g_2(t)$, generated by the exact solution \eqref{exact_sol}. We also consider the case where we impose the periodic boundary condition $u(0,t) = u(1,t)$. These two types of boundary conditions will be considered separately in our numerical experiments.

We discretise the unit interval domain into uniform elements of size $\Delta{x}$, where $\Delta{x} = 0.2, 0.1$, $0.05, 0.025$, and we construct the spectral difference operators for polynomial degree $P = 1,2,3,4,5,6$. The time-step is computed using
\begin{equation} \label{timestepeq}
\Delta t = \frac{\mathrm{CFL}}{(2P + 1)(1 + \sqrt{2})\max(|\lambda_1|,|\lambda_2|)} \Delta{x}, \quad \mathrm{CFL} = 0.1,
\end{equation}
 and we compute the $L_2$ error $ \|e\|_{L^2}$ at the final time $T = 0.4$. The convergence plots are depicted in Figures \ref{convplot1}-\ref{convplot2}, and the convergence rates are illustrated in Tables \ref{convtable}-\ref{convtable2}.  It can be seen that the proposed method is $P$th order accurate when $P$ is odd and $(P+1)$th order accurate when $P$ is even for the periodic boundary condition case. Similar convergence rates are observed for the IBVP model, except in the supersonic regime. In this regime, the IBVP model convergence rates are $P$ and $P-1$ for even and odd $P$, respectively.

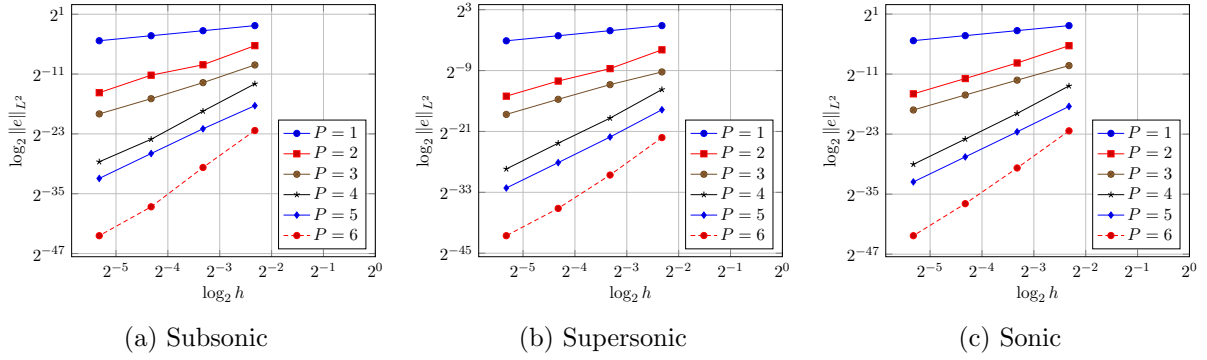
\begin{figure}[H]
     \centering
     \begin{subfigure}[b]{0.32\textwidth}
         \centering
\resizebox{\textwidth}{!}{
\begin{tikzpicture}
\begin{loglogaxis}[
xlabel={$\log_2 h$},
ylabel={$\log_2 \|e\|_{L^2}$},
  log basis y = {2},
  log basis x = {2},
  legend entries = {$P = 1$, $P = 2$, $P = 3$, $P = 4$, $P  = 5$, $P = 6$},
  legend pos=south east,
  grid=major,
  xmax = 1,
]
\addplot table[x index=0,y index=1,col sep=comma] {data1.csv};
\addplot table[x index=0,y index=2,col sep=comma] {data1.csv};
\addplot table[x index=0,y index=3,col sep=comma] {data1.csv};
\addplot table[x index=0,y index=4,col sep=comma] {data1.csv};
\addplot table[x index=0,y index=5,col sep=comma] {data1.csv};
\addplot table[x index=0,y index=6,col sep=comma] {data1.csv};
\end{loglogaxis}
\end{tikzpicture}
}
\caption{Subsonic}
     \end{subfigure}
     \begin{subfigure}[b]{0.32\textwidth}
         \centering
\resizebox{\textwidth}{!}{
\begin{tikzpicture}
\begin{loglogaxis}[
xlabel={$\log_2 h$},
ylabel={$\log_2 \|e\|_{L^2}$},
  log basis y = {2},
  log basis x = {2},
  legend entries = {$P = 1$, $P = 2$, $P = 3$, $P = 4$, $P  = 5$, $P = 6$},
  legend pos=south east,
  grid=major,
  xmax = 1,
]
\addplot table[x index=0,y index=1,col sep=comma] {data2.csv};
\addplot table[x index=0,y index=2,col sep=comma] {data2.csv};
\addplot table[x index=0,y index=3,col sep=comma] {data2.csv};
\addplot table[x index=0,y index=4,col sep=comma] {data2.csv};
\addplot table[x index=0,y index=5,col sep=comma] {data2.csv};
\addplot table[x index=0,y index=6,col sep=comma] {data2.csv};
\end{loglogaxis}
\end{tikzpicture}
}
\caption{Supersonic}
     \end{subfigure}
     \begin{subfigure}[b]{0.32\textwidth}
         \centering
\resizebox{\textwidth}{!}{
\begin{tikzpicture}
\begin{loglogaxis}[
xlabel={$\log_2 h$},
ylabel={$\log_2 \|e\|_{L^2}$},
  log basis y = {2},
  log basis x = {2},
  legend entries = {$P = 1$, $P = 2$, $P = 3$, $P = 4$, $P  = 5$, $P = 6$},
  legend pos=south east,
  grid=major,
  xmax = 1,
]
\addplot table[x index=0,y index=1,col sep=comma] {data3.csv};
\addplot table[x index=0,y index=2,col sep=comma] {data3.csv};
\addplot table[x index=0,y index=3,col sep=comma] {data3.csv};
\addplot table[x index=0,y index=4,col sep=comma] {data3.csv};
\addplot table[x index=0,y index=5,col sep=comma] {data3.csv};
\addplot table[x index=0,y index=6,col sep=comma] {data3.csv};
\end{loglogaxis}
\end{tikzpicture}
}
\caption{Sonic}
     \end{subfigure}
        \caption{Convergence plots for periodic boundary conditions}
        \label{convplot1}
\end{figure}

\begin{figure}[H]
     \centering
     \begin{subfigure}[b]{0.32\textwidth}
         \centering
\resizebox{\textwidth}{!}{
\begin{tikzpicture}
\begin{loglogaxis}[
xlabel={$\log_2 h$},
ylabel={$\log_2 \|e\|_{L^2}$},
  log basis y = {2},
  log basis x = {2},
  legend entries = {$P = 1$, $P = 2$, $P = 3$, $P = 4$, $P  = 5$, $P = 6$},
  legend pos=south east,
  grid=major,
  xmax = 1,
]
\addplot table[x index=0,y index=1,col sep=comma, row sep=newline] {data4.csv};
\addplot table[x index=0,y index=2,col sep=comma, row sep=newline] {data4.csv};
\addplot table[x index=0,y index=3,col sep=comma, row sep=newline] {data4.csv};
\addplot table[x index=0,y index=4,col sep=comma, row sep=newline] {data4.csv};
\addplot table[x index=0,y index=5,col sep=comma, row sep=newline] {data4.csv};
\addplot table[x index=0,y index=6,col sep=comma, row sep=newline] {data4.csv};
\end{loglogaxis}
\end{tikzpicture}
}
\caption{Subsonic}
     \end{subfigure}
     \begin{subfigure}[b]{0.32\textwidth}
         \centering
\resizebox{\textwidth}{!}{
\begin{tikzpicture}
\begin{loglogaxis}[
xlabel={$\log_2 h$},
ylabel={$\log_2 \|e\|_{L^2}$},
  log basis y = {2},
  log basis x = {2},
  legend entries = {$P = 1$, $P = 2$, $P = 3$, $P = 4$, $P  = 5$, $P = 6$},
  legend pos=south east,
  grid=major,
  xmax = 1,
]
\addplot table[x index=0,y index=1,col sep=comma, row sep=newline] {data5.csv};
\addplot table[x index=0,y index=2,col sep=comma, row sep=newline] {data5.csv};
\addplot table[x index=0,y index=3,col sep=comma, row sep=newline] {data5.csv};
\addplot table[x index=0,y index=4,col sep=comma, row sep=newline] {data5.csv};
\addplot table[x index=0,y index=5,col sep=comma, row sep=newline] {data5.csv};
\addplot table[x index=0,y index=6,col sep=comma, row sep=newline] {data5.csv};
\end{loglogaxis}
\end{tikzpicture}
}
\caption{Supersonic}
     \end{subfigure}
     \begin{subfigure}[b]{0.32\textwidth}
         \centering
\resizebox{\textwidth}{!}{
\begin{tikzpicture}
\begin{loglogaxis}[
xlabel={$\log_2 h$},
ylabel={$\log_2 \|e\|_{L^2}$},
  log basis y = {2},
  log basis x = {2},
  legend entries = {$P = 1$, $P = 2$, $P = 3$, $P = 4$, $P  = 5$, $P = 6$},
  legend pos=south east,
  grid=major,
  xmax = 1,
]
\addplot table[x index=0,y index=1,col sep=comma, row sep=newline] {data6.csv};
\addplot table[x index=0,y index=2,col sep=comma, row sep=newline] {data6.csv};
\addplot table[x index=0,y index=3,col sep=comma, row sep=newline] {data6.csv};
\addplot table[x index=0,y index=4,col sep=comma, row sep=newline] {data6.csv};
\addplot table[x index=0,y index=5,col sep=comma, row sep=newline] {data6.csv};
\addplot table[x index=0,y index=6,col sep=comma, row sep=newline] {data6.csv};
\end{loglogaxis}
\end{tikzpicture}
}
\caption{Sonic}
     \end{subfigure}
        \caption{Convergence plots for the initial boundary value problem}
        \label{convplot2}
\end{figure}

  \begin{table}[H]
    \begin{subtable}[h]{0.45\textwidth}
        \centering
       \begin{tabular}{|c|c|c|c|}
\hline
$P$ & Subsonic & Supersonic & Sonic \\ \hline
1   & 1.0094         & 0.9981          & 1.0020          \\ \hline
2   & 3.0389         & 2.9842          & 3.1983          \\ \hline
3   & 3.2662         & 2.7999          & 2.9590          \\ \hline
4   & 5.2559         & 5.1906          & 5.2144          \\ \hline
5   & 4.8735         & 5.1331          & 5.0301          \\ \hline
6   & 7.1174         & 6.4600           & 7.0015          \\ \hline
\end{tabular}
\caption{Periodic boundary condition} \label{convtable}
    \end{subtable}
    \hfill
    \begin{subtable}[h]{0.45\textwidth}
        \centering
\begin{tabular}{|c|c|c|c|}
\hline
$P$ & Subsonic & Supersonic & Sonic \\ \hline
1   & 0.9554         & 0.4510          & 1.0139          \\ \hline
2   & 3.0587         & 2.2361          & 3.1827          \\ \hline
3   & 2.9950         & 1.9813          & 3.0007          \\ \hline
4   & 5.2353         & 4.0003          & 5.1496          \\ \hline
5   & 4.8206         & 3.9429          & 5.0130          \\ \hline
6   & 6.9049         & 5.9784           & 7.0642          \\ \hline
\end{tabular}
\caption{Initial boundary value problem} \label{convtable2}
     \end{subtable}
     \caption{Convergence rates for the 1D model problem}
     \label{convtable1d}
\end{table}

\subsection{Numerical examples in two space dimensions}
We consider the 2D shifted wave equation 
\begin{align}\label{eq:2deqn}
\frac{\partial}{\partial t}\left(\frac{\partial u}{\partial t}- \mathbf{a}\cdot\grad u\right)-\div\left(\mathbf{a}\left(\frac{\partial u}{\partial t}- \mathbf{a}\cdot\grad u\right) + {b}\grad u\right) = f, \quad (x, y) \in \Omega = [0, L_x]\times [0, L_y], \quad t \ge 0,
\end{align}
with
$$
\mathbf{a} = \left(a_x, a_y\right)^T, \quad 
{b} = c_p^2 >0,
$$
where $a_x, a_y \in \mathbb{R}$ are the components of the background velocity and $c_p$ parametrises the speed of sound. 
We set the smooth initial condition
\begin{align}\label{eq:2deqn_init_condition}
 \frac{\partial }{\partial t}u(x, y,0) = v_0(x, y), \quad u(x, y, 0) = u_0(x,y).
\end{align}
 The boundary conditions will be determined by the background velocity $\mathbf{a}$ and the sound speed $c_p$. We will consider specifically a medium with the background velocity $\mathbf{a}$ parallel to the vertical axis,  that is $\mathbf{a}=\left(0, a_y\right)^T$.  As before the boundary condition will be determined by $c= b- a_y^2$.  As above we will consider the supersonic $c < 0$ and subsonic $c > 0$ regimes separately.
\paragraph{Supersonic regime}
We consider the parameters
\begin{align}
a_x = 0, \quad a_y = -1, \quad  b = 0.25,
\end{align}
thus having $c =b-a_y^2 = -0.75 <0$.
In the $x$-direction we set hard wall boundary conditions, that is the normal derivatives vanish  at the boundaries $x = 0, L_x$.
Note that $y = 0$ is an inflow boundary and $y = L_y$ is an outflow boundary. And since $c < 0$, there are two boundary conditions at $y =0$ and  no boundary conditions at $y = L_y$.
We summarise the boundary conditions below
\begin{align}\label{eq:BC_case2_2D_supersonic}
&\frac{\partial u}{\partial x} = g_{x_1}(y,t), \quad x = 0, \qquad  \frac{\partial u}{\partial x} =  g_{x_2}(y,t), \quad x = L_x,\\
&u = f_{y_1}(x,t), \quad \frac{\partial u}{\partial t} - a_y\frac{\partial u}{\partial y} = g_{y_1}(x,t), \quad y = 0.
\end{align}
\begin{figure}[h!]
{\includegraphics[width=0.23\textwidth,angle=90]{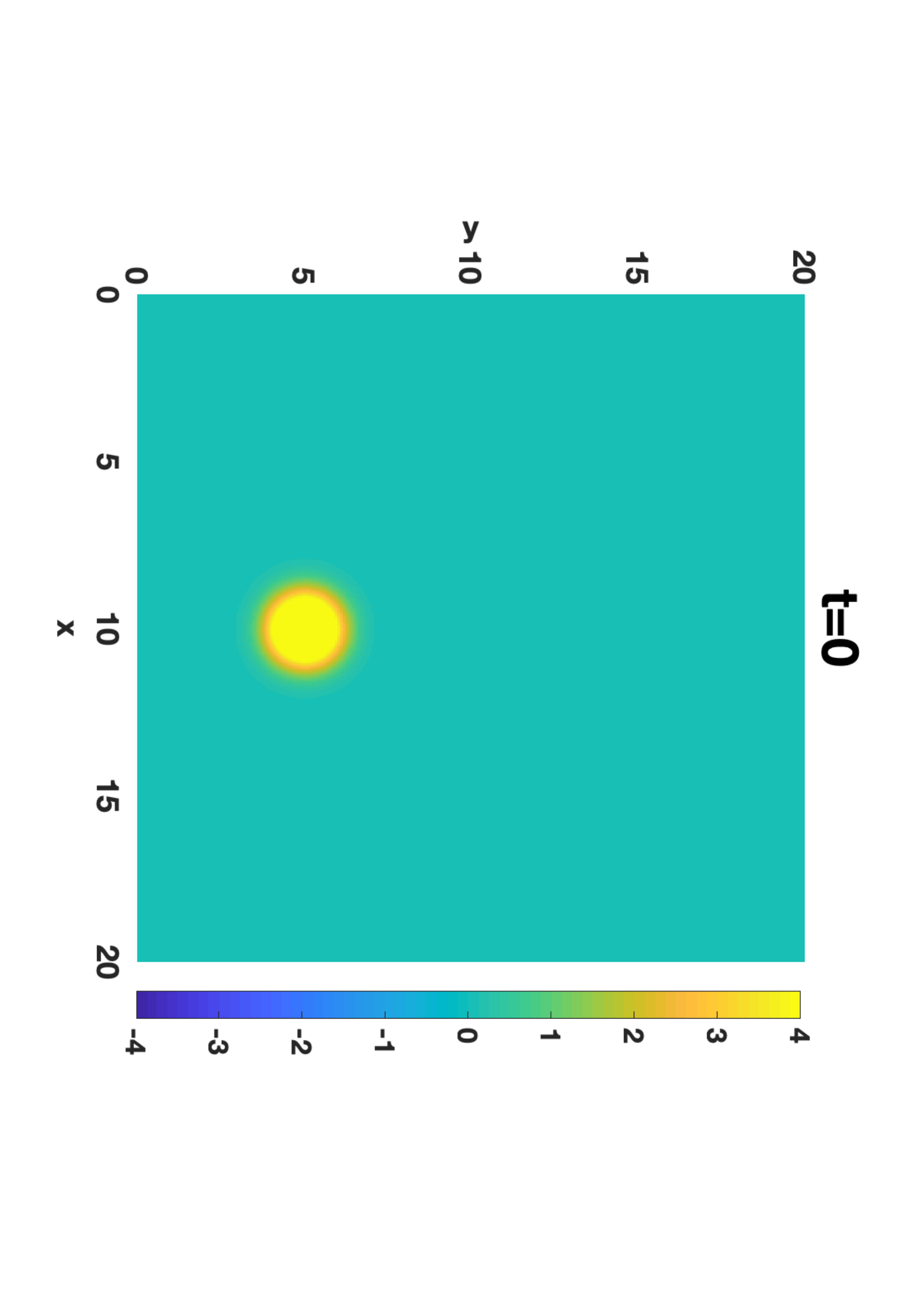}}
{\includegraphics[width=0.23\textwidth,angle=90]{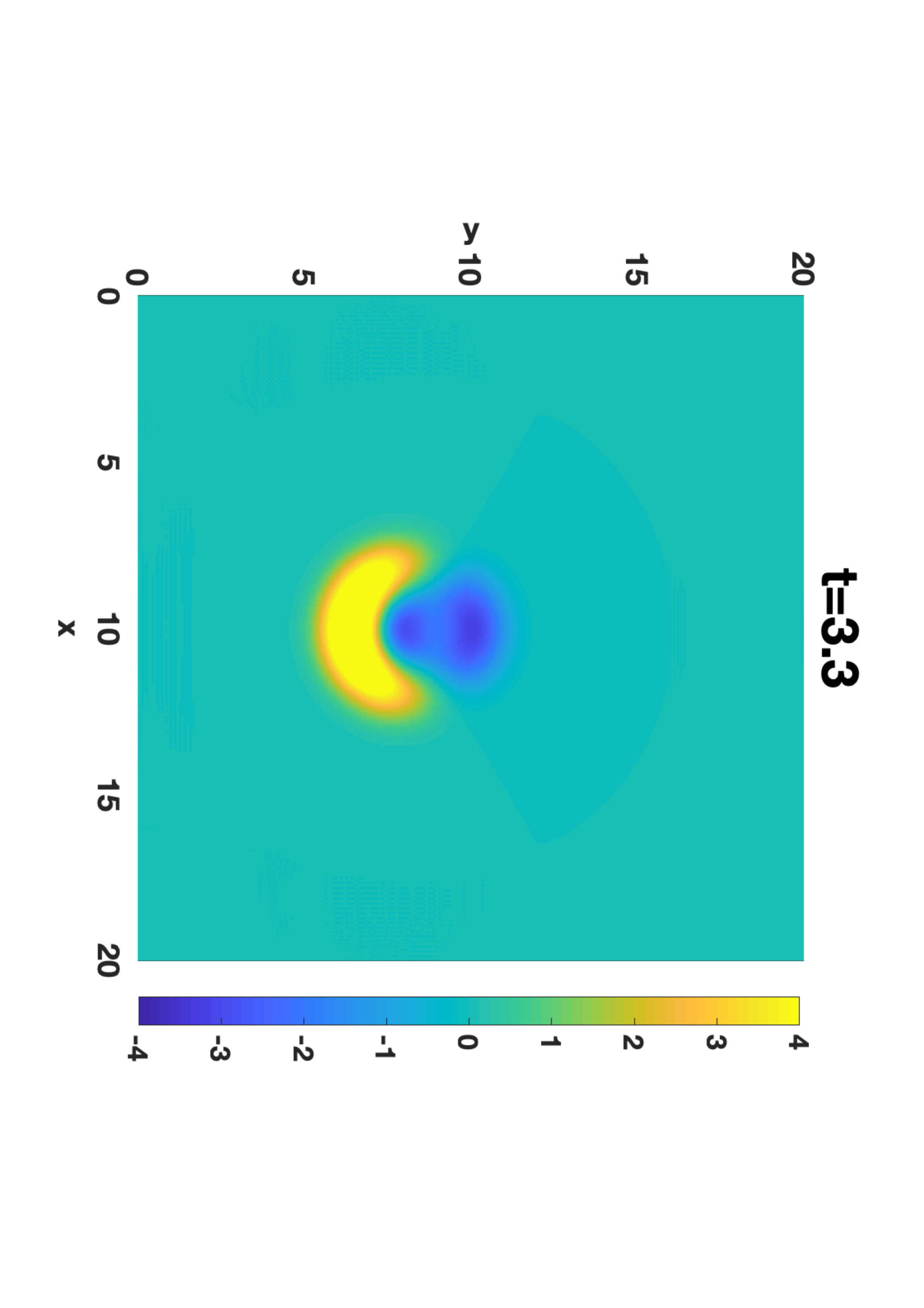}}
{\includegraphics[width=0.23\textwidth,angle=90]{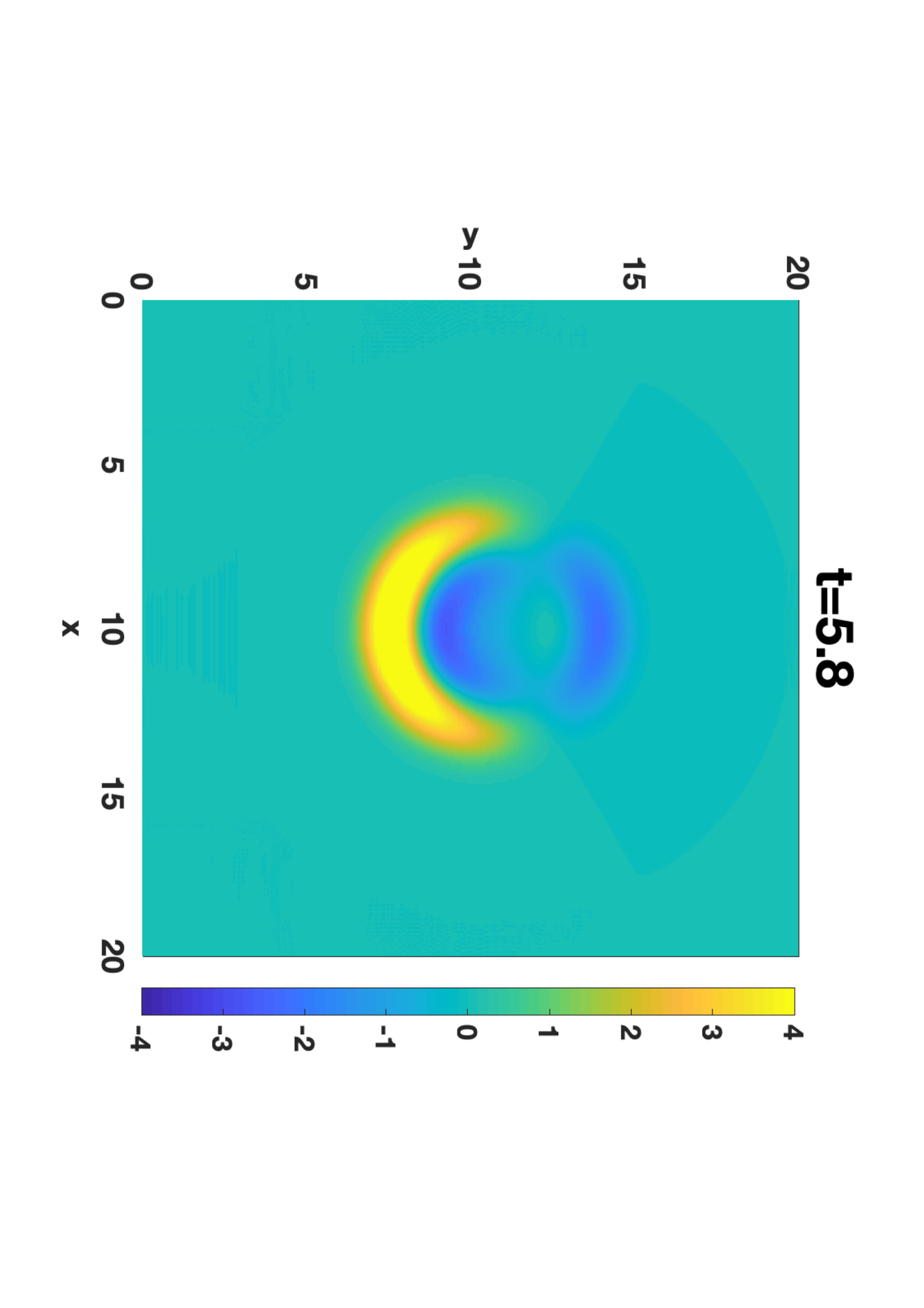}}
{\includegraphics[width=0.23\textwidth,angle=90]{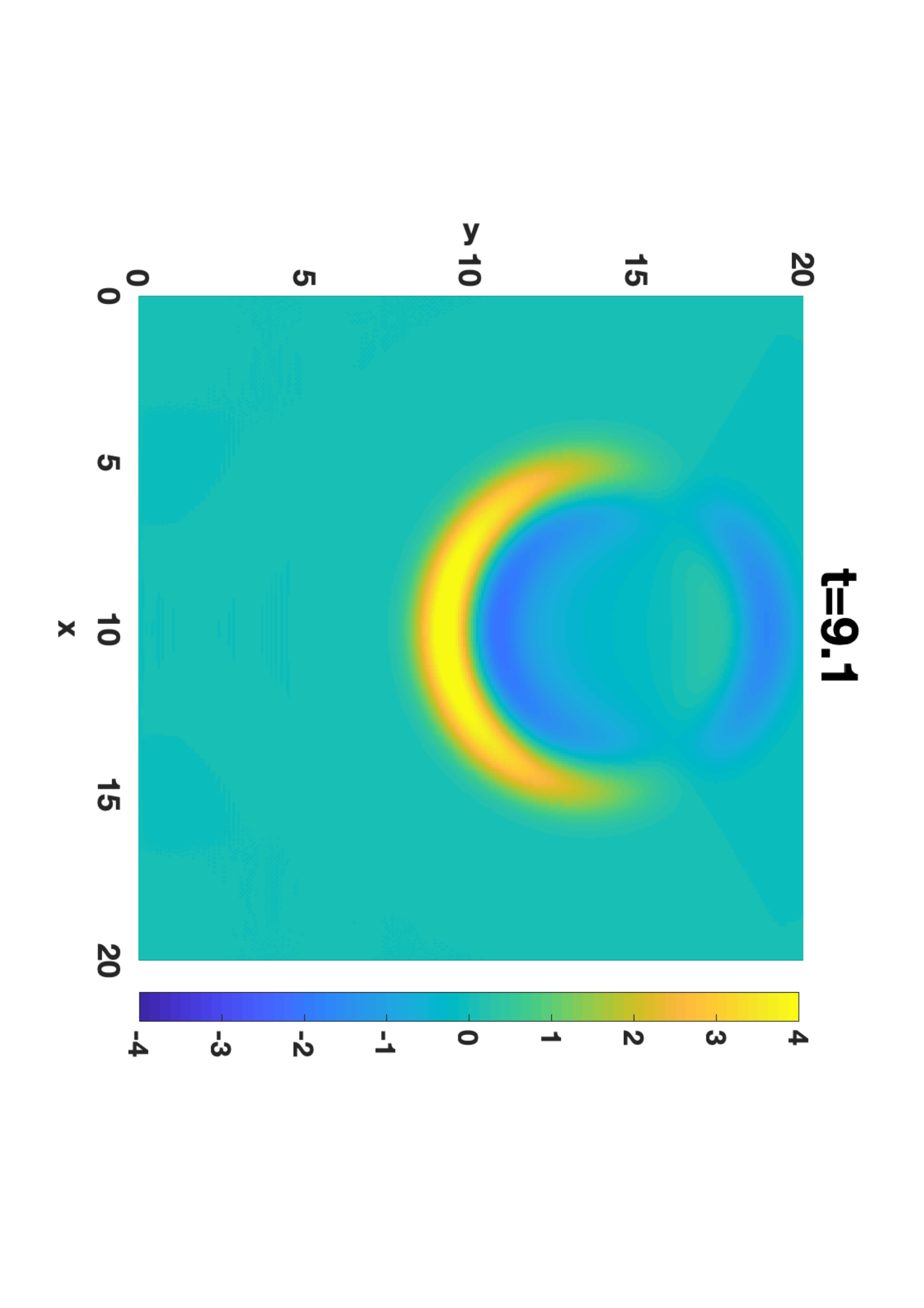}}
{\includegraphics[width=0.23\textwidth,angle=90]{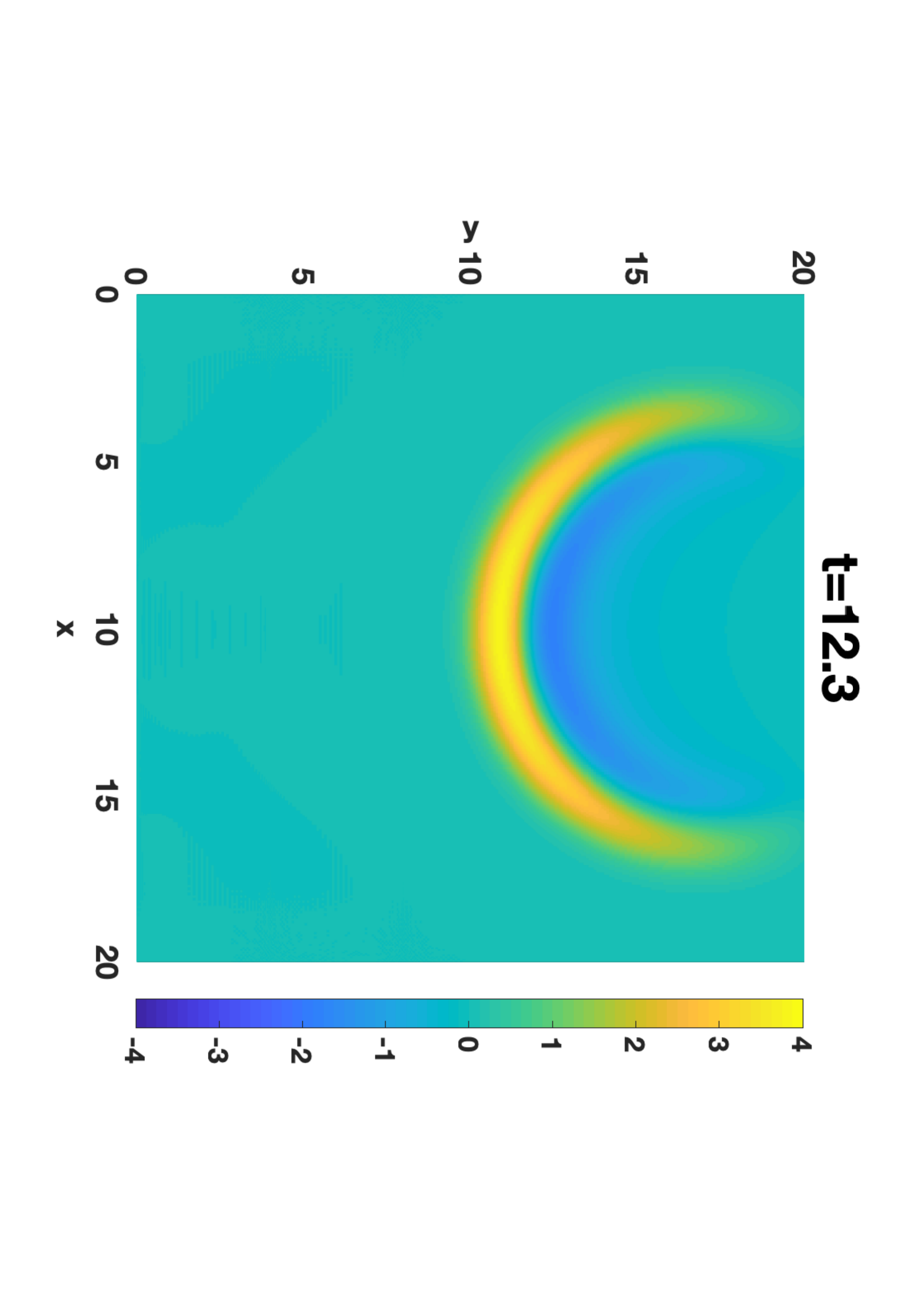}}
{\includegraphics[width=0.23\textwidth,angle=90]{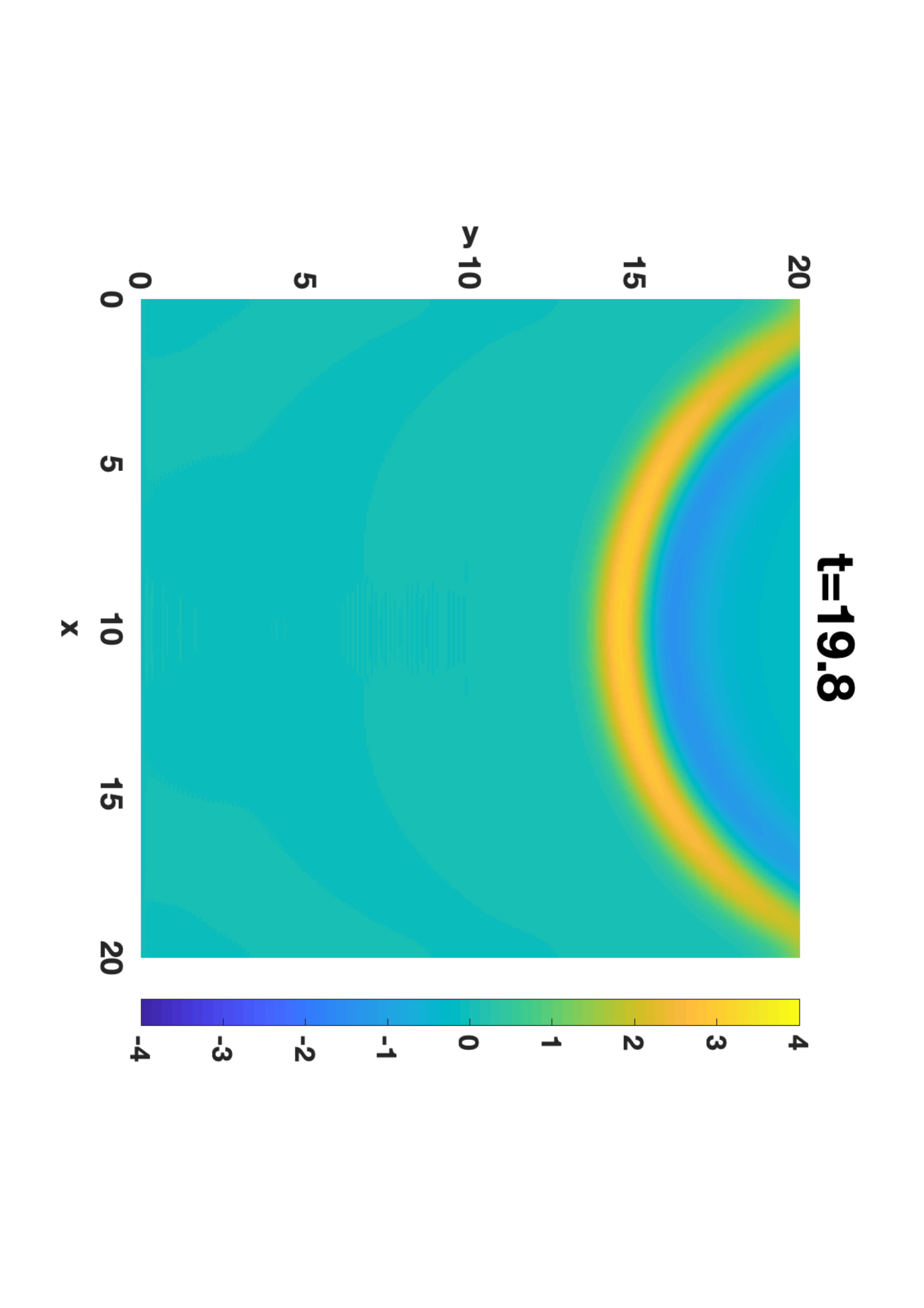}}
    \caption{Snapshots of the wave field $u(x,y,t)$ for a supersonic regime for $a_x = 0, a_y = -1, b = 0.25$, $c = -0.75 <0$.}
    \label{fig:supersonic_u}
\end{figure}
\paragraph{Subsonic regime}
For the subsonic regime, we consider the parameters
\begin{align}
a_x = 0, \quad a_y = -0.5, \quad  b = 1,
\end{align}
thus having $c =b-a_y^2 = 0.75 >0$.
As before, in the $x$-direction we set hard wall boundary conditions, that is the normal derivatives vanish  at the boundaries $x = 0, L_x$.
Note again that $y = 0$ is an inflow boundary and $y = L_y$ is an outflow boundary. However, since $c > 0$, there is one boundary condition at $y =0$ and  one boundary condition at $y = L_y$. We summarise the boundary conditions for the subsonic regime below
\begin{align}\label{eq:BC_case2_2D_subsonic}
&\frac{\partial u}{\partial x} = g_{x_1}(y,t), \quad x = 0, \qquad  \frac{\partial u}{\partial x} =  g_{x_2}(y,t), \quad x = L_x,\\
&\frac{\partial u}{\partial t} - \lambda_1\frac{\partial u}{\partial y}  = g_{y_1}(x,t), \quad y = 0, \qquad \frac{\partial u}{\partial t} - \lambda_2\frac{\partial u}{\partial y}  = g_{y_2}(x,t), \quad y = L_y,
\end{align}
where $\lambda_1 = a_y + \sqrt{b}$ and $\lambda_2 = a_y - \sqrt{b}$.

We discretise the domain with tensor product elements of width $\Delta{x} = L_x/M_x$ and $\Delta{y} = L_y/M_y$, where $M_x$, $M_y$ are number of elements in $x$- and $y$-coordinates, and approximate the solutions  using degree $P$ Lagrange polynomials. The time-step is determined by \eqref{timestepeq}.

To begin, we consider a medium that is initially at rest with the initial Gaussian profile centred at $(x_0, y_0)=(10, 5)$, that is
\begin{align}\label{eq:2deqn_init_condition_2}
v_0(x,y) =0, \quad u_0(x, y) = 10 e^{-\left((x-x_0)^2 + (y-y_0)^2\right)},
\end{align}
and we set homogeneous boundary data in \eqref{eq:BC_case2_2D_supersonic} and \eqref{eq:BC_case2_2D_subsonic}. We discretise the domain uniformly into 4 elements, with 2 elements in each spatial coordinate, $M_x = M_y = 2$, and consider a tensor product of degree $P = 80$ polynomial approximation. The final time is $T = 20$.
The snapshots of the solutions are shown in Figure \ref{fig:supersonic_u} for the supersonic regime and in Figure \ref{fig:subsonic_u} for the subsonic regime.
\begin{figure}[h!]
{\includegraphics[width=0.23\textwidth,angle=90]{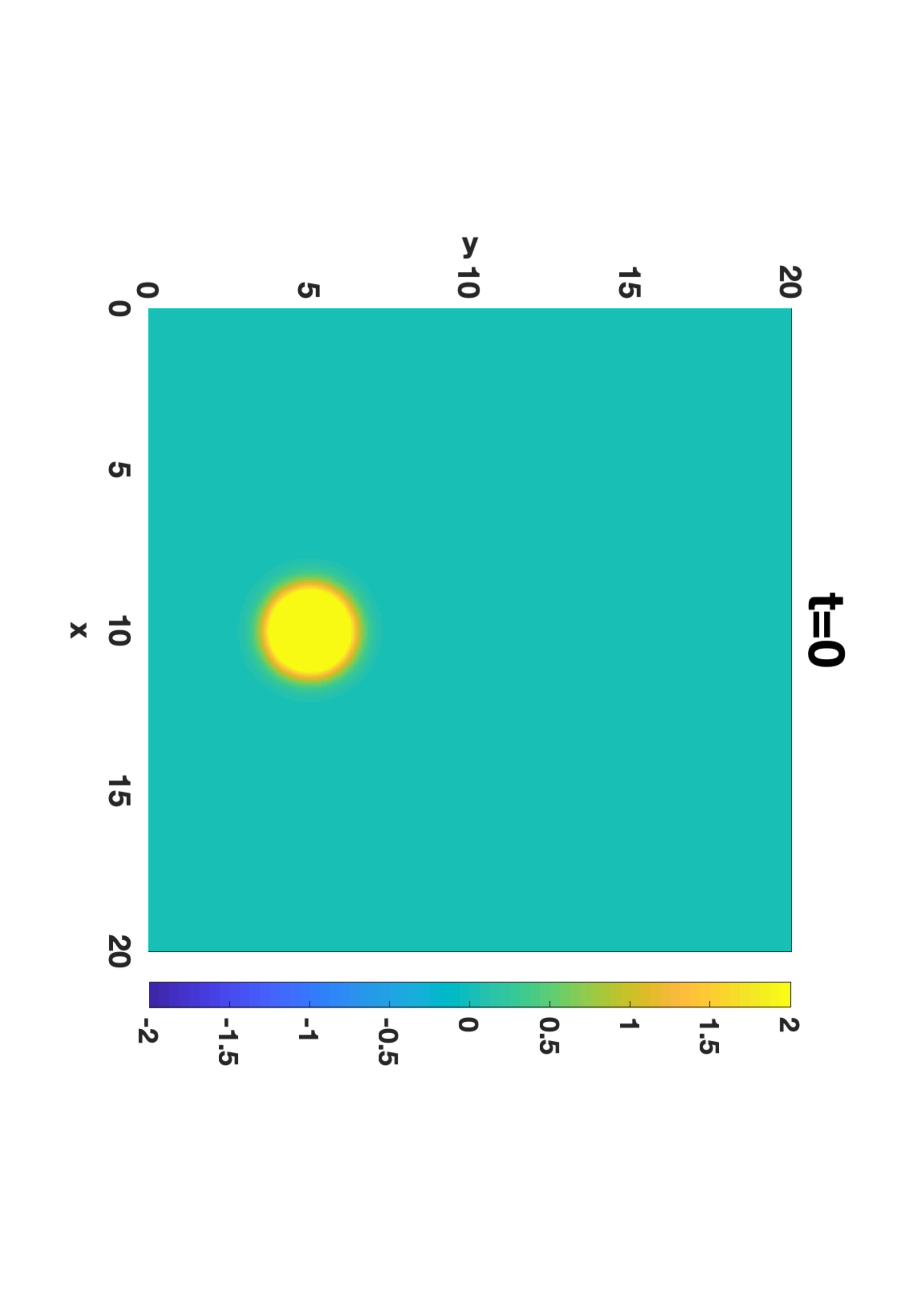}}
{\includegraphics[width=0.23\textwidth,angle=90]{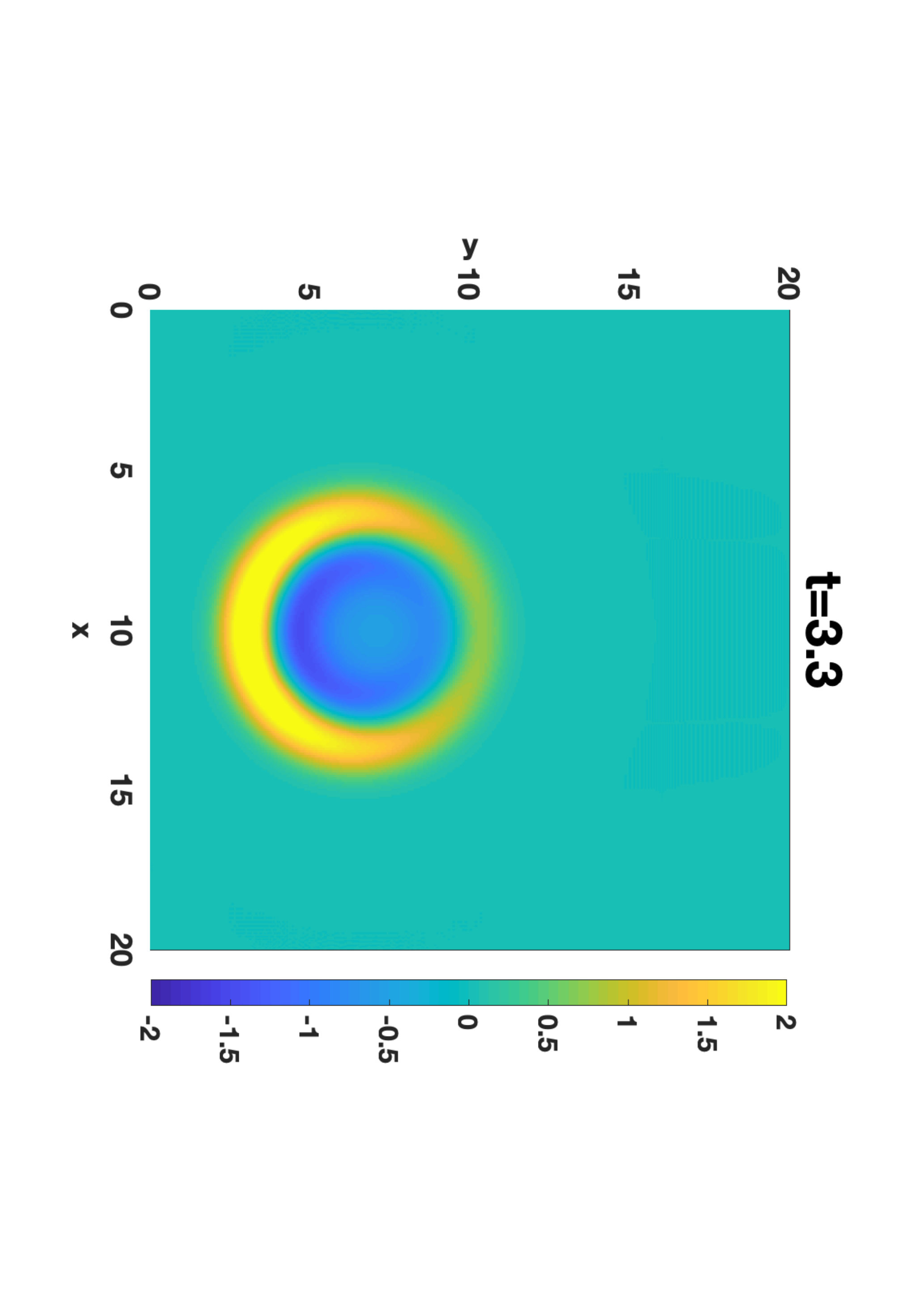}}
{\includegraphics[width=0.23\textwidth,angle=90]{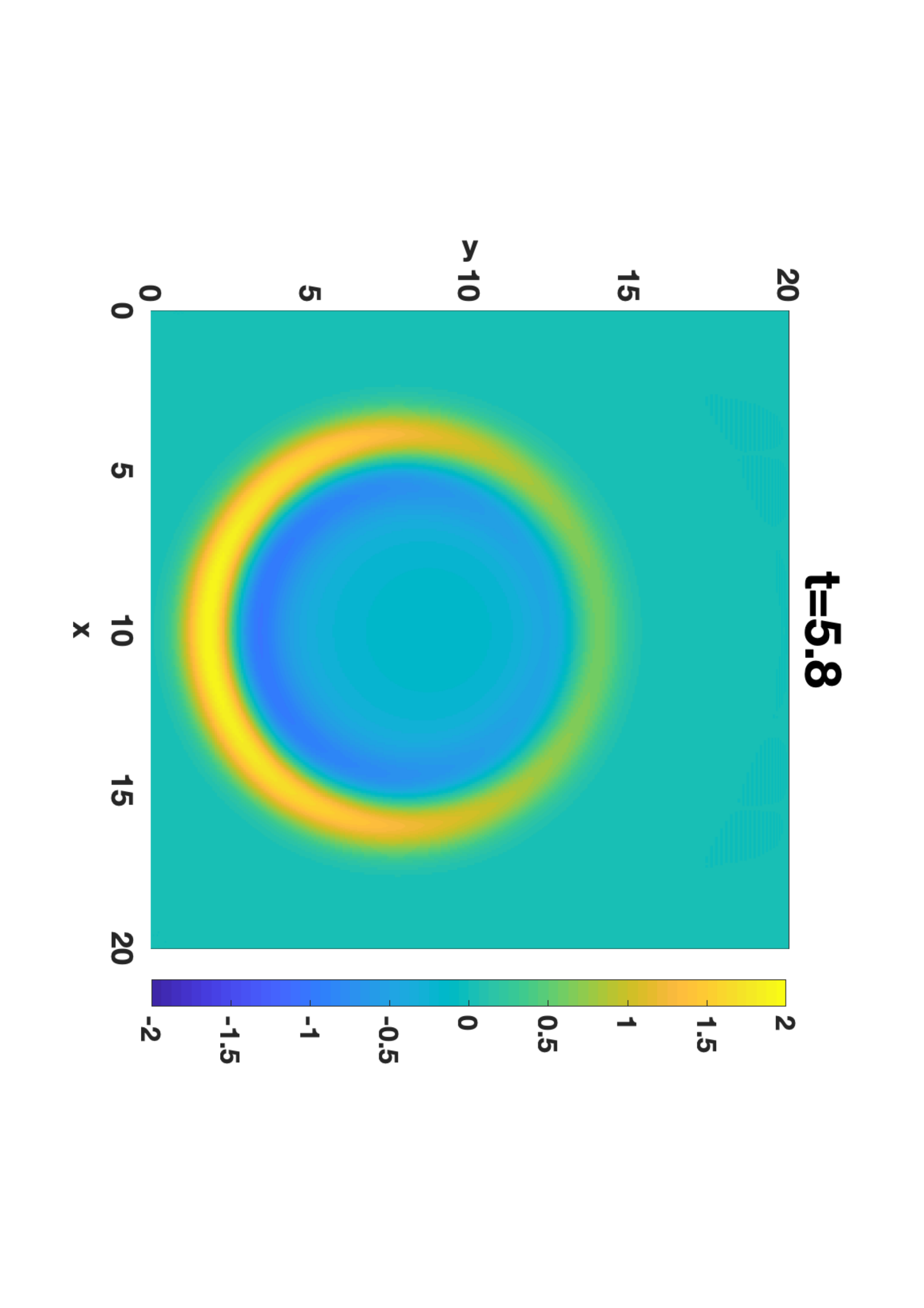}}
{\includegraphics[width=0.23\textwidth,angle=90]{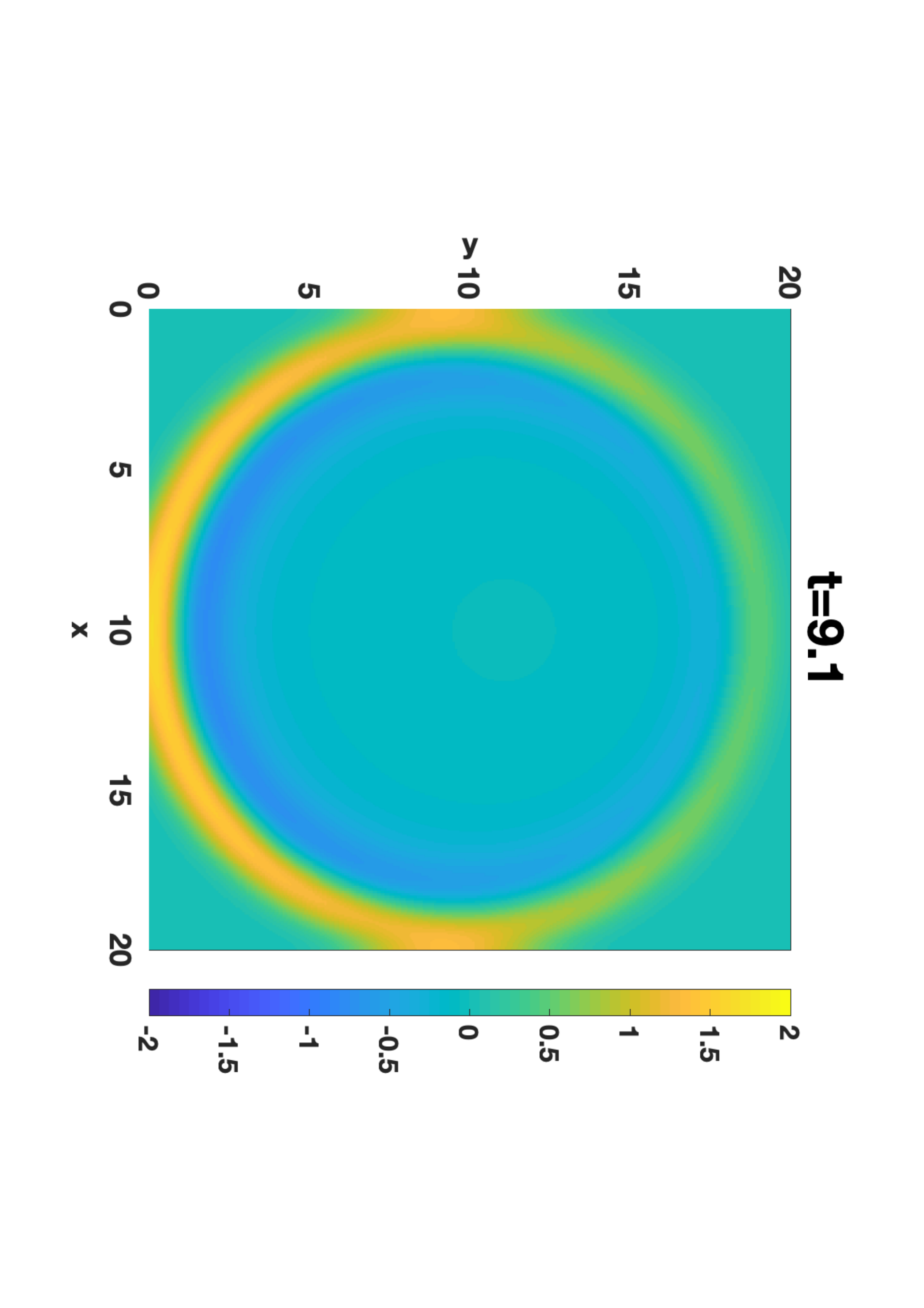}}
{\includegraphics[width=0.23\textwidth,angle=90]{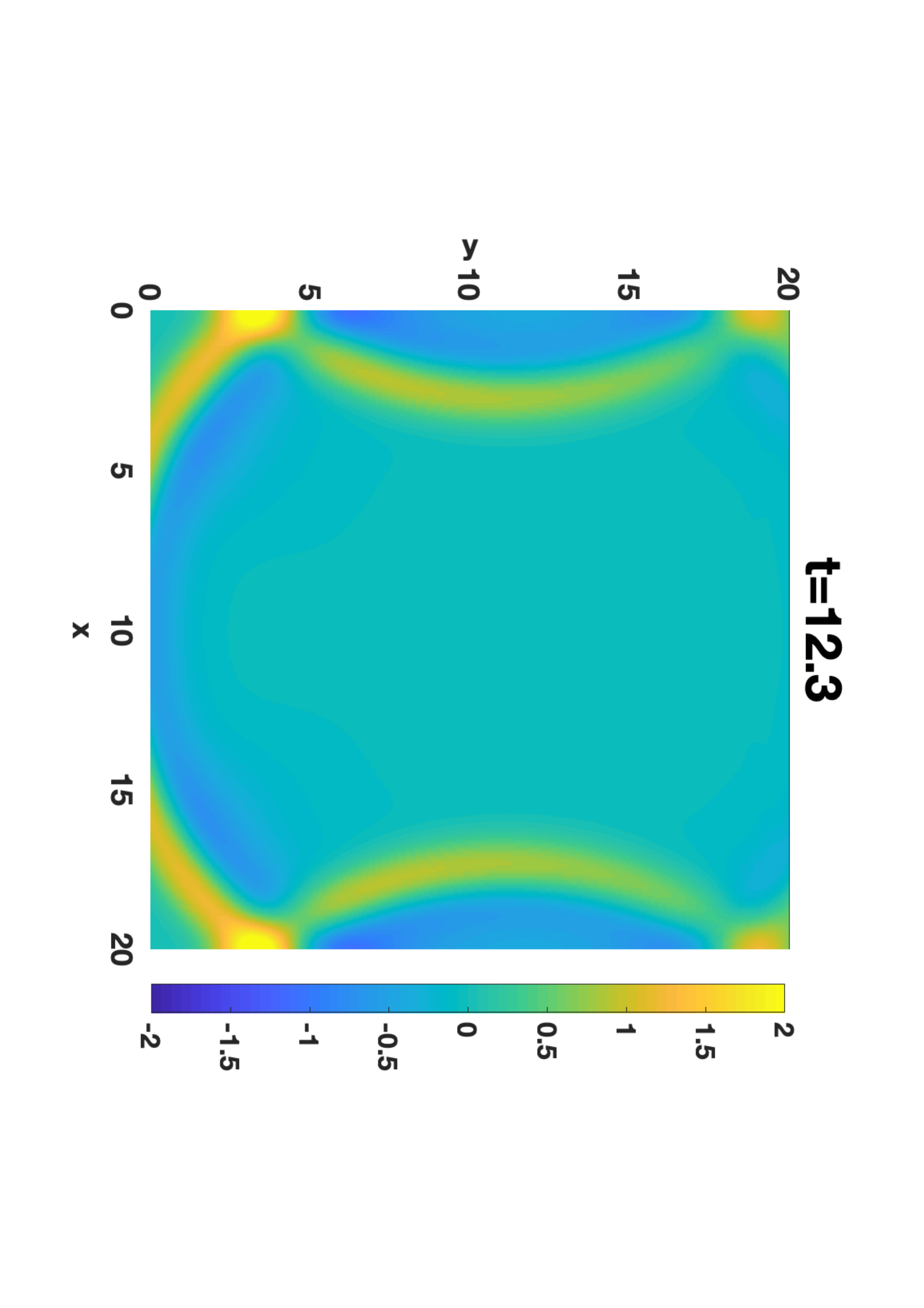}}
{\includegraphics[width=0.23\textwidth,angle=90]{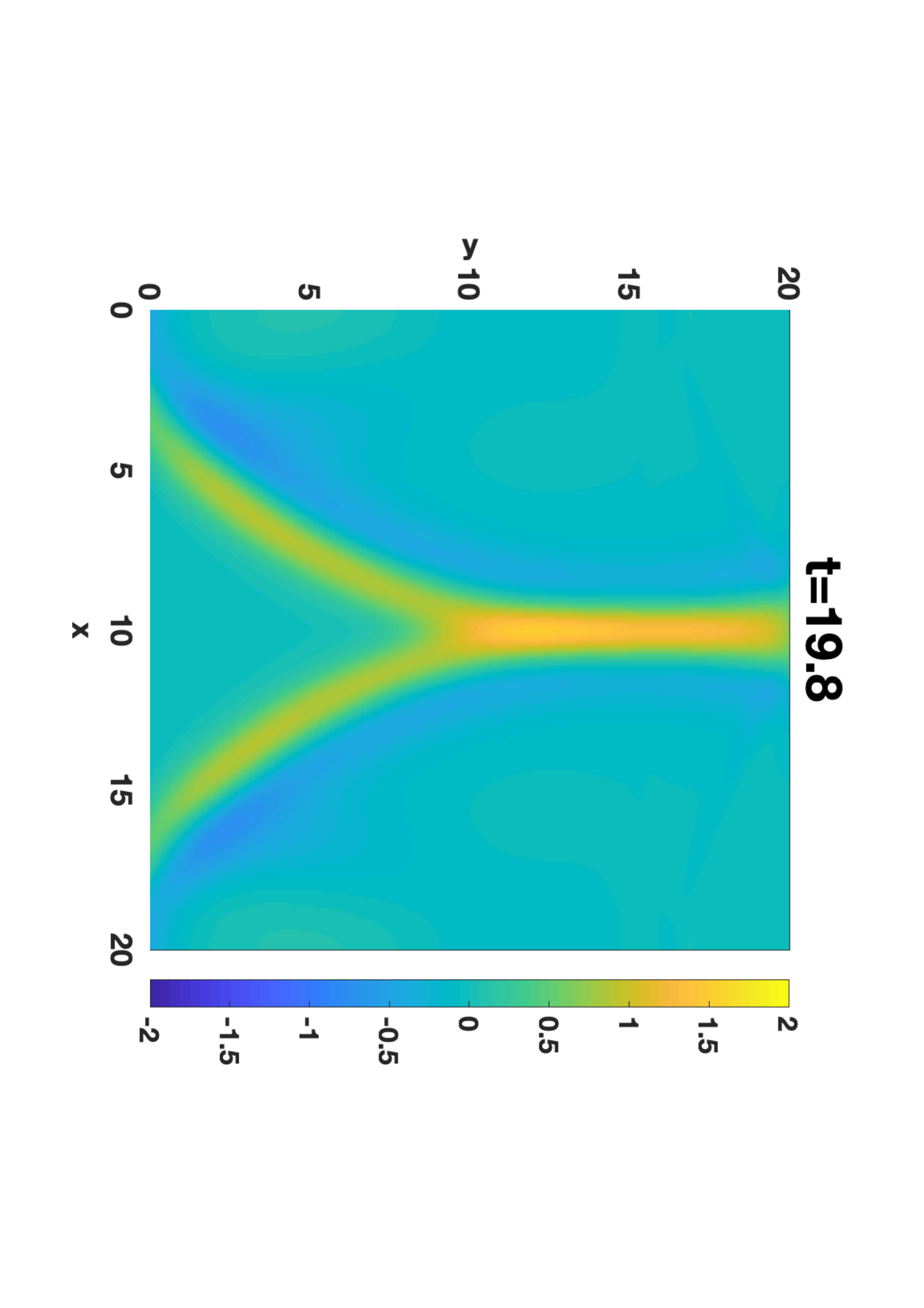}}
    \caption{Snapshots of the wave field $u(x,y,t)$ for a supersonic regime for $a_x = 0, a_y = -0.5, b = 1$, $c = 0.75 >0$.}
    \label{fig:subsonic_u}
\end{figure}
\paragraph{Accuracy and convergence}
We will now investigate numerical accuracy and convergence for the 2D model problem. We will consider the supersonic,  subsonic  and sonic flow regimes separately.
In the sonic regime we have $a_x = 0, a_y = -0.5, b = 0.25$, so that $c = 0$.
As  in Sec. 3.1 in \cite{Zhang2019}, we consider the unit square $(x,y) \in \Omega = [0,1]\times[0,1]$ and the smooth exact solution,
\begin{align}\label{eq:2deqn_exact_solution}
u(x,y,t) =  \sin(2 \sqrt{b}\pi t)\Bigl( 
\sin\bigl( 
2\pi (x -a_xt)
\bigr) 
+ \sin
\bigl( 
2\pi (y -a_yt)
\bigr) \Bigr) 
, \quad t \geq  0.
\end{align}
Note that the exact solution \eqref{eq:2deqn_exact_solution} is periodic in $(x,y) \in [0,1]\times[0,1]$, that is 
$u(0,y,t)~=~u(1,y,t)$ and $u(x,0,t)~=~u(x,1,t)$.
The problem can be reformulated as an IBVP \eqref{eq:2deqn} with \eqref{eq:BC_case2_2D_supersonic} or \eqref{eq:BC_case2_2D_subsonic} where \eqref{eq:2deqn_exact_solution} satisfy the inhomogeneous boundary data. As in the 1D case above, we will consider the periodic boundary condition and the IBVP separately.

We discretise the unit square using a sequence of uniform meshes $\Delta{x} = \Delta{y} = 1/M$, for $M = 5, 10, 20, 40, 80$, and consider degree $P = 1, 2, 3, 4, 5, 6$ polynomial approximations. We evolve the solution until the final time $T = 0.4$, and compute the $L_2$ error.
The errors are plotted in Figure \ref{convplot2d_periodic_bc},  for the periodic boundary conditions, and  in Figure \ref{convplot2d_IBVP}  for the IBVP. The convergence rates are displayed in Table \ref{convtable_2D_periodic}, for the periodic boundary conditions, and in Table \ref{convtable_2D_IBVP}, for the IBVP. Note that the errors converge to zero in all settings. For the periodic boundary conditions the convergence is $P+1$, for degree $P$ polynomial approximation in all flow regimes,  subsonic,  sonic and supersonic regimes. For the IBVP model convergence rate is $P$ in the subsonic and sonic regimes, and $P-1$ in the supersonic regime. These are in good agreement with the 1D results obtained above, in the last subsection.
 
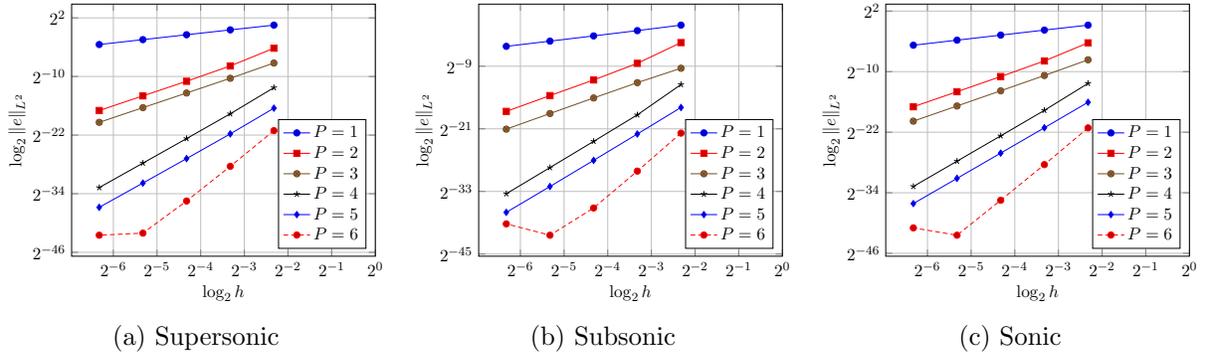
\begin{figure}[H]
     \centering
     \begin{subfigure}[b]{0.32\textwidth}
         \centering
\resizebox{\textwidth}{!}{
\begin{tikzpicture}
\begin{loglogaxis}[
xlabel={$\log_2 h$},
ylabel={$\log_2 \|e\|_{L^2}$},
  log basis y = {2},
  log basis x = {2},
  legend entries = {$P = 1$, $P = 2$, $P = 3$, $P = 4$, $P  = 5$, $P = 6$},
  legend pos=south east,
  grid=major,
  xmax = 1,
]
\addplot table[x index=0,y index=1,col sep=comma, row sep=newline] {supersonic_periodic_2d.csv};
\addplot table[x index=0,y index=2,col sep=comma, row sep=newline] {supersonic_periodic_2d.csv};
\addplot table[x index=0,y index=3,col sep=comma, row sep=newline] {supersonic_periodic_2d.csv};
\addplot table[x index=0,y index=4,col sep=comma, row sep=newline] {supersonic_periodic_2d.csv};
\addplot table[x index=0,y index=5,col sep=comma, row sep=newline] {supersonic_periodic_2d.csv};
\addplot table[x index=0,y index=6,col sep=comma, row sep=newline] {supersonic_periodic_2d.csv};
\end{loglogaxis}
\end{tikzpicture}
}
\caption{Supersonic}
     \end{subfigure}
     \begin{subfigure}[b]{0.32\textwidth}
         \centering
\resizebox{\textwidth}{!}{
\begin{tikzpicture}
\begin{loglogaxis}[
xlabel={$\log_2 h$},
ylabel={$\log_2 \|e\|_{L^2}$},
  log basis y = {2},
  log basis x = {2},
  legend entries = {$P = 1$, $P = 2$, $P = 3$, $P = 4$, $P  = 5$, $P = 6$},
  legend pos=south east,
  grid=major,
  xmax = 1,
]
\addplot table[x index=0,y index=1,col sep=comma, row sep=newline] {subsonic_periodic_2d.csv};
\addplot table[x index=0,y index=2,col sep=comma, row sep=newline] {subsonic_periodic_2d.csv};
\addplot table[x index=0,y index=3,col sep=comma, row sep=newline] {subsonic_periodic_2d.csv};
\addplot table[x index=0,y index=4,col sep=comma, row sep=newline] {subsonic_periodic_2d.csv};
\addplot table[x index=0,y index=5,col sep=comma, row sep=newline] {subsonic_periodic_2d.csv};
\addplot table[x index=0,y index=6,col sep=comma, row sep=newline] {subsonic_periodic_2d.csv};
\end{loglogaxis}
\end{tikzpicture}
}
\caption{Subsonic}
     \end{subfigure}
     \begin{subfigure}[b]{0.32\textwidth}
         \centering
\resizebox{\textwidth}{!}{
\begin{tikzpicture}
\begin{loglogaxis}[
xlabel={$\log_2 h$},
ylabel={$\log_2 \|e\|_{L^2}$},
  log basis y = {2},
  log basis x = {2},
  legend entries = {$P = 1$, $P = 2$, $P = 3$, $P = 4$, $P  = 5$, $P = 6$},
  legend pos=south east,
  grid=major,
  xmax = 1,
]
\addplot table[x index=0,y index=1,col sep=comma, row sep=newline] {sonic_periodic_2d.csv};
\addplot table[x index=0,y index=2,col sep=comma, row sep=newline] {sonic_periodic_2d.csv};
\addplot table[x index=0,y index=3,col sep=comma, row sep=newline] {sonic_periodic_2d.csv};
\addplot table[x index=0,y index=4,col sep=comma, row sep=newline] {sonic_periodic_2d.csv};
\addplot table[x index=0,y index=5,col sep=comma, row sep=newline] {sonic_periodic_2d.csv};
\addplot table[x index=0,y index=6,col sep=comma, row sep=newline] {sonic_periodic_2d.csv};
\end{loglogaxis}
\end{tikzpicture}
}
\caption{Sonic}
     \end{subfigure}
        \caption{Convergence plots for periodic boundary conditions}
        \label{convplot2d_periodic_bc}
\end{figure}

\begin{figure}[H]
     \centering
     \begin{subfigure}[b]{0.32\textwidth}
         \centering
\resizebox{\textwidth}{!}{
\begin{tikzpicture}
\begin{loglogaxis}[
xlabel={$\log_2 h$},
ylabel={$\log_2 \|e\|_{L^2}$},
  log basis y = {2},
  log basis x = {2},
  legend entries = {$P = 1$, $P = 2$, $P = 3$, $P = 4$, $P  = 5$, $P = 6$},
  legend pos=south east,
  grid=major,
  xmax = 1,
]
\addplot table[x index=0,y index=1,col sep=comma, row sep=newline] {supersonic_2d.csv};
\addplot table[x index=0,y index=2,col sep=comma, row sep=newline] {supersonic_2d.csv};
\addplot table[x index=0,y index=3,col sep=comma, row sep=newline] {supersonic_2d.csv};
\addplot table[x index=0,y index=4,col sep=comma, row sep=newline] {supersonic_2d.csv};
\addplot table[x index=0,y index=5,col sep=comma, row sep=newline] {supersonic_2d.csv};
\addplot table[x index=0,y index=6,col sep=comma, row sep=newline] {supersonic_2d.csv};
\end{loglogaxis}
\end{tikzpicture}
}
\caption{Supersonic}
     \end{subfigure}
     \begin{subfigure}[b]{0.32\textwidth}
         \centering
\resizebox{\textwidth}{!}{
\begin{tikzpicture}
\begin{loglogaxis}[
xlabel={$\log_2 h$},
ylabel={$\log_2 \|e\|_{L^2}$},
  log basis y = {2},
  log basis x = {2},
  legend entries = {$P = 1$, $P = 2$, $P = 3$, $P = 4$, $P  = 5$, $P = 6$},
  legend pos=south east,
  grid=major,
  xmax = 1,
]
\addplot table[x index=0,y index=1,col sep=comma, row sep=newline] {subsonic_2d.csv};
\addplot table[x index=0,y index=2,col sep=comma, row sep=newline] {subsonic_2d.csv};
\addplot table[x index=0,y index=3,col sep=comma, row sep=newline] {subsonic_2d.csv};
\addplot table[x index=0,y index=4,col sep=comma, row sep=newline] {subsonic_2d.csv};
\addplot table[x index=0,y index=5,col sep=comma, row sep=newline] {subsonic_2d.csv};
\addplot table[x index=0,y index=6,col sep=comma, row sep=newline] {subsonic_2d.csv};
\end{loglogaxis}
\end{tikzpicture}
}
\caption{Subsonic}
     \end{subfigure}
     \begin{subfigure}[b]{0.32\textwidth}
         \centering
\resizebox{\textwidth}{!}{
\begin{tikzpicture}
\begin{loglogaxis}[
xlabel={$\log_2 h$},
ylabel={$\log_2 \|e\|_{L^2}$},
  log basis y = {2},
  log basis x = {2},
  legend entries = {$P = 1$, $P = 2$, $P = 3$, $P = 4$, $P  = 5$, $P = 6$},
  legend pos=south east,
  grid=major,
  xmax = 1,
]
\addplot table[x index=0,y index=1,col sep=comma, row sep=newline] {sonic_2d.csv};
\addplot table[x index=0,y index=2,col sep=comma, row sep=newline] {sonic_2d.csv};
\addplot table[x index=0,y index=3,col sep=comma, row sep=newline] {sonic_2d.csv};
\addplot table[x index=0,y index=4,col sep=comma, row sep=newline] {sonic_2d.csv};
\addplot table[x index=0,y index=5,col sep=comma, row sep=newline] {sonic_2d.csv};
\addplot table[x index=0,y index=6,col sep=comma, row sep=newline] {sonic_2d.csv};
\end{loglogaxis}
\end{tikzpicture}
}
\caption{Sonic}
     \end{subfigure}
        \caption{Convergence plots for the initial boundary value problem}
        \label{convplot2d_IBVP}
\end{figure}

  \begin{table}[h]
    \begin{subtable}[h]{0.45\textwidth}
        \centering
       \begin{tabular}{|c|c|c|c|}
\hline
$P$ & Supersonic & Subsonic &Sonic \\ \hline
1   & 1.0010         & 1.0003  & 1.0014                  \\ \hline
2   & 2.9952         & 3.0256  & 3.0013                 \\ \hline
3   & 3.0010         & 2.9957  & 3.0018                  \\ \hline
4   & 5.0122         & 5.0289  & 5.0095                   \\ \hline
5   & 5.0114         & 5.0127  & 5.0058                   \\ \hline
6   & 7.0995         & 7.0688  & 7.0652                    \\ \hline
\end{tabular}
\caption{ Periodic boundary conditions}
 \label{convtable_2D_periodic}
    \end{subtable}
    \hfill
    \begin{subtable}[h]{0.45\textwidth}
        \centering
       \begin{tabular}{|c|c|c|c|}
\hline
$P$ & Supersonic & Subsonic  & Sonic  \\ \hline
1   & 0.5591         & 1.0014  & 1.0015   \\ \hline
2   & 2.1443         & 3.0097  & 2.9964   \\ \hline
3   & 2.0551         & 3.0003  & 3.0011   \\ \hline
4   & 4.0168         & 5.0059  & 5.0065   \\ \hline
5   & 3.9935         & 5.0073  & 5.0024    \\ \hline
6   & 6.1579         & 7.0714  & 7.0617    \\ \hline
\end{tabular}
\caption{Initial boundary value problem}
\label{convtable_2D_IBVP}
     \end{subtable}
     \caption{Convergence rates for the 2D model problem}
     \label{tab:temps}
\end{table}

\section{Conclusion}
We have developed and analysed a DSEM for the shifted wave equation in second order form. The discretisation is based on spectral difference operators for the first and second derivatives. The operators satisfy the SBP properties and are ultra-compatible, which is important for the conservation and stability for problems with mixed spatial and temporal derivatives. Similar to the DG methods, a spectral difference operator for a single element can be represented by a full matrix, resulting to a block-diagonal structure for the full discretisation operator. The mass matrix in the proposed method is always diagonal thus avoiding any matrix inversion. To couple adjacent elements, we have constructed numerical fluxes for all well-posed material parameters in the subsonic, sonic and supersonic regimes. In each case, an energy estimate is derived to guarantee stability without adding any artificial dissipation. In addition, a priori error estimates are derived in the energy norm. We have presented numerical experiments in both (1+1)- and (2+1)-dimensions to verify the theoretical analysis of conservation, stability and accuracy. With the classical fourth order Runge-Kutta method as the time integrator, the full discretisation is stable under a time step restriction that is proportional to the element size and inversely proportional to the order of local polynomials. The proposed method combines the advantages and central ideas of SBP FD methods, spectral methods and DG methods. 

For the subsonic and sonic regimes, our analysis is valid for all material parameters for well-posed problems. However, in the supersonic regime, the analysis is limited to problems with constant coefficient because the penalised difference operator \eqref{eq:penalise_differential} is not anti-symmetric in the parameter-weighted discrete inner product. This difficulty can be overcome by the flux splitting technique that is commonly used in hyperbolic conservation laws, and will be considered in our forthcoming work. Furthermore, an improved accuracy analysis is needed to understand the observation that the convergence rate in the $L_2$ norm in the supersonic regime is  one order lower than the corresponding sonic and subsonic cases for problems with non-periodic boundary conditions, but this behaviour is not observed for periodic problems.

The method derived in this paper will find  immediate applications in the simulations of aero-acoustic problems and Einsteins' equations modelling gravitational  waves.
In a forthcoming paper, we will apply the numerical method derived in this paper to Einstein's equations of general relativity, to simulate the so-called black hole excision problem \cite{SzilagyiKreissWinicour2005}.

Finally, in this work we have focused on the derivation of accurate and stable numerical approximation in space and used the classical fourth order accurate Runge-Kutta method for time integration. Another possible direction for future work will be the derivation of efficient high order accurate time-stepping schemes for the second order form to match the accuracy of the spatial approximation.
\bibliography{manuscript}
\bibliographystyle{plain} 
\end{document}